\title{Stable pair invariants on Calabi-Yau 
threefolds 
containing $\mathbb{P}^2$}
\date{}
\author{Yukinobu Toda}
\DeclareFontFamily{U}{rsfs}{%
\skewchar\font127}
\DeclareFontShape{U}{rsfs}{m}{n}{%
<-6>rsfs5<6-8.5>rsfs7<8.5->rsfs10}{}
\DeclareSymbolFont{rsfs}{U}{rsfs}{m}{n}
\DeclareRobustCommand*\rsfs{%
\@fontswitch\relax\mathrsfs}
\theoremstyle{plain}
\newtheorem{thm}{Theorem}[section]
\newtheorem{prop}[thm]{Proposition}
\newtheorem{lem}[thm]{Lemma}
\newtheorem{defi}[thm]{Definition}
\newtheorem{rmk}[thm]{Remark}
\newtheorem{cor}[thm]{Corollary}
\newtheorem{prop-defi}[thm]{Proposition-Definition}
\newtheorem{thm-defi}[thm]{Theorem-Definition}
\newtheorem{lem-defi}[thm]{Lemma-Definition}
\newtheorem{question}[thm]{Question}
\newtheorem{conj}[thm]{Conjecture}
\newtheorem{exam}[thm]{Example}
\newdimen\argwidth
\def\db[#1\db]{
 \setbox0=\hbox{$#1$}\argwidth=\wd0
 \setbox0=\hbox{$\left[\box0\right]$}
  \advance\argwidth by -\wd0
 \left[\kern.3\argwidth\box0 \kern.3\argwidth\right]}
\newcommand{\aA}{\mathcal{A}}
\newcommand{\bB}{\mathcal{B}}
\newcommand{\cC}{\mathcal{C}}
\newcommand{\dD}{\mathcal{D}}
\newcommand{\eE}{\mathcal{E}}
\newcommand{\fF}{\mathcal{F}}
\newcommand{\hH}{\mathcal{H}}
\newcommand{\lL}{\mathcal{L}}
\newcommand{\mM}{\mathcal{M}}
\newcommand{\oO}{\mathcal{O}}
\newcommand{\tT}{\mathcal{T}}
\newcommand{\uU}{\mathcal{U}}
\newcommand{\xX}{\mathcal{X}}
\newcommand{\yY}{\mathcal{Y}}
\newcommand{\Supp}{\mathop{\rm Supp}\nolimits}
\newcommand{\Hom}{\mathop{\rm Hom}\nolimits}
\newcommand{\dR}{\mathbf{R}}
\newcommand{\dL}{\mathbf{L}}
\newcommand{\NS}{\mathop{\rm NS}\nolimits}
\newcommand{\Pic}{\mathop{\rm Pic}\nolimits}
\newcommand{\ch}{\mathop{\rm ch}\nolimits}
\newcommand{\Ext}{\mathop{\rm Ext}\nolimits}
\newcommand{\Spec}{\mathop{\rm Spec}\nolimits}
\newcommand{\rank}{\mathop{\rm rank}\nolimits}
\newcommand{\Coh}{\mathop{\rm Coh}\nolimits}
\newcommand{\cneq}{\mathrel{\raise.095ex\hbox{:}\mkern-4.2mu=}}
\newcommand{\eqcn}{\mathrel{=\mkern-4.5mu\raise.095ex\hbox{:}}}
\newcommand{\Cok}{\mathop{\rm Cok}\nolimits}
\newcommand{\Aut}{\mathop{\rm Aut}\nolimits}
\newcommand{\Stab}{\mathop{\rm Stab}\nolimits}
\newcommand{\DT}{\mathop{\rm DT}\nolimits}
\newcommand{\PT}{\mathop{\rm PT}\nolimits}
\newcommand{\modu}{\mathop{\rm mod}\nolimits}
\newcommand{\End}{\mathop{\rm End}\nolimits}
\newcommand{\Imm}{\mathop{\rm Im}\nolimits}
\newcommand{\GL}{\mathop{\rm GL}\nolimits}
\newcommand{\cl}{\mathop{\rm cl}\nolimits}
\begin{document}
\maketitle

\begin{abstract}
We relate Pandharipande-Thomas 
stable pair invariants 
on Calabi-Yau 3-folds containing the projective plane 
with 
those on the derived equivalent orbifolds
via wall-crossing method. 
The difference is described by 
generalized Donaldson-Thomas invariants counting semistable sheaves
on the local projective plane, 
whose generating series form
theta type series for indefinite lattices. 
Our result also derives 
 non-trivial constraints
among stable pair invariants 
on such Calabi-Yau 3-folds
caused by Seidel-Thomas twist. 
\end{abstract}

\section{Introduction}
\subsection{Motivation}
It is an important subject to 
count algebraic curves on Calabi-Yau 3-folds, 
or more generally on CY3
orbifolds\footnote{In this paper, an orbifold 
means a smooth Deligne-Mumford stack.}, 
in connection with 
string theory. 
So far at least three curve counting theories have 
been proposed
and studied: Gromov-Witten (GW) theory~\cite{BGW}, 
Donaldson-Thomas (DT) theory~\cite{Thom}, \cite{MNOP} 
and Pandharipande-Thomas (PT) stable pair theory~\cite{PT}. 
It was conjectured, and proved in many cases, that these
theories are equivalent: 
the equivalence of DT and PT theories 
was proved in~\cite{BrH}, \cite{Tcurve1}, \cite{StTh}
using Hall algebras,
and the equivalence of GW and PT theories 
was proved by Pandharipande-Pixton~\cite{PP}
for many Calabi-Yau 3-folds including quintic 3-folds
using degenerations and torus 
localizations. 

On the other hand, the derived category of 
coherent sheaves $D^b \Coh(X)$
on a Calabi-Yau 3-fold $X$ 
is also an important mathematical subject, 
due to its role in Kontsevich's
Homological mirror symmetry conjecture~\cite{Kon}. 
It
was suggested by Pandharipande-Thomas~\cite{PT}
that the derived category also plays a crucial role in 
curve counting, 
as their stable pair invariants count
two term complexes
\begin{align*}
(\oO_X \stackrel{s}{\to} F) \in D^b \Coh(X) 
\end{align*}
where $F$ is a pure one 
dimensional sheaf and $s$ is surjective in dimension one. 
In this paper, we 
concern how symmetries in 
the derived categories 
affect stable pair invariants. 
More precisely, we 
are interested in the 
following questions:
\begin{question}\label{quest}
\begin{enumerate}

\item 
How stable pair 
invariants on two
Calabi-Yau 3-folds or orbifolds
are related, if they have
equivalent derived categories ?
\item
How stable pair 
invariants on a Calabi-Yau 3-fold
are constrained, due to the presence of 
non-trivial autoequivalences of the derived category ?
\end{enumerate}
\end{question}
The purpose of this paper 
is to study 
Question~\ref{quest}
for stable pair invariants 
on Calabi-Yau 3-folds $X$ which 
contain $\mathbb{P}^2$, and their derived 
equivalent CY3 orbifolds $\yY$. 
Our results include
new kinds of
progress on Question~\ref{quest}: 
(i) 
relation of stable pair invariants 
on $X$ and $\yY$, where 
$\yY$ does \textit{not} satisfy the 
Hard-Lefschetz (HL) condition\footnote{
The HL condition on a CY3 orbifold
is equivalent to that 
the crepant resolution of its coarse 
moduli space
has at most one dimensional fibers.}
(ii) constraints of stable pair 
invariants on $X$ caused by Seidel-Thomas twist~\cite{ST}. 
The relation of our work 
with the existing works 
will be discussed in 
Subsection~\ref{subsec:related}. 

\subsection{Main result}
Let $X$ be a smooth projective 
Calabi-Yau 3-fold which 
contains a divisor 
\begin{align*}
\mathbb{P}^2 \cong D \subset X.
\end{align*}
We have two phenomena 
related to (i) and (ii) in 
Question~\ref{quest}: 
\begin{enumerate}
\item
The divisor $D$ is contracted by a birational morphism 
$f \colon X\to Y$
to an orbifold singularity
with type 
$\frac{1}{3}(1, 1, 1)$. 
The associated smooth Deligne-Mumford
stack $\yY \to Y$ is 
derived equivalent to $X$
\begin{align}\notag
\Phi \colon
D^b \Coh(\yY) \stackrel{\sim}{\to} D^b \Coh(X). 
\end{align}
\item
The object $\oO_{D}$ is a spherical object
in $D^b \Coh(X)$, 
and we have the associated 
autoequivalence called 
\textit{Seidel-Thomas twist}~\cite{ST}
\begin{align}\notag
\mathrm{ST}_{\oO_{D}} \colon D^b \Coh(X)
\stackrel{\sim}{\to} D^b \Coh(X). 
\end{align}
\end{enumerate}
Contrary to the 3-fold
flop case as in~\cite{Tcurve2}, \cite{Cala}, 
curves on $\yY$, $X$ may be 
transformed to objects with 
two dimensional supports under
the equivalence $\Phi$, $\mathrm{ST}_{\oO_D}$, 
respectively. 
In order to deal with this issue, we 
also involve generalized DT invariants~\cite{JS}, \cite{K-S}
\begin{align}\label{intro:rcm}
\DT(r, c, m) \in \mathbb{Q}
\end{align}
 on 
the non-compact Calabi-Yau 3-fold
$\pi \colon \omega_{\mathbb{P}^2} \to \mathbb{P}^2$. 
The invariant (\ref{intro:rcm})
counts semistable sheaves $E$ on $\omega_{\mathbb{P}^2}$
satisfying
\begin{align*}
\rank(\pi_{\ast}E)=r, \ c_1(\pi_{\ast}E)=c, \ 
\ch_2(\pi_{\ast}E)=m. 
\end{align*}
The following is a rough statement of our main result: 
\begin{thm}\emph{(Theorem~\ref{thm:main}, Theorem~\ref{thm:const})}
\label{thm:intro}
Assuming Conjecture~\ref{conj:critical}
below, we have the following: 
\begin{enumerate}
\item
The stable pair invariants on $\yY$ are
described as explicit polynomials of 
stable pair invariants on $X$ and generalized DT invariants (\ref{intro:rcm})
 on 
$\omega_{\mathbb{P}^2}$. 
\item
If there is $\lL \in \Pic(X)$
with $\lL|_{D} \cong \oO_D(1)$, 
then there exist explicit polynomial relations
among stable pair invariants on $X$ and generalized DT
invariants (\ref{intro:rcm})
on $\omega_{\mathbb{P}^2}$ caused by 
$\mathrm{ST}_{\oO_{D}} \circ \otimes \lL$. 
\end{enumerate}
\end{thm}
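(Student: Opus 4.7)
The plan is to set up a wall-crossing argument in a common triangulated category, namely $D^b \Coh(X) \simeq D^b \Coh(\yY)$, containing PT-type objects on both sides as well as the objects counted by $\DT(r,c,m)$ on $\omega_{\mathbb{P}^2}$. Concretely, for part (i), I would work in a heart $\aA \subset D^b \Coh(X)$ obtained by tilting $\Coh(X)$ along a torsion pair adapted to the divisor $D \subset X$: the tilting is chosen so that two-term complexes $(\oO_X \to F)$ on $X$, their images under $\Phi^{-1}$ giving PT pairs on $\yY$, and semistable sheaves set-theoretically supported on $D$ (which, since $D$ has trivial normal bundle along the orbifold direction, are in natural bijection with compactly supported semistable sheaves on $\omega_{\mathbb{P}^2}$) all appear as honest objects of $\aA$. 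I would then construct a one-parameter family of weak Bridgeland-type stability conditions on $\aA$ interpolating between PT stability on $X$ and PT stability on $\yY$, and identify the finitely many walls that are crossed as exactly those at which a PT object on one side decomposes into a PT object on the other side and a collection of semistable sheaves supported on $D$.

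Given this wall structure, I would apply the Joyce--Song / Kontsevich--Soibelman wall-crossing formula across each wall; each wall-crossing contributes a motivic/Euler-characteristic identity in the Hall algebra which, upon taking the integration map, becomes a polynomial identity relating the generating series of PT invariants on the two sides through the $\DT(r,c,m)$. The hypothesis of Conjecture~\ref{conj:critical} is invoked precisely here to guarantee that the requisite d-critical / Behrend-function identities hold so that the Hall algebra identity descends to numerical invariants. Since for each fixed numerical class only finitely many summands of semistable $\omega_{\mathbb{P}^2}$-sheaves can appear (by a boundedness argument using the decomposition of the Chern character under the derived equivalence $\Phi$), the resulting formula is genuinely polynomial rather than a formal series, yielding (i).

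For part (ii), I would combine the same wall-crossing machinery with the autoequivalence $T \cneq \mathrm{ST}_{\oO_D} \circ (\otimes \lL)$. The point is that $T$ preserves the Chern character class of a PT object on $X$ up to a shift controlled by $[\oO_D(k)]$ for various $k$ (which is what the assumption $\lL|_D \cong \oO_D(1)$ ensures), but does not preserve PT stability: for a PT object $I \in D^b \Coh(X)$, the transformed object $T(I)$ is not itself a PT pair but decomposes, in the heart $\aA$, into a PT object together with a collection of semistable sheaves supported on $D$. Running the wall-crossing computation on both $I$ and $T(I)$ and equating the resulting expressions for the invariants (which must agree since $T$ acts trivially on the relevant numerical Grothendieck group after accounting for the class shift) yields the desired polynomial constraints.

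The main obstacle is the wall-structure analysis together with the verification that, at each wall, the polystable constituents coming from the divisor direction really are captured by compactly supported sheaves on $\omega_{\mathbb{P}^2}$, so that their invariants are the $\DT(r,c,m)$. Technically this requires controlling the embedding of $\Coh_{\le D}(X)$ into the local model through the formal (or analytic) neighbourhood of $D$ and checking that this embedding is compatible with the relevant weak stability conditions and with the d-critical structures. A secondary technical point is the boundedness of the strata appearing in the wall-crossing, which relies on the positivity properties of $\lL$ restricted to $D$ together with the numerical restriction imposed by the PT support condition; once this boundedness is in place, the polynomiality assertion of Theorem~\ref{thm:intro} follows from the finite length of the wall-crossing chain in each fixed numerical class.
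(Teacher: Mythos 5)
Your proposal follows essentially the same route as the paper: a tilted heart adapted to $D$ (the paper works in the subcategory $\dD_{X/Y}=\langle\oO_X, D^b\Coh_{\le 1}(X/Y)\rangle_{\rm tr}$ rather than all of $D^b\Coh(X)$, but the tilt of $\Coh_{\le 1}(X/Y)$ and the one-parameter family of weak stability conditions interpolating between the large volume limit and the orbifold point are exactly as you describe), identification of the rank-zero wall contributions with Gieseker semistable sheaves on $\omega_{\mathbb{P}^2}$ via the formal neighbourhood of $D$, and the Joyce--Song formula under Conjecture~\ref{conj:critical}. The one point where your justification goes astray is in (ii): the equality of the two expansions does not come from $\Theta$ acting trivially on numerical classes --- it acts nontrivially, see (\ref{dia:Theta}) --- but from the fact that $\Theta$ carries the orbifold-point stable objects $\Phi(\oO_{\yY}\to F)$ of $\dD_{X/Y}$ to the orbifold-point stable objects $\Phi(\lL^{\dag}\otimes(\oO_{\yY}\to F))$ of the parallel category $\dD_{X/Y}^{\lL}=\langle\lL, D^b\Coh_{\le 1}(X/Y)\rangle_{\rm tr}$, so that the two wall-crossing chains both start from the same orbifold stable pair invariant $P_{\gamma}(\yY)$ and equating their large-volume expansions gives the constraint. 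With that correction your argument matches the paper's.
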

The result of Theorem~\ref{thm:intro} (i) 
in particular derives a
recursion formula of 
stable pair invariants on 
$X$ with curve classes proportional to $[l]$
for a line $l \subset D$
(in other words
stable pair invariants on $\omega_{\mathbb{P}^2}$), 
whose coefficients involve 
the invariants (\ref{intro:rcm})
(cf.~Corollary~\ref{cor:recursion}). 
The result of Theorem~\ref{thm:intro} (ii) implies 
a stronger statement:
the stable pair invariants 
on $X$ with curve classes $\beta$ 
satisfying $D \cdot \beta<0$ are
described in terms of 
those with curve classes $\beta-c[l]$
for $c>0$, 
with coefficients involving (\ref{intro:rcm})
 (cf.~Remark~\ref{rmk:form}).
 
In the previous paper~\cite{TodS3}, 
the author proved a recursion formula of the 
generating series of 
the invariants (\ref{intro:rcm}) 
with $r>0$
in terms of theta type series for indefinite lattices. 
It is also possible 
to describe the invariants (\ref{intro:rcm})
with $r=0, c>0$
in terms of stable pair invariants on $X$ 
with curve classes proportional to $[l]$ (cf.~Lemma~\ref{rankzero}). 
These results imply that, in principle, 
one can compute the relations of stable pair 
invariants concerning 
Question~\ref{quest}
for the derived equivalences $\Phi$ and $\mathrm{ST}_{\oO_D}$. 
The resulting formulas in Theorem~\ref{thm:main}, 
Theorem~\ref{thm:const}
are complicated, and we leave it a future work to 
give a more conceptual understanding 
of our result. 

We should mention that the result of Theorem~\ref{thm:intro}
is still conditional to 
the following conjecture, 
which was also assumed in the author's previous work~\cite{Tcurve2}.
\begin{conj}\label{conj:critical}
Let $\mM$ be the moduli 
stack of objects 
$E \in D^b \Coh(X)$ satisfying 
$\Ext^{<0}(E, E)=0$. 
For $[E] \in \mM$, let 
$G$ be a maximal reductive 
subgroup of $\Aut(E)$.
Then there is a $G$-invariant analytic 
open neighborhood $V$ of $0$ in 
$\Ext^1(E, E)$, 
a $G$-invariant holomorphic function $f\colon V\to \mathbb{C}$
with $f(0)=df|_{0}=0$, and a smooth morphism 
of complex analytic stacks
\begin{align*}
\Phi \colon [\{df=0\}/G] \to \mM
\end{align*}
of relative dimension $\dim \Aut(E)- \dim G$. 
\end{conj}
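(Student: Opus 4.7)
The plan is to produce the function $f$ from a cyclic $L_\infty$-minimal model of the derived endomorphism algebra of $E$, and then upgrade this formal construction to a holomorphic, $G$-equivariant one. First I would attach to $E$ the dg-algebra $A \cneq \dR\HOM(E, E)$, which by Serre duality and the Calabi-Yau condition $\omega_X \cong \oO_X$ carries a non-degenerate cyclic pairing of degree $-3$. The vanishing $\Ext^{<0}(E, E) = 0$ ensures that $A$ is quasi-isomorphic to a connective dg-algebra, so the formal deformation theory of $E$ is controlled by Maurer-Cartan elements of degree one, with obstructions living in $\Ext^2(E,E)\cong \Ext^1(E,E)^{\vee}$.

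Next I would apply the homological perturbation lemma in its cyclic form (Kontsevich-Soibelman) to transport the structure on $A$ to a minimal cyclic $L_\infty$-algebra with brackets $\{m_n\}_{n\geq 2}$ on the cohomology $\Ext^*(E,E)$, preserving the Serre pairing. Contracting the higher brackets against the pairing yields a formal potential
\begin{align*}
W = \sum_{n \geq 2} \frac{1}{n+1} \langle m_n(x, \dots, x), x \rangle
\end{align*}
on $\Ext^1(E,E)$, whose critical locus $\{dW = 0\}$ formally models the deformation space of $E$. The reductive group $G \subset \Aut(E)$ acts on $\Ext^1(E,E)$ preserving $W$, and choosing the transfer data $G$-equivariantly, via averaging over a maximal compact subgroup of $G$ and then complexifying, makes $W$ a $G$-invariant formal power series vanishing to order $\geq 3$ at the origin.

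The main difficulty, and the reason this statement is stated as a conjecture, is to upgrade $W$ from formal to holomorphic on an honest analytic neighborhood $V \subset \Ext^1(E, E)$. Here one would realize $E$ by a finite complex of locally free sheaves, work with the Dolbeault resolution of the endomorphism complex using $K$-invariant hermitian metrics, and solve the Maurer-Cartan equation analytically via elliptic estimates on the Green's operator in the style of Kuranishi theory; this provides a convergent transfer and a convergent $f$. Alternatively one invokes an analytic enhancement of the shifted-symplectic Darboux theorem of Brav-Bussi-Joyce applied to the derived moduli stack of perfect complexes on $X$, whose $(-1)$-shifted symplectic structure comes from the Calabi-Yau pairing on $A$. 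Once $f$ is in hand, the smooth morphism $\Phi$ of relative dimension $\dim \Aut(E) - \dim G$ is constructed by an analytic Luna slice argument transverse to the $\Aut(E)/G$-orbit of $E$, combined with a formal-to-analytic approximation theorem that promotes the known formal isomorphism on completions at $0$ and $[E]$ to a smooth morphism of analytic stacks on a neighborhood. The hardest step throughout is the analytic convergence of the cyclic potential.
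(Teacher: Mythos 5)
This statement is Conjecture~\ref{conj:critical}: the paper does not prove it, but explicitly assumes it, and the author devotes a subsection to explaining why it remains open. Your proposal is a competent summary of the expected strategy (cyclic $L_\infty$ minimal model of $\dR \HOM(E,E)$, the induced potential $W$, equivariant transfer via a maximal compact of $G$), but it is not a proof, and you effectively concede this yourself when you write that ``the hardest step throughout is the analytic convergence of the cyclic potential.'' That step, together with the construction of the smooth morphism $\Phi$, is exactly where every known approach currently stops, so the proposal has a genuine gap rather than a complete argument.

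Concretely, two things are missing. First, the homological perturbation lemma produces $W$ only as a formal power series; the Kuranishi-type analytic argument you sketch controls the deformation functor of the single object $E$, but the conjecture demands more: a $G$-invariant holomorphic $f$ on an actual open $V \subset \Ext^1(E,E)$ together with a \emph{smooth morphism of complex analytic stacks} $[\{df=0\}/G] \to \mM$ of relative dimension $\dim \Aut(E) - \dim G$. Matching formal completions at $0$ and $[E]$ (even after an Artin-type approximation) does not by itself yield such a morphism on a neighborhood, nor its smoothness or the stated relative dimension. Second, the alternative route through the $(-1)$-shifted symplectic structure of Pantev--To\"en--Vaquie--Vezzosi and the Darboux theorem of Ben-Bassat--Brav--Bussi--Joyce gives Zariski-local critical charts on an atlas of the derived stack, but — as the paper itself points out — this is not enough to conclude the conjecture; Bussi's refinement, which does come close, requires $\mM$ to be Zariski-locally a quotient stack $[S/\GL_n(\mathbb{C})]$, a hypothesis that is not known to hold for the moduli stack of complexes considered here. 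Until one of these two gaps is closed, the argument remains a plan rather than a proof, which is precisely why the paper states the result as a conjecture and proves only the Euler-characteristic versions of its main theorems unconditionally.
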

The above conjecture
has been a technical obstruction 
to generalize
Joyce-Song's wall-crossing formula 
of DT invariants~\cite{JS} for 
coherent sheaves to the derived category.  
It was proved 
for $E \in \Coh(X)$ by Joyce-Song~\cite{JS}, 
and announced by Behrend-Getzler. 
There exist more recent progress 
toward it, which will be reviewed in 
the next subsection. 
Without assuming Conjecture~\ref{conj:critical}, 
we can prove Euler characteristic version
of Theorem~\ref{thm:intro}
(i.e. results 
for the naive Euler characteristics of stable pair 
moduli spaces), as 
stated in Subsection~\ref{subsec:Ever}.

\subsection{Related works}\label{subsec:related}
In~\cite{Tcurve2}, \cite{Cala}, 
the flop transformation formula 
of stable pair invariants was 
obtained 
from the categorical viewpoint, 
giving an answer to 
Question~\ref{quest} (i)
for birational Calabi-Yau 3-folds. 
In the orbifold case, let 
$Y$ be a Calabi-Yau 3-fold 
with Gorenstein quotient 
singularities and 
$X \to Y$
its crepant resolution. 
Under the HL
condition
on the associated 
Deligne-Mumford stack 
$\yY \to Y$, 
Bryan-Cadman-Young~\cite{BCY}
formulated a conjectural relationship 
between DT invariants on $X$ and those
on $\yY$. 
Combined with the DT/PT correspondence~\cite{BrH}, \cite{Tcurve1}, \cite{StTh}
on $X$, 
and Bayer's announced work~\cite{BaDTPT}
on it 
for CY3 orbifolds with HL 
 condition, 
we have a conjectural answer to Question~\ref{quest} (i)
in this situation. 
The conjecture in~\cite{BCY} is still open, but 
some progress toward it is obtained in~\cite{Cala2}, \cite{BrSt}, \cite{DR}.

In the above HL case, 
the resulting formula
should be
described by a product formula of the
generating series of stable pair invariants. 
In our situation of Theorem~\ref{thm:intro}, 
the stack $\yY$ does not satisfy the HL 
condition, and 
it seems unlikely that the 
results are formulated as product formulas of 
the generating series. 
From the categorical viewpoint, 
the main 
difference 
from the HL case
is the non-triviality 
of the Euler pairings between 
objects supported on 
the fibers of $X \to Y$. 
Due to this non-triviality, the 
combinatorics of the wall-crossing becomes complicated, 
and it seems hard to understand the result
in terms of the generating series. 
In any case, we hope
that the result of Theorem~\ref{thm:intro}
would give a hint toward a
generalization of the conjecture in~\cite{BCY}
without the HL condition. 

There exist few works
concerning Question~\ref{quest} (ii)
so far. 
We can say that the rationality of the generating 
series of stable pair invariants, 
conjectured in~\cite{PT} and proved in~\cite{Tolim2}, ~\cite{BrH},
is interpreted to be an answer to 
Question~\ref{quest} (ii) 
for the derived dualizing functor. 
Also the automorphic property of 
sheaf counting invariants on local K3 surfaces
under Hodge isometries, together with 
product expansion of the generating series of 
stable pair invariants on them~\cite{TodK3}
in terms of the former invariants,
is interpreted to be an answer to Question~\ref{quest} (ii)
for autoequivalences of K3 surfaces~\cite{Tst3}, \cite{TodK3}. 
The result of Theorem~\ref{thm:intro} (ii) 
provides a further example of 
such a phenomena. 

In GW theory, 
an analogue of Question~\ref{quest} (i) 
has been one of the central themes. 
Since 
birational Calabi-Yau 3-folds or orbifolds
should be
derived equivalent (cf.~\cite{Br1}, \cite{BKR}, \cite{Kawlog}), 
Question~\ref{quest} (i) for 
GW theory is related to the
analytic continuation problem of 
quantum cohomologies discussed 
in~\cite{Yong}, \cite{BrGr}, \cite{CIT}. 
Also 
we 
expect that Question~\ref{quest} (ii)
is related to the modularity problem
 of 
partition functions of GW invariants, 
as 
the action of 
autoequivalences on the derived category
should 
correspond to 
the monodromy action 
under the mirror symmetry. 
We refer to~\cite{OP}, \cite{MRS}
for the works on the 
modularity in GW theory.

In recent years, we have
seen progress toward 
an algebraic version of 
Conjecture~\ref{conj:critical}
using derived algebraic geometry. 
By the work of Pantev-To$\ddot{\textrm{e}}$n-Vaquie-Vezzosi~\cite{PTVV}, 
the stack $\mM$
is shown to be a derived stack 
with a $(-1)$-shifted
symplectic structure. Using this fact, 
B.~Bassat-Brav-Bussi-Joyce~\cite{BBBJ}
showed that $\mM$ has Zariski locally 
an atlas which is written as a critical 
locus of a certain algebraic function. 
Still this is not enough to conclude 
Conjecture~\ref{conj:critical}. 
However under the assumption 
that $\mM$ is Zariski locally 
written as a quotient stack of the form 
$[S/\GL_n(\mathbb{C})]$ 
for some complex scheme $S$, 
Bussi~\cite[Theorem~4.3]{Bussi}
showed 
a result which is very similar to 
Conjecture~\ref{conj:critical}.
Indeed her result implies
relevant Behrend function identities
for objects in $\mM$, 
which are enough for our applications. 
At this moment, the 
author does not know how to eliminate the 
local quotient stack assumption, nor prove it
in the situations we are interested in. 

\subsection{Ideas behind the proof of Theorem~\ref{thm:intro}}
\label{subsec:idea}
The proof of Theorem~\ref{thm:intro} follows from 
wall-crossing argument in the space of weak stability conditions, 
as in the author's previous 
papers~\cite{Tcurve1}, \cite{Tcurve2}, \cite{TodK3}. 
In order to explain the argument, we first
recall Bayer-Macri's description of 
the space $\Stab(\omega_{\mathbb{P}^2})$
of Bridgeland stability conditions on 
$D^b \Coh(\omega_{\mathbb{P}^2})$
in~\cite{BaMa}. 
They showed that 
the double quotient stack 
of $\Stab(\omega_{\mathbb{P}^2})$ by 
the actions of $\Aut D^b \Coh(\omega_{\mathbb{P}^2})$
and the additive group $\mathbb{C}$
contains the 
parameter space of the mirror family 
of $\omega_{\mathbb{P}^2}$. 
The latter space has three special 
points: large volume limit, conifold point
and orbifold point (cf. Figure~\ref{fig:one}). 
Near the large volume limit, the 
semistable objects
consist of (essentially) Gieseker semistable sheaves
on $\omega_{\mathbb{P}^2}$. 
At the orbifold point, 
the semistable objects
consist of representations of the McKay quiver
under derived McKay correspondence~\cite{BKR}. 
By taking a path connecting the
orbifold point with the 
large volume limit, one 
can relate representations of the McKay quiver
with semistable sheaves on $\omega_{\mathbb{P}^2}$
by wall-crossing phenomena: 
there is a finite number of walls on 
the above path such that 
the set of semistable objects are constant on 
the interval, but jump at walls. 

\begin{figure}[htbp]
 \begin{center}
  \includegraphics[width=60mm]{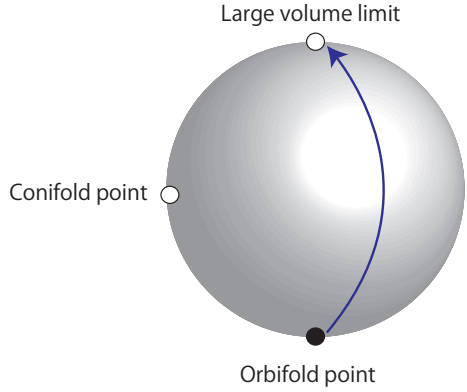}
 \end{center}
 \caption{The space of stability conditions on $\omega_{\mathbb{P}^2}$}
 \label{fig:one}
\end{figure}

Let us return to our global situation. 
In the situation of Theorem~\ref{thm:intro}, 
we define the following triangulated category
\begin{align*}
\dD_{X/Y} \cneq \langle \oO_X, D^b \Coh_{\le 1}(X/Y) \rangle_{\rm{tr}}. 
\end{align*}
Here $\Coh_{\le 1}(X/Y)$ 
is the category 
of coherent sheaves on $X$
which are at most one dimensional outside 
$D$. 
Our strategy 
is to construct a  
path similar to Figure~\ref{fig:one}
in the space of weak 
stability conditions on 
$\dD_{X/Y}$
\begin{align*}
\sigma_t \in \Stab_{\Gamma_{\bullet}}(\dD_{X/Y}), \ t \in \mathbb{R}_{\ge 0}. 
\end{align*}
The above one parameter family 
is an analogue of the path in Figure~\ref{fig:one}, i.e.
$\lim_{t\to \infty}\sigma_t$ corresponds to the large
volume limit, and $\sigma_0$
corresponds to the orbifold point. 
We show that the rank
one $\sigma_t$-stable objects 
for $t\gg 1$
consist of objects of the form 
\begin{align}\label{intro:OF}
\oO_X(rD) \otimes (\oO_X \to F)
\end{align}
for $r\in \mathbb{Z}$
and a stable pair 
$(\oO_X \to F)$
on $X$. 
We also show that the rank one 
$\sigma_0$-stable objects 
consist of objects of the form
\begin{align}\label{intro:OF2}
\Phi(\oO_{\yY} \to F)
\end{align}
for a stable 
pair $(\oO_{\yY} \to F)$ on $\yY$. 
Then we can relate objects (\ref{intro:OF}), (\ref{intro:OF2}) 
by wall-crossing phenomena. 
If we assume Conjecture~\ref{conj:critical}, 
then Joyce-Song's 
wall-crossing formula~\cite{JS}
is applied 
in our setting.
It
relates stable pair invariants on $\yY$ with those on $X$
together with the invariants (\ref{intro:rcm}), 
giving Theorem~\ref{thm:intro} (i). 

We now explain the idea of Theorem~\ref{thm:intro} (ii).
It follows from  
a general principle explained in~\cite[Section~1]{TGep}. 
In general, suppose that there is a stability 
condition $\tau$ on the derived category of 
a Calabi-Yau 3-fold, 
which has a symmetric property 
with respect to an  
autoequivalence $\Theta$
in a certain sense. 
In~\cite{TGep}, such a stability condition $\tau$
was called \textit{Gepner type} 
with respect to $\Theta$. 
Let
$\DT_{\tau}(v)$ 
be the DT type invariant (if it 
exists) counting 
$\tau$-semistable objects 
with numerical class $v$. 
The Gepner type property of $\tau$
would yield 
\begin{align}\label{DT:Gepner}
\DT_{\tau}(v)=\DT_{\tau}(\Theta_{\ast}v).
\end{align}
On the other hand, one may 
relate both sides of (\ref{DT:Gepner})
with 
classical DT invariants
counting sheaves or curves
by wall-crossing. 
Combined with the identity (\ref{DT:Gepner}), 
one may obtain 
non-trivial constraints among classical DT invariants
caused by $\Theta$. 

In Figure~\ref{fig:one}, the orbifold 
point is known to be 
Gepner type with respect to 
$\Theta=\mathrm{ST}_{\oO_{D}} \circ \otimes \lL$. 
Since the 
weak stability 
condition $\sigma_0$
on $\dD_{X/Y}$
is an analogue 
of the orbifold point, 
one 
expects that the above 
general philosophy
may be applied to obtain constraints 
among stable pair invariants on $X$
caused by $\Theta$.  
In our situation, the equivalence
$\Theta$
does not preserve $\dD_{X/Y}$, so $\sigma_0$
is not Gepner type in a strict sense. 
However one can prove that 
$\Theta$ takes $\sigma_0$-stable 
objects to similar stable objects 
in another triangulated category
\begin{align*}
\dD_{X/Y}^{\lL} \cneq \langle \lL, D^b \Coh_{\le 1}(X/Y) \rangle_{\rm{tr}}. 
\end{align*}
Namely there also exists a one parameter family
$\sigma_t^{\lL}$ of weak stability conditions on $\dD_{X/Y}^{\lL}$
such that $\sigma_0$-stable objects and $\sigma_0^{\lL}$-stable objects
coincide under the equivalence $\Theta$. 
We then apply the similar wall-crossing formula 
in $\dD_{X/Y}^{\lL}$ from $\sigma_0^{\lL}$ to $\sigma_t^{\lL}$
for $t \gg 1$. 
It implies another
description
of stable pair invariants on $\yY$ in terms of those on $X$ and 
the invariants (\ref{intro:rcm}). 
By comparing it with the result of Theorem~\ref{thm:intro} (i), 
we obtain the constraints in Theorem~\ref{thm:intro} (ii). 

\subsection{Plan of the paper}
In Section~\ref{sec:Derived}, we recall derived 
equivalences concerning Calabi-Yau 3-folds containing 
$\mathbb{P}^2$, and fix some notation. 
In Section~\ref{sec:Stable}, we recall stable pair invariants, 
generalized DT invariants on local $\mathbb{P}^2$, 
and their properties. 
In Section~\ref{sec:weak}, we
construct a one parameter family of weak 
stability conditions on the triangulated category $\dD_{X/Y}$. 
In Section~\ref{sec:Comp}, we describe the wall-crossing phenomena 
with respect to our weak stability conditions, 
and prove Theorem~\ref{thm:intro}. 

\subsection{Acknowledgment}
The author would like to thank Arend Bayer for
explaining his announced work~\cite{BaDTPT}. 
This work is supported by World Premier 
International Research Center Initiative
(WPI initiative), MEXT, Japan. This work is also supported by Grant-in Aid
for Scientific Research grant (No. 26287002)
from the Ministry of Education, Culture,
Sports, Science and Technology, Japan.

\subsection{Notation and convention}
In this paper, all 
the varieties or stacks 
are defined over $\mathbb{C}$. 
For a $d$-dimensional variety $X$, we denote by $H^{\ast}(X, \mathbb{Q})$ 
the even part of the singular cohomologies of $X$, and
write its element as 
$(v^0, v^1, \cdots, v^d)$ for $v^i \in H^{2i}(X, \mathbb{Q})$.
We sometimes abbreviate $\mathbb{Q}$ and just write 
$H^{2i}(X, \mathbb{Q})$, $H_{2i}(X, \mathbb{Q})$
as $H^{2i}(X)$, $H_{2i}(X)$. 
For a triangulated category $\dD$ and a set of objects
$S$ in $\dD$, we denote by 
$\langle S \rangle_{\rm{tr}}$ the triangulated closure, i.e. 
the smallest triangulated category of $\dD$ 
which contains $S$. 
Also 
$\langle S \rangle_{\rm{ex}}$ is the extension closure, i.e. 
the smallest extension closed subcategory in $\dD$
which contains $S$.
For a 
Deligne-Mumford stack $\yY$, we denote by 
$\Coh(\yY)$
the abelian category of coherent sheaves on $\yY$. 
For $d\in \mathbb{Z}$, we denote by 
$\Coh_{\le d}(\yY) \subset \Coh(\yY)$
the subcategory of objects $E\in \Coh(\yY)$ 
satisfying $\dim \Supp(E) \le d$. 
If $d=0$, we write the subscript `$\le 0$' just as `$0$', 
e.g. write $\Coh_{\le 0}(X)$ as $\Coh_0(X)$, etc.  
For $E \in D^b \Coh(\yY)$, we denote by 
$\hH^i(E) \in \Coh(\yY)$ its $i$-th 
cohomology. 

\section{Derived category of Calabi-Yau 3-folds containing $\mathbb{P}^2$}
\label{sec:Derived}
\subsection{Geometry of Calabi-Yau 3-folds containing $\mathbb{P}^2$}
Let $X$ be a smooth projective Calabi-Yau 3-fold, i.e. 
\begin{align*}
K_X=0, \ H^1(X, \oO_X)=0. 
\end{align*}
We always assume that 
there is a closed embedding 
\begin{align*}
i \colon \mathbb{P}^2 \hookrightarrow X
\end{align*}
whose image we denote by $D$. 
There exist several examples of such 
Calabi-Yau 3-folds, as follows:  
\begin{exam}\label{exam1}
Let $X$ be the hypersurface in $\mathbb{P}^3 \times \mathbb{P}^1$
given by
\begin{align*}
X=\left\{ y_1 \left( \sum_{i=1}^4 x_i^4 \right)
 + y_2 \prod_{i=1}^4 x_i=0 \right\}. 
\end{align*}
Here $x_i, 1\le i\le 4$ are homogeneous coordinates of $\mathbb{P}^3$
and $y_i, 1\le i \le 2$ are those of $\mathbb{P}^1$. 
Then $X$ is a smooth Calabi-Yau 3-fold 
which contains planes $(y_1=x_i=0)$
for each $1\le i\le 4$. 
\end{exam}
\begin{exam}\label{exam2}
Let $S_1 \subset \mathbb{P}^3$ be a plane, 
$S_7 \subset \mathbb{P}^3$ a smooth 
hypersurface of degree seven, 
such that the divisor $S_1+S_7$ is 
a normal crossing divisor. 
Let $Z \to \mathbb{P}^3$ be the double 
cover branched along $S_1 + S_7$. Then $Z$ 
has $A_1$-singularities along the pull-back of $S_1 \cap S_7$. 
Let 
\begin{align*}
X \to Z
\end{align*} be the blow-up along the singular locus. 
Then $X$ is a smooth Calabi-Yau 3-fold, and 
the pull-back of $S_1$ contains $\mathbb{P}^2$ as an irreducible component. 
See~\cite[Section~4.2]{Kapu}.
\end{exam}

\begin{exam}\label{exam3}
 Let $E$ be the elliptic curve which admits
an automorphism of order $3$. The group 
$G=\mathbb{Z}/3\mathbb{Z}$ acts on $E \times E \times E$
diagonally. Then the 
crepant resolution
\begin{align*}
X \to (E \times E \times E)/G
\end{align*} 
is a smooth Calabi-Yau 3-fold which 
contains 27 planes. 
See~\cite[Section~2]{Bea}. 
\end{exam}
Let $H$ be an ample divisor on $X$ and 
$l \subset D$ a line. 
We note that
\begin{align*}
D \cdot l=-3, \ D^2 =-3[l], \ D^3=9. 
\end{align*}
The divisor
$3H +(H \cdot l) D$
is nef and big on $X$. By the basepoint 
free theorem, 
some multiple of it
gives a birational morphism
\begin{align}\label{fXY}
f \colon X \to Y
\end{align}
which contacts a divisor $D \subset X$ to a point $p \in Y$. 
It is well-known that\footnote{For example, the same argument 
of~\cite[(3.3.5)]{Mori}
shows the isomorphism (\ref{OYp}). }
\begin{align}\label{OYp}
\widehat{\oO}_{Y, p} 
\cong \mathbb{C}\db[ x, y, z \db]^{G}.
\end{align}
Here $G \cneq \mathbb{Z}/3\mathbb{Z}$ acts on $\mathbb{C}\db[x, y, z\db]$ via 
weight $(1, 1, 1)$. Since $p \in Y$
is a quotient singularity, we have the 
associated smooth Deligne-Mumford stack\footnote{We 
refer to~\cite[Definition~2.1]{Kawlog} for the construction of $\yY$. 
We note that $Y$ satisfies the condition $(\ast)$
in~\cite[Definition~2.1]{Kawlog} since $p\in Y$ is a quotient singularity.}
\begin{align}\label{yYY}
g \colon \yY \to Y
\end{align}
whose coarse moduli space is isomorphic to $Y$. 
The diagram
\begin{align}\label{dia:global}
\xymatrix{
&  \ar[dl]_{p} X \times_Y \yY \ar[dr]^{q} & \\
X \ar[dr]_f \ar@{.>}[rr]^{\phi} &   &  \yY \ar[dl]^g \\
&  Y  &
}
\end{align}
is pulled back 
via $\Spec \widehat{\oO}_{Y, p} \to Y$ to 
the pull-back via $\Spec \widehat{\oO}_{\mathbb{C}^3/G, 0} \to \mathbb{C}^3/G$
of the standard McKay 
diagram\footnote{By abuse of notation, 
in the diagram (\ref{dia:loc}), we use the same notation 
for the morphisms used in the diagram (\ref{dia:global}).}
of local $\mathbb{P}^2$
\begin{align}\label{dia:loc}
\xymatrix{
& \ar[dl]_{p}  [\mathrm{Bl}_0 \mathbb{C}^3/G] \ar[dr]^{q} \\
U=\omega_{\mathbb{P}^2}
 \ar[dr]_f \ar@{.>}[rr]^{\phi} &   &  [\mathbb{C}^3/G] \ar[dl]^g \\
&  \mathbb{C}^3/G.
  &
}
\end{align}
 Here $\mathrm{Bl}_0 \mathbb{C}^3 \to \mathbb{C}^3$ is the blow-up 
at the origin, which admits the $G$-action since $G$ 
fixes the origin. Also 
\begin{align}\label{pi}
\pi \colon 
U=\omega_{\mathbb{P}^2} \to \mathbb{P}^2
\end{align}
is the total space of the 
canonical line bundle of $\mathbb{P}^2$, which 
is the coarse moduli space of the quotient stack 
$[\mathrm{Bl}_0 \mathbb{C}^3/G]$.

\subsection{Derived equivalence}
Since the diagram (\ref{dia:loc})
is toric, we can apply~\cite[Theorem~4.2]{Kawlog}
(also see~\cite{BKR})
to show the derived equivalence
\begin{align}\label{Phi}
\Phi \cneq 
\dR p_{\ast} \dL q^{\ast} \colon D^b \Coh(\yY) \stackrel{\sim}{\to}
D^b \Coh(X). 
\end{align}
Let $\rho_j$ be the 
one dimensional representation of $G$ with weight $j$. 
The objects
\begin{align*}
S_j \cneq \rho_{-j} \otimes \oO_0 \in \Coh_G(\mathbb{C}^3)
\end{align*}
naturally define the objects
$S_j \in \Coh(\yY)$. 
Here $\Coh_G(\mathbb{C}^3)$ is the 
category of $G$-equivariant coherent sheaves on $\mathbb{C}^3$, 
which coincides with the category of coherent sheaves on $[\mathbb{C}^3/G]$. 
Also we set
\begin{align}\label{def:T}
T_0 \cneq \oO_{D}, \ 
T_1 \cneq \Omega_{D}(1)[1], \ 
T_2 \cneq \oO_{D}(-1)[2]. 
\end{align}
We have the following lemma. 
\begin{lem}\label{lem:ST}
We
have $\Phi(\oO_{\yY}) \cong \oO_X$
and $\Phi(S_j)\cong T_j$
for $0\le j\le 2$.  
\end{lem}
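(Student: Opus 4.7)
The plan is to reduce Lemma~\ref{lem:ST} to computations on the local McKay model (\ref{dia:loc}), using the fact that $\yY \to Y$ and $X \to Y$ are isomorphisms away from $p$ and $D$ respectively, and that the derived equivalence $\Phi$ is compatible with base change to $\Spec \widehat{\oO}_{Y,p} \to Y$. The isomorphism $\Phi(\oO_{\yY}) \cong \oO_X$ is a standard property of McKay/BKR-type equivalences: on the dense open locus where $f$ is an isomorphism this is tautological, and near $D$ it follows from the local computation $\dR p_{\ast} \oO_{[\mathrm{Bl}_0 \mathbb{C}^3/G]} \cong \oO_{\omega_{\mathbb{P}^2}}$ for the coarse-moduli morphism of a smooth toric DM stack whose generic stabilizer is trivial.

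For $\Phi(S_j) \cong T_j$, the object $S_j$ is set-theoretically supported at the unique stacky point of $\yY$, so $\Phi(S_j)$ is supported on $D$ and the computation again localizes to the McKay model. I would resolve $S_j = \rho_{-j} \otimes \oO_0$ by the $G$-equivariant Koszul complex on $\mathbb{C}^3$ built from the sections $x,y,z$, whose terms are direct sums of equivariant trivial bundles twisted by the characters $\rho_{-j}, \rho_{-j-1}, \rho_{-j-2}, \rho_{-j-3}$. Applying $\dL q^{\ast}$ produces a Koszul-type complex on $[\mathrm{Bl}_0 \mathbb{C}^3/G]$ in which $x,y,z$ now cut out the exceptional divisor $\mathbb{P}^2$, and then $\dR p_{\ast}$ combined with taking $G$-invariants replaces each character $\rho_k$ by the associated tautological line bundle $\pi^{\ast}\oO_{\mathbb{P}^2}(k)$ on $\omega_{\mathbb{P}^2}$. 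Tracing cohomology of the resulting complex and comparing with Beilinson's resolution of the diagonal on $\mathbb{P}^2$ identifies the output with $\oO_D$, $\Omega_D(1)[1]$ and $\oO_D(-1)[2]$ for $j=0,1,2$ respectively, matching the definition of $T_j$.

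The step I expect to require the most care is this last identification: tracking characters versus twists of tautological line bundles cleanly enough to confirm that the simples $S_j$ land on the precise dual collection of the strong exceptional collection $\{\oO_D, \oO_D(1), \oO_D(2)\}$, pushed forward from the zero section $D \subset \omega_{\mathbb{P}^2}$. The only subtlety is normalization: a mismatch between my character convention $\rho_{-j}$ and the convention used for the tautological line bundles can shift indices or produce an overall twist. An alternative, perhaps cleaner route is to invoke the general McKay correspondence for the diagonal action of $\mathbb{Z}/n$ on $\mathbb{C}^n$, where the images under the BKR-type equivalence of the equivariant simple skyscrapers are known to be exactly the shifted structure sheaf and cotangent-type twists on the exceptional $\mathbb{P}^{n-1}$. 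Either approach produces the three identities simultaneously and completes the proof.
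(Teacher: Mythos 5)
Your proposal is correct, but it takes a genuinely different route from the paper's. The paper also reduces to the local model $\Phi=\dR p_{\ast}\dL q^{\ast}\colon D^b\Coh_G(\mathbb{C}^3)\to D^b\Coh(\omega_{\mathbb{P}^2})$, but then avoids any direct resolution of $S_j$: it invokes the tilting equivalence $\dR\Hom(\eE,-)$ with $\eE=\oO_U\oplus\oO_U(1)\oplus\oO_U(2)$, under which $T_0,T_1,T_2$ become the three simple modules over $B=\End(\eE)$, so that $\Phi(S_j)\cong T_j$ is equivalent to the orthogonality $\dR\Hom(\oO_U(k),\Phi(S_j))=\mathbb{C}^{\delta_{jk}}$; by adjunction this reduces to the single isomorphism $\Phi(\oO_{\mathbb{C}^3}\otimes\rho_{-k})\cong\oO_U(k)$, which is read off from the $G$-decomposition of $p'_{\ast}\oO_{\mathrm{Bl}_0\mathbb{C}^3}$. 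Your key computational input --- that the characters $\rho_{-k}$ correspond to the tautological bundles $\oO_U(k)$ --- is exactly the same isomorphism, but you then conclude by tracing the cohomology of the pulled-back equivariant Koszul complex and matching it against the Beilinson collection, rather than by the orthogonality characterization of simples. The paper's argument buys economy: no cohomology sheaves of complexes need to be computed, and the identification of $T_j$ with the dual collection is absorbed into a citation of the standard quiver description of $D^b\Coh(\omega_{\mathbb{P}^2})$. Your argument buys explicitness (it exhibits $\Phi(S_j)$ directly) at the cost of the normalization bookkeeping you rightly flag; your fallback of quoting the known McKay correspondence for the diagonal $\mathbb{Z}/n$-action on $\mathbb{C}^n$ is also legitimate and gives the three objects $\oO_{\mathbb{P}^{n-1}},\Omega(1)[1],\ldots$ at once. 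Both arguments are sound.
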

\begin{proof}
It is enough to prove the same claim for the local
derived equivalence
\begin{align}\notag
\Phi=\dR p_{\ast} \dL q^{\ast} \colon D^b \Coh_G(\mathbb{C}^3) 
\stackrel{\sim}{\to}
D^b \Coh(\omega_{\mathbb{P}^2}).
\end{align}
Let $\eE$ be the vector bundle on $U=\omega_{\mathbb{P}^2}$ given by
\begin{align*}
\eE=\oO_U \oplus \oO_U(1) \oplus \oO_U(2). 
\end{align*}
It is well-known that 
we have the derived equivalence
(cf.~\cite{Brs5})
\begin{align}\label{E}
\dR \Hom (\eE, -) \colon 
D^b \Coh(U) \stackrel{\sim}{\to} D^b \modu (B)
\end{align}
where $B$ is the non-commutative algebra 
defined by $\End(\eE)$. 
The algebra $B$ is the path algebra of 
a quiver with three vertices and some relations.
Under the equivalence (\ref{E}), the objects
$T_0$, $T_1$, $T_2$
are sent to the simple objects corresponding to 
the above three vertices. 
Therefore the isomorphism $\Phi(S_j) \cong T_j$ follows
if we show that
\begin{align}\label{Rhom}
\dR \Hom(\oO_U(k), \Phi(S_j))=\mathbb{C}^{\delta_{jk}}
\end{align}
for $0\le j, k \le 2$. 
Let $p' \colon \mathrm{Bl}_0(\mathbb{C}^3) \to U$ be the 
Galois $G$-cover. Then we have
\begin{align*}
p'_{\ast} \oO_{\mathrm{Bl}_0(\mathbb{C}^3)}
\cong \oO_U \oplus \oO_U(1) \otimes \rho_1 \oplus \oO_U(2) \otimes \rho_2. 
\end{align*}
It follows that 
\begin{align}\notag
\Phi(\oO_{\mathbb{C}^3} \otimes \rho_{-k})
&\cong (\oO_U \otimes \rho_{-k} \oplus \oO_U(1) \otimes \rho_{1-k} 
\oplus \oO_U(2) \otimes \rho_{2-k})^{G} \\ \label{cong:OU}
&\cong \oO_U(k)
\end{align}
 for $0\le k\le 2$.
Therefore (\ref{Rhom}) holds.
The isomorphism $\Phi(\oO_{\yY}) \cong \oO_X$
also follows from the isomorphism (\ref{cong:OU})
for $k=0$. 
\end{proof}

We define the following abelian category
\begin{align}\label{Coh1}
\Coh_{\le d}(X/Y) \cneq \{ E \in \Coh(X) : \dim \Supp f_{\ast}E \le d\}. 
\end{align}
By the construction of $\Phi$, we have the commutative diagram
\begin{align}\label{cohd}
\xymatrix{
D^b \Coh_0(\yY) \ar[d]^{\Phi} \ar@{^{(}->}[r] & D^b \Coh_{\le 1}(\yY)
 \ar[d]^{\Phi}
 \ar@{^{(}->}[r] & D^b \Coh(\yY) \ar[d]^{\Phi} \\
D^b \Coh_0(X/Y) \ar@{^{(}->}[r] &  D^b \Coh_{\le 1}(X/Y)
\ar@{^{(}->}[r] 
& D^b \Coh(X)
}
\end{align}
such that each vertical 
arrows are equivalences.

\subsection{Numerical Grothendieck groups}
Let $\dD$ be a $\mathbb{C}$-linear triangulated 
category satisfying
\begin{align*}
\sum_{i\in \mathbb{Z}} \dim \Hom(E, F[i])<\infty
\end{align*}
for all $E, F \in \dD$. 
Under the above condition, the 
following Euler pairing is well-defined
\begin{align}\label{Euler}
\chi(E, F) \cneq \sum_{i\in \mathbb{Z}}(-1)^i \dim \Hom(E, F[i]). 
\end{align}
The above pairing descends to the pairing on 
the Grothendick group $K(\dD)$ of $\dD$. 
Two elements $E_1, E_2 \in K(\dD)$ are called \textit{numerically 
equivalent} if we have
$\chi(E_1, F)=\chi(E_2, F)$ for any $F \in K(\dD)$. 
The \textit{numerical Grothendieck group} $N(\dD)$
of $\dD$ is defined to be
the group of numerical equivalence classes of $K(\dD)$. 
By the definition, the Euler pairing (\ref{Euler})
descends to the 
perfect pairing on $N(\dD)$.

Let $X$, $\yY$ be as in the previous subsections. 
We set
\begin{align*}
N(X) \cneq N(D^b \Coh(X)), \ N(\yY) \cneq N(D^b \Coh(\yY)). 
\end{align*}
The equivalence (\ref{Phi}) 
induces the isomorphism
\begin{align}\label{PhiN}
\Phi_{\ast} \colon N(\yY) \stackrel{\sim}{\to} N(X). 
\end{align}
Since $X$ is a smooth projective 
3-fold, it satisfies the Hodge conjecture. 
Together with Riemann-Roch theorem, 
the Chern character map from $K(X)$ 
descends to the injective homomorphism
\begin{align}\label{ch:inj}
\ch \colon N(X) \hookrightarrow H^{\ast}(X, \mathbb{Q}). 
\end{align}
In particular, both of $N(X)$ and $N(\yY)$ are 
finitely generated free abelian groups. 
The Euler paring (\ref{Euler}) on $N(X)$ is described as 
\begin{align}\notag
\chi(E, F)=&\sum_{j=0}^{3} (-1)^{j}\ch_{j}(E) \ch_{3-j}(F) \\
\label{Euler2}
&+\frac{c_2(X)}{12} \left(\ch_0(E)\ch_1(F)-\ch_1(E) \ch_0(F) \right).
\end{align}
We set
\begin{align*}
&N_{\le d}(\yY) \cneq \Imm (K(\Coh_{\le d}(\yY)) \to N(\yY)) \\
&N_{\le d}(X/Y) \cneq \Imm (K(\Coh_{\le d}(X/Y)) \to N(X)). 
\end{align*}
By the diagram (\ref{cohd}), we have the commutative diagram
\begin{align*}
\xymatrix{
\bigoplus_{j=0}^{2} \mathbb{Z}[S_j] \ar[d]_{\cong}^{[S_j] \mapsto [T_j]} 
\ar@{^{(}->}[r]^{\cong} &
N_0(\yY) \ar[d]_{\cong}^{\Phi_{\ast}}
 \ar@{^{(}->}[r] & N_{\le 1}(\yY) \ar[d]_{\cong}^{\Phi_{\ast}}
 \ar@{^{(}->}[r] & N(\yY) \ar[d]_{\cong}^{\Phi_{\ast}} \\
K(\mathbb{P}^2) \ar[r]^{i_{\ast}}_{\cong} \ar[d]^{\ch} &
N_0(X/Y) \ar@{^{(}->}[r] &  N_{\le 1}(X/Y) \ar[d]^{\ch} \ar@{^{(}->}[r] 
& N(X) \ar[d]^{\ch} \\
H^{\ast}(\mathbb{P}^2, \mathbb{Q}) \ar[rr]^{i_{\sharp}} &  &  
\mathbb{Q}[D] \oplus 
H^{\ge 4}(X, \mathbb{Q}) \ar@{^{(}->}[r] & H^{\ast}(X, \mathbb{Q}). 
}
\end{align*}
\begin{rmk}\label{rmk:K}
It is well-known that
$K(\mathbb{P}^2) \cong \mathbb{Z}^3$, and 
the Euler paring on it is perfect. 
This fact easily shows that the map
$i_{\ast} \colon K(\mathbb{P}^2) \to N_0(X/Y)$
is an isomorphism. 
\end{rmk}
In what follows, we fix the 
isomorphism
\begin{align}\label{isom:P}
\mathbb{Q}^{\oplus 3} \stackrel{\cong}{\to}
H^{\ast}(\mathbb{P}^2, \mathbb{Q}), \ 
(r, c, m) \mapsto (r, c\cdot h, m \cdot h^2)
\end{align}
where $h=c_1(\oO_{\mathbb{P}^2}(1))$, and write 
elements of $H^{\ast}(\mathbb{P}^2, \mathbb{Q})$
as $(r, c, m) \in \mathbb{Q}^3$ via (\ref{isom:P}). 
In this notation, the map $i_{\sharp}$ is given by
\begin{align}\label{iflat}
i_{\sharp}(r, c, m)=\left(0, r[D], \left(\frac{3}{2}r + c\right)[l], 
\frac{3}{2}r+ \frac{3}{2}c
+ m \right)
\end{align}
by the Grothendieck Riemann-Roch theorem.

\subsection{Seidel-Thomas twist}\label{subsec:ST}
The object $\oO_D \in \Coh(X)$ is a spherical object, i.e.
\begin{align*}
\Ext_{X}^{i}(\oO_D, \oO_D)=\left\{ \begin{array}{cl}
\mathbb{C}  &  i=0, 3 \\
0 & i \neq 0, 3. 
\end{array}  \right. 
\end{align*}
By~\cite{ST}, there is the associated autoequivalence
\begin{align*}
\mathrm{ST}_{\oO_D} \colon D^b \Coh(X) \stackrel{\sim}{\to} D^b \Coh(X)
\end{align*}
called \textit{Seidel-Thomas twist}. 
For any $E \in D^b \Coh(X)$, the 
object
$\mathrm{ST}_{\oO_D}(E)$ fits into 
the distinguished triangle
\begin{align}\label{ST}
\dR \Hom(\oO_D, E) \otimes \oO_D \to E \to \mathrm{ST}_{\oO_D}(E). 
\end{align}
We now assume that\footnote{It is easy to see that such a line bundle exists 
in Examples~\ref{exam1}, \ref{exam2}, \ref{exam3}.} 
there is a line 
bundle $\lL$ on $X$ satisfying
$i^{\ast}\lL \cong \oO_{\mathbb{P}^2}(1)$. 
The isomorphism (\ref{cong:OU}) for $k=1$
shows that 
the object
\begin{align}\label{Ldag}
\lL^{\dag} \cneq \Phi^{-1}(\lL)
\end{align}
is also a line bundle on $\yY$. 
We set
\begin{align}\label{Theta}
\Theta \cneq \mathrm{ST}_{\oO_D} \circ \otimes \lL \colon 
D^b \Coh(X) \stackrel{\sim}{\to} D^b \Coh(X).  
\end{align}
\begin{lem}\label{lem:Ldag}
The equivalence
\begin{align}\label{Theta+}
\Theta^{\dag} \cneq 
\Phi^{-1} \circ \Theta \circ \Phi \colon 
D^b \Coh(\yY) \to D^b \Coh(\yY)
\end{align}
restricts to the autoequivalence of $\Coh(\yY)$
given by $-_{\otimes \oO_{\yY}}\lL^{\dag}$.  
\end{lem}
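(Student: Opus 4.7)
The plan is to prove the stronger statement that $\Theta^{\dag}$ is naturally isomorphic to $-\otimes_{\oO_{\yY}}\lL^{\dag}$ as a triangulated autoequivalence of $D^b\Coh(\yY)$. Since $\lL^{\dag}$ is a line bundle on $\yY$ (applying the isomorphism (\ref{cong:OU}) at $k=1$ in the local model, globalized via (\ref{dia:global})), the functor $-\otimes \lL^{\dag}$ is $t$-exact on $D^b\Coh(\yY)$; so this stronger statement implies the lemma.

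My method is to verify the isomorphism on a set of generators for $D^b\Coh(\yY)$, namely $\{\oO_{\yY}, S_0, S_1, S_2\}$. These generate because locally, under derived McKay, they are the structure sheaf of $\mathbb{C}^3$ and the three simple $G$-equivariant skyscrapers at the origin. Using Lemma~\ref{lem:ST}, matching $\Theta^{\dag}$ with $-\otimes \lL^{\dag}$ on this class is equivalent to showing $\Theta(\oO_X)\cong \lL$ and $\Theta(T_j)\cong T_{j+1\bmod 3}$ for $j=0,1,2$, because locally $\lL^{\dag}$ corresponds to the weight $-1$ character $\rho_{-1}$ of $G$ and hence $S_j\otimes \lL^{\dag}\cong S_{j+1\bmod 3}$.

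Each of these isomorphisms is a direct unwinding of the defining triangle (\ref{ST}) of $\mathrm{ST}_{\oO_D}$. The computational input consists of: the decomposition $\dR\HOM_X(\oO_D, F)\cong F\oplus F\otimes \oO_D(-3)[-1]$ for $F\in \Coh(D)$, obtained from $0\to \oO_X(-D)\to \oO_X\to \oO_D\to 0$; the short exact sequences on $D\cong \mathbb{P}^2$ such as $0\to \Omega_D(1)\to \oO_D^3\to \oO_D(1)\to 0$ and $0\to \oO_D(-1)\to \oO_D^3\to \Omega_D(2)\to 0$; the Bott-type vanishings on $\mathbb{P}^2$ for $\oO(-2)$, $\Omega(-1)$, and $H^1(\Omega(2))$; and the sphericity of $\oO_D$ (used in the case $j=2$, where an octahedral argument is needed to pin down the Seidel-Thomas cone).

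To promote the object-level identifications to a natural isomorphism of functors, I would construct a natural transformation $\eta:-\otimes \lL^{\dag}\Rightarrow \Theta^{\dag}$ from the canonical triangle (\ref{ST}) applied to $\Phi(-)\otimes \lL$ (transported to $\yY$ via $\Phi^{-1}$), and argue $\eta$ is an isomorphism on the generating set above, hence on all of $D^b\Coh(\yY)$ by cone induction. The main obstacle is the computational identification in the case $j=1$, where one must recognize the cone of the evaluation $\oO_D^3\to \Omega_D(2)$ as $\oO_D(-1)[1]$ via the less familiar sequence $0\to \oO_D(-1)\to \oO_D^3\to \Omega_D(2)\to 0$; the remaining $j$ cases and the upgrade from object-level isomorphism to natural isomorphism of functors are standard once this case is handled.
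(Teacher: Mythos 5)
Your object-level computations are all correct: $\Theta(\oO_X)\cong\lL$ (since $\dR\Hom(\oO_D,\lL)=0$), $\Theta(T_0)\cong T_1$ via the Euler sequence, $\Theta(T_1)\cong T_2$ via $0\to\oO_D(-1)\to\oO_D^{\oplus 3}\to\Omega_D(2)\to 0$, and $\Theta(T_2)\cong T_0$ from sphericity. The gap is in the step that is supposed to turn these into the lemma: the set $\{\oO_{\yY},S_0,S_1,S_2\}$ does \emph{not} generate $D^b\Coh(\yY)$ as a triangulated (or thick) category. Globally, the classes of these four objects span a subgroup of rank at most $4$ in $N(\yY)\cong N(X)$, so no skyscraper at a non-stacky point, and no object whose numerical class lies outside that subgroup, is in their triangulated closure. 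The failure already occurs in the local model: in $D^b\Coh_G(\mathbb{C}^3)$ every object of the triangulated closure of $\oO_{\mathbb{C}^3}$ and the three skyscrapers at the origin has generic-fibre $G$-representation a multiple of the trivial character, so the closure misses $\oO_{\mathbb{C}^3}\otimes\rho_{-1}$ and everything supported away from the origin. Hence checking your transformation $\eta$ on these four objects says nothing about $\Theta^{\dag}$ on a general coherent sheaf, and in particular does not show $\Theta^{\dag}$ preserves the heart. There is also a circularity in the construction of $\eta$: the triangle (\ref{ST}) naturally produces a map $\Phi^{-1}(\Phi(-)\otimes\lL)\to\Theta^{\dag}(-)$, whose source is the functor $\Phi^{-1}\circ(\otimes\lL)\circ\Phi$, not $-\otimes\lL^{\dag}$; identifying those two functors is essentially what the lemma asserts.

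The paper avoids both problems by testing $\Theta$ on the three line bundles $\oO_U(k)$, $k=0,1,2$, rather than on the simples: a direct computation gives $\Theta(\oO_U(k))\cong\oO_U(k+1\bmod 3)$, hence $\Theta^{\dag}(\oO_{\mathbb{C}^3}\otimes\rho_{-j})\cong\oO_{\mathbb{C}^3}\otimes\rho_{-j-1}$, and since the $\oO_{\mathbb{C}^3}\otimes\rho_{-j}$ are local \emph{projective generators} of the abelian category $\Coh_G(\mathbb{C}^3)$ (every $G$-equivariant sheaf on affine space is a quotient of a direct sum of them), an exact functor agreeing with $\otimes\rho_{-1}$ on them agrees with it on everything. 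That Morita/tilting argument is the correct substitute for your ``cone induction.'' If you want to keep the simples-based computation, you would need to replace the generation claim by a genuine spanning-class or Fourier--Mukai kernel comparison (checking on all point-like objects of $\yY$, in families), which is substantially more work than the paper's route.
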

\begin{proof}
We have the local autoequivalence
\begin{align}\label{Theta:loc}
\Theta^{\dag}
\colon D^b \Coh_G(\mathbb{C}^3)
\to D^b \Coh_G(\mathbb{C}^3)
\end{align}
constructed in the same way of (\ref{Theta+}), 
replacing 
$\lL$ by $\oO_U(1)$. 
By the isomorphism (\ref{cong:OU}) for $k=1$, 
it is enough to prove 
that the functor (\ref{Theta:loc})
is isomorphic to tensoring  
$\oO_{\mathbb{C}^3}\otimes \rho_{-1}$. 
A direct computation easily shows that
\begin{align*}
\Theta(\oO_U)\cong \oO_U(1), \
\Theta(\oO_U(1)) \cong \oO_U(2), \ 
\Theta(\oO_U(2)) \cong \oO_U. 
\end{align*}
Here $\Theta=\mathrm{ST}_{\oO_{\mathbb{P}^2}} \circ \oO_U(1)$
by abuse of notation. 
This implies that 
\begin{align*}
\Theta^{\dag}(\oO_{\mathbb{C}^3} \otimes \rho_{-j})
\cong \oO_{\mathbb{C}^3} \otimes \rho_{-j-1}
\end{align*}
 for $j\in \mathbb{Z}/3\mathbb{Z}$. 
Hence the functor (\ref{Theta:loc}) is isomorphic to 
tensoring $\oO_{\mathbb{C}^3}\otimes \rho_{-1}$
on the objects $\oO_{\mathbb{C}^3} \otimes \rho_{-j}$. 
Since the objects $\oO_{\mathbb{C}^3} \otimes \rho_{-j}$
for $0\le j\le 2$ 
are local projective generators of 
$\Coh_G(\mathbb{C}^3)$, we obtain the result. 
\end{proof}

Let $\Coh_{\le d}(X/Y)$ be the 
abelian category defined by (\ref{Coh1}). 
By Lemma~\ref{lem:Ldag}, we have the commutative diagram
\begin{align}\label{cohd2}
\xymatrix{
\Coh_{\le d}(\yY) \ar[d]^{\otimes \lL^{\dag}} \ar@{^{(}->}[r] & D^b \Coh_{\le d}(\yY)
 \ar[d]^{\otimes \lL^{\dag}}
 \ar[r]^{\Phi} & D^b \Coh_{\le d}(X/Y) \ar[d]^{\Theta} \\
\Coh_{\le d}(\yY) \ar@{^{(}->}[r] &  D^b \Coh_{\le d}(\yY)
\ar[r]^{\Phi} 
& D^b \Coh_{\le d}(X/Y)
}
\end{align}
such that each vertical arrows are equivalences. 
The equivalence (\ref{Theta}) also induces the 
commutative diagram
\begin{align}\label{cohd3}
\xymatrix{
N_{\le 1}(X/Y) \ar[r]^{\Theta_{\ast}} \ar[d]_{\ch}
 & N_{\le 1}(X/Y) \ar[d]^{\ch} \\
\mathbb{Q}[D] \oplus 
H^{\ge 4}(X, \mathbb{Q}) \ar[r]^{\Theta_{\sharp}} &
\mathbb{Q}[D] \oplus H^{\ge 4}(X, \mathbb{Q}). 
}
\end{align}
Using the triangle
 (\ref{ST}), the linear isomorphism $\Theta_{\sharp}$
is calculated as   
\begin{align}\notag
\Theta_{\sharp}(r[D], \beta, n)
=&\left(\left(\frac{5}{2}r+\beta D \right)[D], 
\beta+\left(\frac{13}{4}r+\frac{3}{2}\beta D \right)[l], \right. \\ 
\label{dia:Theta}
&\hspace{40mm}
\left. n +\frac{11}{4}r +\frac{3}{2}\beta D + c_1(\lL) \beta \right). 
\end{align}

\section{Stable pairs and generalized DT invariants}\label{sec:Stable}
In this section, we recall stable pairs, 
generalized DT invariants, and their properties. 
\subsection{Stable pair invariants}
First we recall the definition of 
stable pairs on a Calabi-Yau 3-fold $X$
introduced by Pandharipande-Thomas~\cite{PT}. 
\begin{defi}\emph{(\cite{PT})}
A stable pair on $X$
consists of data
\begin{align}\label{SP}
s\colon \oO_X \to F
\end{align}
where $F \in \Coh_{\le 1}(X)$ is pure one dimensional, 
and $\Cok(s) \in \Coh_0(X)$. 
\end{defi}

For $n \in \mathbb{Z}$ and $\beta \in H_2(X, \mathbb{Z})$, 
let
\begin{align*}
P_n(X, \beta)
\end{align*}
be
the moduli space of stable pairs
(\ref{SP}) with $[F]=\beta$\footnote{If we write 
$[F]=\beta$, it means that 
the fundamental homology class determined by $F$
equals to $\beta$. By abuse of notation, 
we also use the notation $[F]$ for the
class of $F$ in the numerical Grothendieck group.}
 and $\chi(F)=n$. 
The moduli space $P_n(X, \beta)$ is 
a projective scheme with a symmetric perfect
obstruction theory. 
It is also
regarded as a moduli space of two term complexes 
$(\oO_X \stackrel{s}{\to} F)$, 
satisfying the condition
\begin{align*}
\ch(\oO_X \to F)=(1, 0, -\beta, -n). 
\end{align*}
Here $\oO_X$ is located in degree zero, 
and $H_2(X)$ is 
identified with $H^4(X)$ by Poincar\'e duality. 
Let $\nu$ be the Behrend constructible function~\cite{Beh}
on $P_n(X, \beta)$. 
The stable pair invariant $P_{n, \beta}(X)$ is defined by
\begin{align*}
P_{n, \beta}(X) \cneq \int_{P_n(X, \beta)} \nu \ d\chi.  
\end{align*}
Here for a constructible function $\nu$ on a variety $M$, we
define
\begin{align*}
\int_{M} \nu \ d\chi \cneq \sum_{m\in \mathbb{Z}}
m \cdot \chi(\nu^{-1}(m)). 
\end{align*}

\begin{rmk}\label{rmk:Ng}
By the injectivity of (\ref{ch:inj}), 
the map
\begin{align}\label{NH2}
N_{\le 1}(X) \to H_2(X, \mathbb{Z}) \oplus \mathbb{Z} 
\end{align}
sending $F$ to $([F], \chi(F))$ is injective.
Therefore if $P_n(X, \beta) \neq \emptyset$, 
we may write it as $P(X, \alpha)$, and 
$P_{n, \beta}(X)$ as $P_{\alpha}(X)$, for $\alpha \in N_{\le 1}(X)$
corresponding to $(\beta, n)$ under (\ref{NH2}). 
\end{rmk}

\begin{rmk}\label{rmk:supportD}
Suppose that $X$ contains a divisor $D \cong \mathbb{P}^2$, 
$\beta=c[l]$ for $c>0$ and a line $l\subset D$. 
Then then for any stable pair $(\oO_X \to F) \in P_n(X, c[l])$, 
the sheaf $F$ is supported on $D$. 
Since the formal neighborhood of $D \subset X$
and the zero section 
$\mathbb{P}^2 \subset \omega_{\mathbb{P}^2}$
are isomorphic, the invariant 
$P_{n, c[l]}(X)$ coincides with the stable pair invariant on 
$\omega_{\mathbb{P}^2}$. 
\end{rmk}

It is straightforward to generalize the notion of stable pairs 
to a CY3 orbifold $\yY$. 
\begin{defi}
An orbifold stable pair on $\yY$
consists of data
\begin{align}\label{STY}
s \colon \oO_{\yY} \to F
\end{align}
where $F \in \Coh_{\le 1}(\yY)$ is pure, 
i.e. $\Hom(\Coh_0(\yY), F)=0$, 
and $\Cok(s) \in \Coh_0(\yY)$. 
\end{defi}
Similarly to Remark~\ref{rmk:Ng},
we denote by 
\begin{align*}
P(\yY, \gamma)
\end{align*}
for $\gamma \in N_{\le 1}(\yY)$
the 
moduli space of orbifold stable pairs (\ref{STY})
 satisfying 
$[F]=\gamma$ in $N_{\le 1}(\yY)$. 
In this paper, we don't purse 
the foundation of orbifold stable pair moduli spaces, 
e.g. GIT construction of $P(\yY, \gamma)$. 
At least we have the following lemma, 
which is enough for our purpose. 
\begin{lem}
The moduli space $P(\yY, \gamma)$ is 
a finite type open sub algebraic space of 
the moduli space of simple objects in $D^b \Coh(\yY)$. 
\end{lem}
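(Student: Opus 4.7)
The plan is to realize $P(\yY, \gamma)$ as an open sub algebraic space inside the moduli space of simple objects in $D^b \Coh(\yY)$ (which exists as an algebraic space by Inaba's theorem, extended to smooth projective Deligne--Mumford stacks by Lieblich), by associating to an orbifold stable pair $s \colon \oO_{\yY} \to F$ the two term complex $I^{\bullet} \cneq (\oO_{\yY} \stackrel{s}{\to} F)$ with $\oO_{\yY}$ placed in degree zero. Three things then need to be checked: simplicity of $I^{\bullet}$, openness of the stable pair locus inside the moduli of simple complexes, and boundedness for fixed $\gamma$.

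For simplicity, I would adapt the Pandharipande--Thomas argument to the orbifold setting. The vanishing $\Hom(F, \oO_{\yY}) = 0$ holds because $F$ is at most one dimensional while $\oO_{\yY}$ is locally free, so any $\phi \in \Hom(I^{\bullet}, I^{\bullet})$ lifts to a genuine chain map $(a, b)$ with $a \in \End(\oO_{\yY}) = \mathbb{C}$ and $b \in \End(F)$ satisfying $b \circ s = s \cdot a$. Since $\Cok(s) \in \Coh_0(\yY)$, the morphism $b$ agrees with $a \cdot \id_F$ away from a zero dimensional substack of $\Supp(F)$, and purity of $F$ then forces $b = a \cdot \id_F$ globally. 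This gives $\End(I^{\bullet}) = \mathbb{C}$, hence $I^{\bullet}$ is simple.

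For openness, I would note that each defining condition of a stable pair is open in a flat family of simple complexes: the condition $\hH^0(I^{\bullet}) \cong \oO_{\yY}$ together with the identification of the structural map is open by standard deformation theory for perfect complexes, purity of $\hH^1(I^{\bullet}) = F$ of dimension $\le 1$ is open, and the condition that the induced morphism $\oO_{\yY} \to \hH^1(I^{\bullet})$ has zero dimensional cokernel is also open. Finite type follows from the fact that pure one dimensional coherent sheaves $F$ on $\yY$ with fixed class $[F] = \gamma \in N_{\le 1}(\yY)$ form a bounded family (Gieseker type boundedness on projective DM stacks, e.g.\ Nironi), together with the observation that the length of $\Cok(s)$ is bounded by $\chi(F) - h^0(\oO_{\yY})$, which is controlled by $\gamma$.

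The main technical obstacle is the simplicity verification, since $F$ may carry nontrivial stabilizer representations at stacky points and one must argue purity in the presence of torsion by finite groups; once this local analysis is carried out carefully, the reduction to Inaba--Lieblich together with the openness and boundedness steps is essentially formal.
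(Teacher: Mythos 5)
Your proposal is correct in outline, but it takes a genuinely different route from the paper. The paper's proof is a one-line deduction from Proposition~\ref{prop:fund} and Proposition~\ref{prop:PY}: under the derived equivalence $\Phi$, the moduli space of simple objects in $D^b \Coh(\yY)$ is identified with the corresponding moduli space on the variety $X$ (where Lieblich's theorem applies directly), and $P(\yY, \gamma)$ is identified with the moduli space $M_t(1, -\Phi_{\ast}\gamma)$ of rank one $Z_t$-stable objects in $\aA_{X/Y}$ for $0 < t \ll 1$; openness and finite type are then inherited from the corresponding statements for $\mM_t(1,\alpha)$, whose proofs (the filtrations of Lemma~\ref{lem:filt}, the Bogomolov inequality on $\mathbb{P}^2$) all take place on $X$. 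You instead work directly on the stack $\yY$, adapting the Pandharipande--Thomas simplicity argument, openness of the pair conditions, and boundedness. This is viable, but it requires exactly the orbifold foundations that the paper explicitly declines to develop (``we don't pursue the foundation of orbifold stable pair moduli spaces''): existence of the moduli of simple complexes on a projective Deligne--Mumford stack (Lieblich's theorem is stated for proper morphisms of algebraic spaces, so one needs its stacky extension or Inaba's analogue), and Nironi-type boundedness for pure sheaves on $\yY$. The trade-off is clear: your argument is self-contained and does not need the weak stability machinery of Section~4, while the paper's argument gets the lemma for free from results it must prove anyway, at the cost of a forward reference.

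Two small points in your sketch deserve care. First, the claim that the length of $\Cok(s)$ is bounded by $\chi(F) - h^0(\oO_{\yY})$ is not quite right as stated; the correct boundedness argument runs through the exact sequence $0 \to \im(s) \to F \to \Cok(s) \to 0$, boundedness of the family of pure one dimensional substacks with fixed class (so that $\chi(\im(s))$ takes finitely many values), and then boundedness of extensions. Second, your worry about stabilizer representations at stacky points in the simplicity step is not an issue: $(b - a \cdot \id_F)\circ s = 0$ forces $b - a\cdot \id_F$ to factor through the zero dimensional sheaf $\Cok(s)$, so its image is a zero dimensional subsheaf of the pure sheaf $F$, hence zero, exactly as in the non-stacky case.
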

\begin{proof}
The result is an immediate consequence of 
Proposition~\ref{prop:fund} and Proposition~\ref{prop:PY} proven later. 
\end{proof}
Using the Behrend function $\nu$ on $P(\yY, \gamma)$, 
the orbifold stable pair invariant is defined by 
\begin{align}\label{PgY}
P_{\gamma}(\yY) \cneq \int_{P(\yY, \gamma)} \nu \ d\chi. 
\end{align}
Note that $P_{\gamma}(\yY)=0$ for $0\neq \gamma \in N_0(\yY)$
by the definition of orbifold stable pairs. 

\subsection{Generalized DT invariants}\label{subsec:genDT}
We recall the 
construction of generalized DT invariants on a 
Calabi-Yau 3-fold $X$, following~\cite{JS}. 
Let $\cC oh(X)$ be the moduli 
stack of all the objects in $\Coh(X)$. 
The stack theoretic Hall algebra $H(X)$ is 
$\mathbb{Q}$-spanned by the isomorphism 
classes of the symbols 
\begin{align}\notag
[\rho \colon \xX \to \cC oh(X)]
\end{align}
where $\xX$ is an Artin stack of finite type over
$\mathbb{C}$ with 
affine geometric stabilizers and
$\rho$ is a 1-morphism. 
The relation is generated by 
\begin{align}\label{Hall:rel}
[\rho \colon \xX \to \cC oh(X)]
\sim [\rho|_{\yY} \colon \yY \to \cC oh(X)]
+ [\rho|_{\uU} \colon \uU \to \cC oh(X)]
\end{align}
where $\yY \subset \xX$ is a closed substack and 
$\uU\cneq \xX \setminus \yY$. 
There is an associative $\ast$-product
on $H(X)$
based on the Ringel-Hall algebras (cf.~\cite[Section~5.1]{Joy2}).
The unit is given by 
\begin{align*}
1=[\Spec \mathbb{C} \to \cC oh(X)]
\end{align*}
which corresponds to $0\in \Coh(X)$. 
Also there is a Lie subalgebra 
\begin{align*}
H^{\rm{Lie}}(X) \subset H(X)
\end{align*}
consisting of elements 
supported on \textit{virtual indecomposable 
objects}. 
We refer to~\cite[Section~5.2]{Joy2} for the detail
of the definition of $H^{\rm{Lie}}(X)$. 

Let $C(X)$ be the Lie algebra
\begin{align*}
C(X) \cneq \bigoplus_{v \in N(X)}
\mathbb{Q} \cdot c_{v}
\end{align*}
with bracket given by 
\begin{align}\label{bracket}
[c_{v_1}, c_{v_2}] \cneq (-1)^{\chi(v_1, v_2)}
\chi(v_1, v_2) c_{v_1+v_2}. 
\end{align}
 By~\cite[Theorem~5.12]{JS}, there is a Lie algebra 
homomorphism
\begin{align}\label{PiHall}
\Pi \colon H^{\rm{Lie}}(X) \to C(X)
\end{align}
such that if $\xX$ is a $\mathbb{C}^{\ast}$-gerbe over an 
algebraic space $\xX'$, we have 
\begin{align}\notag
\Pi([\rho \colon \xX \to \cC oh_{v}(X)])
=-\left(\sum_{k\in \mathbb{Z}}
k \cdot \chi(\nu^{-1}(k))  \right) c_{v}.
\end{align}
Here $\cC oh_v(X)$ is the stack of sheaves 
with numerical class $v$, $\rho$ is an open immersion
and 
$\nu$ is  
Behrend's constructible function
on $\xX'$. 

Let $H$ be an ample divisor on $X$. 
For $v \in N(X)$, 
let\footnote{We omit $H$ in the 
subscript of the moduli spaces and invariants, 
as its choice does not matter in the situation 
of Subsection~\ref{subsec:DTP2}.} 
\begin{align}\label{stack}
\mM^{s(ss)}(v) \subset \cC oh(X)
\end{align}
be the open substack of
$H$-Gieseker (semi)stable 
sheaves $E \in \Coh(X)$ satisfying
$[E]=v$. 
The stack (\ref{stack}) determines the element
\begin{align}\notag
\delta(v) \cneq [\mM^{ss}(v) \subset 
\cC oh(X)] \in H(X). 
\end{align}
The above element also defines the element of $H^{\rm{Lie}}(X)$: 
\begin{align}\notag
\epsilon(v)
\cneq \sum_{\begin{subarray}{c}
k\ge 1, v_1+ \cdots + v_k=v \\
p(v_i)=p(v)
\end{subarray}}
\frac{(-1)^{k-1}}{k}
\delta(v_1) \ast \cdots \ast \delta(v_k).
\end{align}
Here 
$p(v)$ is the reduced Hilbert polynomial 
of a sheaf $E$ with $[E]=v$. 
\begin{defi}
The generalized DT invariant
$\DT(v) \in \mathbb{Q}$
is defined
by the formula:
\begin{align}\notag
\Pi(\epsilon(v))=-\DT(v) \cdot c_{v}. 
\end{align}
\end{defi}
\begin{rmk}\label{rmk:s=ss}
If $\mM^{s}(v)=\mM^{ss}(v)$, 
then they are $\mathbb{C}^{\ast}$-gerbe over a
quasi-projective scheme $M^{s}(v)$. 
In this case, the invariant $\DT(v)$
is written as
\begin{align*}
\DT(v)=\int_{M^{s}(v)} \nu \ 
d\chi
\end{align*}
where $\nu$ is the Behrend function on $M^{s}(v)$. 
\end{rmk}
It has been expected that 
the above arguments
are generalized to the derived category
setting. 
Namely, 
we expect that we can replace
the category of coherent sheaves by 
other heart of t-structure $\aA \subset D^b \Coh(X)$, 
and Gieseker stability by Bridgeland stability~\cite{Brs1}
or weak stability~\cite{Tcurve1}. 
The arguments are almost parallel, except 
that we need 
one technical result on the 
local description of the moduli stack of 
objects in the derived category, 
proven for coherent sheaves in~\cite[Theorem~5.3]{JS}. 
Conjecture~\ref{conj:critical} in
the introduction is required to show
the existence of a Lie algebra homomorphism (\ref{PiHall})
in the derived category setting.

\subsection{Generalized DT invariants on local $\mathbb{P}^2$}
\label{subsec:DTP2}
The construction of generalized 
DT invariants 
is also applied to the non-compact
Calabi-Yau 3-fold 
$U=\omega_{\mathbb{P}^2}$.
Let 
$\Coh_{\rm{c}}(U)$
be the category of 
coherent sheaves on $U$ with 
compact supports, 
and set
\begin{align*}
\Lambda \cneq \Imm (\ch \colon \Coh(\mathbb{P}^2) \setminus \{0\}
\to H^{\ast}(\mathbb{P}^2, \mathbb{Q})). 
\end{align*}
Under the isomorphism (\ref{isom:P}), 
 we have
\begin{align*}
\Lambda \subset \left\{ (r, c, m) \in \mathbb{Z}^{2}
\oplus \frac{1}{2}\mathbb{Z} : 
\begin{array}{l}
r>0 \mbox{ or } \\
r=0, c>0 \mbox{ or } \\
r=c=0, m>0
\end{array}  \right\}. 
\end{align*}
By replacing $\Coh(X)$ by 
$\Coh_{\rm{c}}(U)$ in Subsection~\ref{subsec:genDT}, 
we obtain the invariant
\begin{align*}
\DT(r, c, m) \in \mathbb{Q}
\end{align*}
which counts 
Gieseker semistable 
sheaves\footnote{Indeed
such sheaves are scheme theoretically supported
on the zero section of $\pi$, 
if $r>0$ or $r=0$, $c>0$. 
See~\cite[Lemma~2.3]{TodS3}}
$F \in \Coh_{\rm{c}}(U)$
satisfying
\begin{align*}
\ch(\pi_{\ast}F)=(r, c, m). 
\end{align*} 
Here $(r, c, m) \in \Lambda$, and 
$\pi$ is the projection (\ref{pi}).
We also define $\DT(r, c, m)$ for 
elements $(r, c, m) \notin \Lambda$
by
\begin{align*}
\DT(r, c, m) \cneq \left\{ \begin{array}{cc}
\DT(-r, -c, -m) & (r, c, m) \in -\Lambda \\
0 & \pm (r, c, m) \notin \Lambda. 
\end{array} \right. 
\end{align*}
\begin{rmk}
If $(r, c, m) \in -\Lambda$, 
then $\DT(r, c, m)$ counts
objects $F[1] \in D^b(\Coh_0(U))$ for
semistable sheaves $F \in \Coh_0(U)$ with 
$\ch(\pi_{\ast}F)=-(r, c, m)$. 
\end{rmk}
\begin{rmk}\label{rmk:Bog}
By Bogomolov inequality, the invariant 
$\DT(r, c, m)$ is non-zero only if 
$c^2 \ge 2rm$. 
\end{rmk}
\begin{rmk}\label{rmk:coin}
If $r>0$, the invariant $\DT(r, c, m)$ 
coincides with the one after replacing the Gieseker 
stability by slope stability. 
See~\cite[Lemma~2.10]{TodS3}.
\end{rmk}
\begin{rmk}
For a Gieseker semistable sheaf $F \in \Coh_{\rm{c}}(U)$, 
the sheaf $F \otimes \oO_U(\pm 1)$ is also Gieseker semistable. 
This implies
\begin{align}\label{eDT}
\DT(r, c, m)=\DT(r, c+r, m+c+r/2). 
\end{align}
\end{rmk}
We also have the following lemma: 
\begin{lem}\label{lem:DT:dual}
We have the equality
\begin{align}\label{DT:dual}
\DT(r, c, m)=\DT(r, -c, m). 
\end{align}
\end{lem}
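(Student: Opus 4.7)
The plan is to produce an anti-autoequivalence of $D^b\Coh_{\rm{c}}(U)$ that preserves semistability and whose action on numerical classes approximates the desired symmetry $(r,c,m)\mapsto(r,-c,m)$, then correct the discrepancy by applying the twist identity~(\ref{eDT}) three times.

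I would introduce the derived dualizing functor
$$\mathbb{D}(-)\cneq\dR\HOM_U(-,\oO_U)[2],$$
which, since $U$ is a smooth Calabi-Yau 3-fold, restricts to an anti-equivalence of $D^b\Coh_{\rm{c}}(U)$. Being contravariant and exact, $\mathbb{D}$ reverses Harder-Narasimhan filtrations; in the slope-semistability regime, which by Remark~\ref{rmk:coin} computes $\DT(r,c,m)$ for $r>0$, it therefore carries slope-semistable sheaves to slope-semistable sheaves up to homological shift. Combining this with the Behrend-function identity $\nu_F=\nu_{\mathbb{D}(F)}$, a standard feature of derived duality on CY3-folds, and the Joyce-Song formalism of Subsection~\ref{subsec:genDT}, would yield the identity $\DT(v)=\DT(\mathbb{D}_{\ast}v)$ on classes exchanged by $\mathbb{D}$.

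For $r>0$, by \cite[Lemma~2.3]{TodS3} any semistable sheaf of class $(r,c,m)$ has the form $F=i_{\ast}G$ with $G\in\Coh(\mathbb{P}^2)$ and $i\colon\mathbb{P}^2\hookrightarrow U$ the zero section. Using $i^{!}\oO_U\cong\oO_{\mathbb{P}^2}(-3)[-1]$, Grothendieck duality gives
$$\mathbb{D}(i_{\ast}G)\cong i_{\ast}\dR\HOM_{\mathbb{P}^2}(G,\oO_{\mathbb{P}^2}(-3))[1],$$
and a direct Riemann-Roch computation, together with the convention $\DT(v)=\DT(-v)$ for $v\in-\Lambda$, produces
$$\DT(r,c,m)=\DT\!\bigl(r,\,-3r-c,\,\tfrac{9}{2}r+3c+m\bigr).$$
Three iterated applications of~(\ref{eDT}) then collapse the right-hand side to $\DT(r,-c,m)$, completing the argument when $r>0$. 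The cases $r=0,\,c>0$ and $r=c=0$ should follow by the analogous 1-dimensional duality $F\mapsto\mathcal{E}xt^{1}_{\mathbb{P}^2}(F,\omega_{\mathbb{P}^2})$ and by inspection of the convention on $-\Lambda$, respectively.

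The main obstacle is verifying that $\mathbb{D}$ genuinely descends to the Joyce-Song construction, i.e.\ producing an isomorphism at the stack level between moduli of semistable objects of classes $v$ and $\mathbb{D}_{\ast}v$ (possibly after shift), together with sign-equivariance for the Lie-algebra map $\Pi$ of~(\ref{PiHall}). This is standard for coherent sheaves but requires care when tracing through the dualization into the derived setting.
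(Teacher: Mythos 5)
Your overall strategy --- flip the sign of $c$ with a dualizing-type functor and clean up with (\ref{eDT}) --- is the same as the paper's, and your Riemann--Roch arithmetic checks out: the paper uses $\Psi(F)=\dR\hH om_U(F,\oO_U(3))[1]$, which by Grothendieck duality lands directly on $(r,-c,m)$ and so avoids your three applications of (\ref{eDT}), but that is only a cosmetic difference. The $r=0$, $c>0$ case via $F\mapsto \eE xt^1_{\mathbb{P}^2}(F,\oO_{\mathbb{P}^2})$ is also exactly what the paper does, and there the duality genuinely is an isomorphism of moduli stacks because purity of one-dimensional sheaves kills the higher $\eE xt$'s.

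The gap is in the $r>0$ case: the claim that $\mathbb{D}$ ``carries slope-semistable sheaves to slope-semistable sheaves up to homological shift,'' and the resulting hope for ``an isomorphism at the stack level between moduli of semistable objects of classes $v$ and $\mathbb{D}_{\ast}v$,'' are false. For $F=i_{\ast}G$ with $G$ torsion-free but not locally free, $\dR\HOM_{\mathbb{P}^2}(G,\oO_{\mathbb{P}^2}(-3))$ has a nonzero $\eE xt^1$, a zero-dimensional sheaf supported on the non-locally-free locus, so $\mathbb{D}(F)$ is an honest two-term complex rather than a shifted sheaf; moreover Gieseker semistability is not preserved, since $\ch_2$ is shifted by the length of this zero-dimensional piece. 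Consequently no stack-isomorphism or Behrend-function argument can close the proof directly. The paper's actual resolution is a Hall-algebra computation: it introduces the extension closure $\bB_{\mu}=\langle \mM_{\mu},\Coh_0(U)[-1]\rangle_{\rm ex}$, which \emph{is} preserved by the duality (it sends $\bB_{-\mu}$ to $\bB_{\mu}$), rewrites $1+\delta_{\bB_{\mu}}=(1+\delta_{\mM_{\mu}})\ast(1+\delta_0)=(1+\delta_0)\ast\Psi_{\ast}(1+\delta_{\mM_{-\mu}})$, and deduces via the Baker--Campbell--Hausdorff formula that $\Psi_{\ast}\epsilon_{\mM_{-\mu}}$ differs from $\epsilon_{\mM_{\mu}}$ only by multiple commutators of $\epsilon_{\mM_{\mu}}$ and $\epsilon_0$; these vanish under the integration map because $\chi(F,F')=0$ for all $F,F'\in\bB_{\mu}$. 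Your proof needs this ``integrate out the zero-dimensional corrections'' step (or an equivalent one) to go through.
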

\begin{proof}
If $r=0$ and $c>0$, then the result follows since 
$F \mapsto \eE xt_{\mathbb{P}^2}^1(F, \oO_{\mathbb{P}^2})$
gives an isomorphism 
between $\mM^{ss}(0, c, m)$ and $\mM^{ss}(0, c, -m)$. 
Hence we may assume that $r>0$. 
Let us set $\mu=c/r$ and
\footnote{The slope function on $F \in \Coh_{\rm{c}}(U)$
is defined by that of $\pi_{\ast}F$. 
See Subsection~\ref{subsec:tiltcoh}.}
\begin{align*}
\bB_{\mu} \cneq \left\langle 
F, \Coh_0(U)[-1] : 
\begin{array}{l}
F \in \Coh_{\rm{c}}(U)
\mbox{ is slope } \\
\mbox{semistable with slope }
\mu \end{array}
\right\rangle_{\rm{ex}}. 
\end{align*}
Let $H(U)$ be the Hall algebra of $\Coh_{\rm{c}}(U)$. 
Similarly to Subsection~\ref{subsec:genDT}, 
the moduli stacks of objects in $\bB_{\mu}$, 
slope semistable sheaves
in $\Coh_{\rm{c}}(U)$
with slope $\mu$, objects in $\Coh_0(U)[-1]$
determine 
elements 
\begin{align*}
\delta_{\bB_{\mu}},
\delta_{\mM_{\mu}}, 
\delta_0 \in \widehat{H}(U)
\end{align*}
respectively. 
Here $\widehat{H}(U)$ is a suitable completion of $H(U)$. 
By the definition of $\bB_{\mu}$, we have the following relation
in $\widehat{H}(U)$
\begin{align}\label{rel:delta}
(1+\delta_{\bB_{\mu}})=(1+\delta_{\mM_{\mu}}) \ast (1+\delta_{0}). 
\end{align}
Let $\Psi$ be the autoequivalence of 
$D^b \Coh_{\rm{c}}(U)$, given by 
\begin{align*}
\Psi(F)= \dR \hH om_{U}(F, \oO_U(3))[1].
\end{align*}
Then an object $F \in D^b \Coh_{\rm{c}}(U)$
satisfies 
$\ch(\pi_{\ast}F)=(r, c, m)$
if and only if $\ch(\pi_{\ast}\Psi(F))=(r, -c, m)$. 
Moreover the proof of~\cite[Lemma~9.1]{TodK3}
shows that $\Psi$ restricts to the
equivalence between 
$\bB_{-\mu}$ and $\bB_{\mu}$. 
Replacing $\mu$ by $-\mu$ in (\ref{rel:delta}) and applying 
$\Psi_{\ast}$, we obtain  
the following relation
\begin{align}\notag
1+\delta_{\bB_{\mu}} &=\Psi_{\ast}(1+\delta_{\bB_{-\mu}}) \\
\label{rel:Psi}
&=(1+\delta_0) \ast \Psi_{\ast}(1+\delta_{\mM_{-\mu}}). 
\end{align}
We set $\epsilon_{\mM_{\mu}}=\log(1+ \delta_{\mM_{\mu}})$
and $\epsilon_{0}=\log (1+\delta_{0})$. 
The relations (\ref{rel:delta}), (\ref{rel:Psi})
imply that
\begin{align*}
\Psi_{\ast}\epsilon_{\mM_{-\mu}}=\log
\left(\exp(-\epsilon_0) \ast \exp(\epsilon_{\mM_{\mu}})
\ast \exp(\epsilon_0)   \right). 
\end{align*}
By the Baker-Campbell-Hausdorff formula, 
we obtain 
\begin{align}\label{commu}
\Psi_{\ast}\epsilon_{\mM_{-\mu}}
=\epsilon_{\mM_{\mu}}+
\left( \begin{array}{c}
\mbox{ multiple commutators } \\
\mbox{ of }
\epsilon_{\mM_{\mu}} 
\mbox{ and }
\epsilon_{0}
\end{array} \right).
\end{align}
Because $\chi(F, F')=0$ for 
any $F, F' \in \bB_{\mu}$, 
the multiple commutator parts of 
(\ref{commu}) vanish after applying the integration
map (\ref{PiHall}) for $H(U)$. 
By Remark~\ref{rmk:coin}, 
 we obtain the desired identity (\ref{DT:dual}). 
\end{proof}

\subsection{The invariants $\DT(r, c, m)$ for $r>0$}
In this subsection, we review the work of~\cite{TodS3}
\footnote{More recently Manschot~\cite{Man3}
described a closed formula of the Betti numbers of 
moduli spaces of semistable sheaves on $\mathbb{P}^2$.}
on the invariants $\DT(r, c, m)$ with $r>0$. 
For $r>0$, we define the following generating series:  
\begin{align}\label{intro:gen}
\DT(r, c) \cneq \sum_{m}
\DT(r, c, m)(-q^{\frac{1}{2r}})^{c^2 -2rm}. 
\end{align}
If $(r, c)$ is coprime, then
the series (\ref{intro:gen}) is the 
generating series 
of Euler numbers of moduli spaces of stable 
sheaves on $\mathbb{P}^2$
studied in the context of 
Vafa-Witten's S-duality conjecture~\cite{VW}. 
We give some examples of the series (\ref{intro:gen}). 
\begin{exam}
If $r=1$, the work of G\"ottsche~\cite{Got} on Hilbert schemes
of points yields
\begin{align}\label{DT(1,s)}
\DT(1, c)=q^{\frac{\chi(S)}{24}}\eta(q)^{-\chi(S)}. 
\end{align} 
Here $\eta(q)=q^{1/24}\prod_{k\ge 1}(1-q^k)$ is the Dedekind eta function.
\end{exam}
\begin{exam}
The case of $r=2$ and $c=1$
has been studied in several articles~\cite{Kly}, \cite{YosB}, \cite{Yo1}, 
\cite{GoTheta}, \cite{Man2}, 
\cite{BrMan}, \cite{Kool}. 
From these articles, one can show that
\begin{align}\notag
\DT(2, 1)=q^{\frac{1}{4}}\eta(q)^{-6} 
\frac{1}{\sum_{k\in \mathbb{Z}}q^{k^2}} 
\sum_{\begin{subarray}{c}
(a, b) 
\in (0, 1/2)+\mathbb{Z}^2 \\
a>b>0 
\end{subarray}}
(2a-6b)q^{a^2-b^2}. 
\end{align} 
\end{exam}
The result of~\cite[Proposition~3.6]{TodK3} shows that
the series $\DT(r, c)$ satisfies a certain 
recursion formula in terms of modular forms and 
theta type series for indefinite lattices. 
In the notation of~\cite[Proposition~3.6]{TodK3}, 
the recursion formula is
\begin{align}\notag
\DT(r, c)=&\sum_{\begin{subarray}{c}
k\ge 2, \ r_1, \cdots, r_k \in \mathbb{Z}_{\ge 1} \\
r_1+ \cdots + r_k=r
\end{subarray}}
\sum_{\begin{subarray}{c}
\overline{\beta}_i =(c_i, a_i) \in \NS_{<r_i}(\widehat{\mathbb{P}}^2) \\
1\le i\le k
\end{subarray}}
\sum_{G \in G(k)} \frac{(-1)^{k}}{2^{k-1}} \\
\label{recursion}
&\cdot 
U_{(r_1, \overline{\beta}_1), \cdots, (r_k, \overline{\beta}_k)}^{c, G}(q) 
\cdot
\vartheta_{r, 1-c}(q)^{-1}
\prod_{i=1}^{k} \vartheta_{r_i, a_i}(q) \cdot
\prod_{i=1}^{k} \DT(r_i, c_i). 
\end{align}
We briefly explain 
the notation used in (\ref{recursion}).
Let 
\begin{align*}
\rho \colon \widehat{\mathbb{P}}^2 \to \mathbb{P}^2
\end{align*}
be the blow-up at a point
and $C$ the exceptional locus of $\rho$. 
We identity $\mathbb{Z}^2$ with $\NS(\widehat{\mathbb{P}}^2)$ via 
$(x, y) \mapsto x \cdot \rho^{\ast}h + y \cdot [C]$. 
The set $\NS_{<r}(\widehat{\mathbb{P}}^2)$ is a finite 
subset of $\NS(\widehat{\mathbb{P}}^2)$
given by 
\begin{align*}
\NS_{<r}(\widehat{\mathbb{P}}^2)\cneq \{(x, y)\in \mathbb{Z}^2 : 
0\le x<r, \ 0\le y <r\}. 
\end{align*}
The set $G(k)$ is the finite set of certain oriented 
graphs, defined by
\begin{align}\label{def:G(k)}
G(k) \cneq \left\{ \begin{array}{l}
\mbox{ connected and simply connected oriented graphs } \\
\mbox{ with vertex } \{1, 2, \cdots, k\}, i\to j
\mbox{ implies } i<j 
\end{array} \right\}. 
\end{align}
We also have the series (cf.~\cite[Definition~3.2]{TodK3})
\begin{align}\notag
U_{(r_1, \overline{\beta}_1), \cdots, (r_k, \overline{\beta}_k)}^{c, G}(q)
\cneq \sum_{\begin{subarray}{c}
\beta_i \in \NS(\widehat{\mathbb{P}}^2), \
\beta_i \equiv \overline{\beta}_i \ (\text{\rm{mod} } r_i) \\
\beta_1 + \cdots + \beta_k=(c, 1-c)
\end{subarray}}
U(\{(r_i, \beta_i)\}_{i=1}^{m}, H, F_{+}) \\
\label{series:U}
\cdot \prod_{i \to j \text{ \rm{in} } G} K_{\widehat{\mathbb{P}}^2}(r_j \beta_i - r_i \beta_j) q^{-\sum_{1\le i<j \le k} \frac{(r_j \beta_i -r_i \beta_j)^2}{2r r_i r_j}}. 
\end{align}
The coefficient $U(\{(r_i, \beta_i)\}_{i=1}^{k}, H, F_{+}) \in \mathbb{Q}$
is Joyce's combinatorial coefficient 
(cf.~\cite[Definition~4.4]{Joy4}), which is 
complicated but explicit. 
The series (\ref{series:U}) 
is a sum of certain theta type series for indefinite lattices
which converges absolutely for $\lvert q \rvert <1$
(cf.~\cite[Proposition~3.3]{TodS3}). 
Finally the series  
$\vartheta_{r, a}(q)$ is a classical theta series
 \begin{align}\notag
\vartheta_{r, a}(q) \cneq \sum_{\begin{subarray}{c}
(k_1, \cdots, k_{r-1}) \in  (a/r, \cdots, a/r) + \mathbb{Z}^{r-1}
\end{subarray}}
q^{\sum_{1\le i\le j\le r-1}k_i k_j}. 
\end{align} 

The three sums in (\ref{recursion}) are finite sums, and 
$r_i \in \mathbb{Z}_{\ge 1}$
in the RHS of (\ref{recursion}) satisfies $r_i<r$. 
By the induction on $r$ and the formula 
(\ref{DT(1,s)}) for $r=1$, 
we are in principle able to compute $\DT(r, c, m)$ for any $r\ge 1$
and $c, m$. 
By the convergence of the series (\ref{series:U}), 
the series (\ref{intro:gen}) also converges
absolutely for $\lvert q \rvert <1$.  

\subsection{The invariants $\DT(0, c, m)$ with $c>0$}
The $r=0$ case was not treated in~\cite{TodK3}. 
In this case, we can describe $\DT(0, c, m)$ with $c>0$
in terms of stable pair invariants on $X$
with curve classes proportional to $[l]$, i.e. 
stable pair invariants on 
$\omega_{\mathbb{P}^2}$ (cf.~Remark~\ref{rmk:supportD}).   
Using the results of~\cite{Tolim2}, \cite{BrH}, 
we have the following lemma: 
\begin{lem}\label{rankzero}
For $c>0$ and $3c+2m \neq 0$, 
we have the following identity
\begin{align*}
&\DT(0, c, m)= \\
&\sum_{\begin{subarray}{c}
k\ge 1, (c_j, n_j) \in \mathbb{Z}^{2}, 
1\le j\le k \\
(c_1, n_1)+ \cdots +(c_k, n_k) 
=(c, \frac{3}{2}c+m)
\end{subarray}}
\frac{2(-1)^{\frac{3}{2}c+m-k}}{(3c+2m)k}
\left(\prod_{j=1}^{k} P_{n_j, c_j[l]}(X)
-\prod_{j=1}^{k}P_{-n_j, c_j[l]}(X) \right). 
\end{align*}
\end{lem}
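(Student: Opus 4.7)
The plan is to derive the identity from a Hall-algebra wall-crossing formula in the style of~\cite{BrH, Tolim2}, specialized to the local geometry of $D \cong \mathbb{P}^2 \subset X$.

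First, I would reduce to a local statement. By Remark~\ref{rmk:supportD}, every pure one-dimensional sheaf $F$ on $X$ with $[F] = c[l]$ for $c > 0$ is scheme-theoretically supported on $D$, and the formal neighborhood of $D$ in $X$ is isomorphic to that of the zero section in $\omega_{\mathbb{P}^2}$. Hence both $P_{n, c[l]}(X)$ and the invariant $\DT(0, c, m)$ depend only on data near $D$. Riemann-Roch on $\mathbb{P}^2$ applied to a sheaf with $\ch(\pi_\ast F) = (0, c, m)$ gives $\chi(F) = N \cneq \tfrac{3}{2}c + m$, so $\DT(0, c, m)$ coincides with the generalized DT invariant counting pure one-dimensional semistable sheaves $F \in \Coh(X)$ with $[F] = c[l]$ and $\chi(F) = N$. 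The hypothesis $3c + 2m \neq 0$ becomes $N \neq 0$.

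Second, I would invoke the PT / $1$-dimensional sheaf wall-crossing. Working in the Hall algebra of $\Coh_{\le 1}(X/Y)$ extended by the rank $\pm 1$ objects $\oO_X[0]$ and $\oO_X[1]$, stable pairs and their derived duals are encoded by Hall-algebra elements whose commutator relations with the generator for $1$-dim semistable sheaves are governed by the Joyce-Song wall-crossing formula as in~\cite{BrH, Tolim2}. After applying the integration map, this assembles into the generating-function identity
\begin{align*}
\sum_{c > 0,\, N \in \mathbb{Z}} (-1)^{N-1} N\, \DT\bigl(0, c, N - \tfrac{3}{2}c\bigr)\, q^N t^c
\;=\; \log Z^{\rm{PT}}(q, t) \;-\; \log Z^{\rm{PT}}(q^{-1}, t),
\end{align*}
where $Z^{\rm{PT}}(q, t) \cneq 1 + \sum_{c > 0,\, n \in \mathbb{Z}} P_{n, c[l]}(X)\, q^n t^c$. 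Expanding each logarithm as $\log(1+x) = \sum_{k \ge 1} (-1)^{k-1} x^k / k$, extracting the coefficient of $q^N t^c$, and using the substitution $n_j \mapsto -n_j$ in the second logarithm to produce the $\prod_j P_{-n_j, c_j[l]}(X)$ contributions, one arrives at the stated formula after a short sign computation that absorbs the prefactor $(-1)^{N-1} N$ and rewrites $1/N$ as $2/(3c+2m)$.

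The main obstacle is the precise derivation of the displayed wall-crossing identity with the correct signs and combinatorial coefficients. This amounts to adapting the Hall-algebra framework of~\cite{BrH, Tolim2}, originally developed for $\Coh_{\le 1}(X)$ on compact Calabi-Yau $3$-folds, to the present relative setting involving $\Coh_{\le 1}(X/Y)$, and verifying the corresponding Behrend-function identities. Since all objects occurring in the relevant wall-crossing remain coherent sheaves on $X$ (being extensions of $\oO_X[0]$ and $\oO_X[1]$ by objects of $\Coh_{\le 1}(X/Y)$), these identities follow from Joyce-Song~\cite{JS} and do not require Conjecture~\ref{conj:critical}.
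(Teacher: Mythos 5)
Your proposal is correct and follows essentially the same route as the paper: reduce $\DT(0,c,m)$ to the one-dimensional-sheaf counting invariant $N_{\frac{3}{2}c+m,\,c[l]}(X)$ via the support-on-$D$ observation, invoke the stable pair/one-dimensional-sheaf wall-crossing formula of~\cite{Tolim2},~\cite{BrH} to get the log-difference identity, and extract coefficients. The only superfluous point is your final paragraph: no adaptation to the relative category $\Coh_{\le 1}(X/Y)$ is needed, since the paper simply quotes the already-established global results on the compact $X$ (together with the symmetries $N_{n,\beta}=N_{-n,\beta}$ and $L_{n,\beta}=L_{-n,\beta}$ that make the $L$-terms cancel in the difference of logarithms).
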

\begin{proof}
For $n\in \mathbb{Z}$ and $\beta \in H_2(X, \mathbb{Z})$, 
let $N_{n, \beta}(X) \in \mathbb{Q}$ be the generalized 
DT invariant on $X$ given by\footnote{The invariant
$N_{n, \beta}(X)$ does not depend on 
a choice of an ample divsor $H$. See~\cite[Theorem~6.16]{JS}.}
\begin{align*}
N_{n, \beta}(X) \cneq \DT(0, 0, \beta, n). 
\end{align*}
Since any one dimensional sheaf $F$ on $X$
with $[F]=c[l]$ on $X$
is supported on 
$D$, we have the equality
\begin{align}\label{DT=N}
\DT(0, c, m)=N_{\frac{3}{2}c+m, c[l]}(X). 
\end{align}
By the result of~\cite{Tolim2}, \cite{BrH}, 
we have the following formula
\begin{align}\label{TB}
&1+\sum_{n\in \mathbb{Z}, c>0}
P_{n, c[l]}(X)q^n t^{c}
= \\ \notag
&\prod_{n>0, c>0}\exp \left(
(-1)^{n-1}N_{n, c[l]}(X) q^n t^{c}  \right)^n 
\left( \sum_{n, c}L_{n, c[l]}(X)q^n t^{c} \right). 
\end{align}
Here $L_{n, c[l]}(X) \in \mathbb{Q}$ is a certain 
invariant, which satisfies $L_{n, c[l]}(X)=L_{-n, c[l]}(X)$
and zero for $\lvert n \rvert \gg 0$ for fixed $c$. 
By taking the logarithm of (\ref{TB}), 
replacing $q$ by $q^{-1}$ and taking the difference, we obtain the formula: 
\begin{align*}
&\sum_{n\in \mathbb{Z}, c>0}(-1)^{n-1}nN_{n, c[l]}(X)q^n t^{c}
= \\
&\log \left(1+\sum_{n\in \mathbb{Z}, c>0}
P_{n, c[l]}(X)q^n t^{c}  \right)
-\log \left(1+\sum_{n\in \mathbb{Z}, c>0}
P_{n, c[l]}(X)q^{-n} t^{c}  \right). 
\end{align*}
Here we have used the fact that $N_{n, c[l]}(X)=N_{-n, c[l]}(X)$
(cf.~\cite[Lemma~4.3 (i)]{Tolim2}). 
Combined with (\ref{DT=N}), we obtain the desired identity. 
\end{proof}

\begin{rmk}
By (\ref{eDT}), one can replace $\DT(0, c, m)$
with $\DT(0, c, m+kc)$ for $k\in \mathbb{Z}$
so that $3c+2(m+kc) \neq 0$
holds. 
Then applying Lemma~\ref{rankzero}, one can 
describe $\DT(0, c, m)$ in terms of 
$P_{n, c[l]}(X)$ even if $3c+2m=0$. 
\end{rmk}

\section{The space of weak stability conditions}\label{sec:weak}
Let 
$X$ be a smooth projective Calabi-Yau 3-fold
containing a divisor $D \cong \mathbb{P}^2$. 
In this section, we 
construct a one parameter family 
of weak stability conditions on a triangulated 
category $\dD_{X/Y}$ 
associated to the
birational contraction (\ref{fXY}). 
\subsection{Tilting of $\Coh_{\le d}(X/Y)$}\label{subsec:tiltcoh}
For $0\neq F \in \Coh(\mathbb{P}^2)$, let 
$\mu(F)$ be its slope given by 
\begin{align*}
\mu(F) \cneq \frac{c_1(F) \cdot h}{\rank(F)} \in \mathbb{Q} \cup \{\infty\}. 
\end{align*}
Here $h= c_1(\oO_{\mathbb{P}^2}(1))$
and $\mu(F)=\infty$ if $\rank(F)=0$. 
The above slope function defines the notion of 
$\mu$-semistable sheaves on $\mathbb{P}^2$ in the usual way. 

Let 
$\Coh_{\le d}(X/Y)$
be the abelian subcategory of $\Coh(X)$
defined by (\ref{Coh1}). 
We define the pair of subcategories 
$(\tT_{\le d}, \fF)$ on $\Coh_{\le d}(X/Y)$ in the following way: 
\begin{align}\label{def:TF}
&\tT_{\le d}\cneq \left\langle \Coh_{\le d}(X), i_{\ast}T :
\begin{array}{c}
T\in \Coh(\mathbb{P}^2) \mbox{ is } 
\mu \mbox{-semistable } \\
\mbox{ with } \mu(F)>-1/2
\end{array} \right\rangle_{\rm{ex}} \\
\notag
&\fF\cneq \left\langle i_{\ast}F :
\begin{array}{c}
F\in \Coh(\mathbb{P}^2) \mbox{ is } 
\mu \mbox{-semistable } \\
\mbox{ with } \mu(F)\le -1/2
\end{array} \right\rangle_{\rm{ex}}. 
\end{align}
In what follows, we assume that 
$d \in \{0, 1\}$. 
We have the following lemma: 
\begin{lem}
The subcategories $(\tT_{\le d}, \fF)$ form
a torsion pair of $\Coh_{\le d}(X/Y)$, i.e. 
$\Hom(\tT_{\le d}, \fF)=0$ and 
any object $E \in \Coh_{\le d}(X/Y)$ fits into 
an exact sequence
\begin{align}\label{TEF}
0 \to T \to E \to F \to 0
\end{align}
for $T\in \tT_{\le d}$, $F \in \fF$. 
\end{lem}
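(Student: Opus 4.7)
The plan is to verify the two defining conditions of a torsion pair: the Hom-vanishing $\Hom(\tT_{\le d}, \fF)=0$, and the existence for each $E\in \Coh_{\le d}(X/Y)$ of a sequence of the form (\ref{TEF}). Both subcategories are extension closures, so the Hom-vanishing reduces to checking on generators, and splits into two cases. For $C \in \Coh_{\le d}(X)$ (so $\dim \Supp C \le 1$) mapping to $i_{\ast}F$ with $F$ a $\mu$-semistable sheaf of finite slope $\le -1/2$: since $F$ is then torsion-free on $\mathbb{P}^2$, the pushforward $i_{\ast}F$ is pure of dimension two on $X$, so the image of any map $C \to i_{\ast}F$ would have dimension $\le 1$ inside a pure 2-dimensional sheaf and must vanish. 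For $i_{\ast}G \to i_{\ast}F$ with $\mu(G) > -1/2 \ge \mu(F)$: since both sides are $\oO_D$-modules, one has $\Hom_X(i_{\ast}G, i_{\ast}F) \cong \Hom_{\mathbb{P}^2}(G, F)$, which vanishes by standard slope semistability since $\mu(G) > \mu(F)$ (and the torsion case $\mu(G)=\infty$ is handled by noting $F$ is torsion-free).

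For the decomposition, I would construct an explicit filtration of $E \in \Coh_{\le d}(X/Y)$ whose subquotients lie in $\tT_{\le d} \cup \fF$. Let $E^0 \subset E$ be the maximal subsheaf of dimension $\le 1$, and set $\bar{E} = E/E^0$, which is pure of dimension two or zero. Using $\Supp(f_{\ast} \bar{E}) = f(\Supp \bar{E})$ (since $f$ is proper) and $\dim \Supp(f_{\ast}E) \le d \le 1$, together with the fact that $f$ contracts only $D$ to a point, every two-dimensional component of $\Supp \bar{E}$ must equal $D$; hence $\Supp \bar{E} \subset D$ set-theoretically, and by coherence $\iI^{N}_D \bar{E} = 0$ for some $N \gg 0$. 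The filtration $\bar{E} \supset \iI_D \bar{E} \supset \iI_D^2 \bar{E} \supset \cdots \supset 0$ then has subquotients of the form $i_{\ast} G_k$ for $G_k \in \Coh(\mathbb{P}^2)$; refining each $G_k$ by its Harder-Narasimhan filtration with respect to $\mu$-stability and splitting at the slope $-1/2$ yields subquotients lying in $\tT_{\le d}$ or $\fF$. A parallel treatment of $E^0$ completes the filtration: for $d = 1$ one has $E^0 \in \Coh_{\le 1}(X) \subset \tT_{\le 1}$ directly; for $d = 0$, the 1-dimensional part of $E^0$ must be supported on $D$ (else $f_{\ast}E$ would have 1-dimensional support away from $p$), and any such sheaf is of the form $i_{\ast}T$ with $T$ pure 1-dimensional on $\mathbb{P}^2$, so $\mu(T) = \infty > -1/2$, placing $E^0 \in \tT_{\le 0}$. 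Taking $T \subset E$ to be the sum of all subsheaves lying in $\tT_{\le d}$ (itself in $\tT_{\le d}$ by closure under extensions and quotients), the Hom-vanishing together with the filtration forces $E/T \in \fF$ by a short induction along the filtration.

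The main technical point is establishing $\Supp \bar{E} \subset D$, which relies on the compatibility of support under the proper birational pushforward $f_{\ast}$ and the purity of $\bar{E}$, together with the careful bookkeeping in the $d=0$ case where $E^0$ can contain 1-dimensional pieces that must nevertheless be forced onto $D$ by the support constraint on $f_{\ast}E$.
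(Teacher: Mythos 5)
Your verification of $\Hom(\tT_{\le d},\fF)=0$ is fine (the paper dismisses this step as obvious), and the reduction of $E$ to a pure two-dimensional quotient that is set-theoretically, hence nilpotently, supported on $D$ is also correct. The gap is in the final assembly. You produce a filtration of $E$ whose subquotients lie in $\tT_{\le d}\cup\fF$ and then assert that, taking $T$ to be the maximal subsheaf of $E$ lying in $\tT_{\le d}$, ``the Hom-vanishing together with the filtration forces $E/T\in\fF$ by a short induction along the filtration.'' That implication is not valid at this level of generality: two extension-closed classes with $\Hom(\tT,\fF)=0$ which jointly filter every object need not form a torsion pair. For the $A_2$-quiver $1\to 2$, take $\tT$ and $\fF$ to be the extension closures of the simples $S_1$ and $S_2$; then $\Hom(\tT,\fF)=0$ and every module is filtered by $S_1, S_2$, yet the indecomposable projective $P_1$, sitting in $0\to S_2\to P_1\to S_1\to 0$, admits no decomposition with the $\tT$-part as a subobject. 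The issue with your particular filtration is the ordering of the pieces: the graded pieces of the $\iI_D$-adic filtration satisfy surjections $G_k\otimes\oO_{\mathbb{P}^2}(3)\twoheadrightarrow G_{k+1}$ (since $\iI_D/\iI_D^2\cong\oO_{\mathbb{P}^2}(3)$), which bounds $\mu_{\min}(G_{k+1})$ from below in terms of $\mu_{\min}(G_k)$ only; a single $G_k$ can have Harder--Narasimhan slopes straddling $-1/2$, in which case an $\fF$-piece of $G_{k+1}$ sits strictly \emph{below} a $\tT$-piece of $G_k$, and no formal induction rearranges them.

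The paper's proof avoids this by filtering the pure two-dimensional quotient $F'=E/T'$ by its Harder--Narasimhan filtration on $X$, whose factors are automatically ordered by decreasing slope, and by invoking the fact (from the author's earlier work, using that $\oO_D(D)\cong\oO_{\mathbb{P}^2}(-3)$ is negative) that a \emph{semistable} pure two-dimensional sheaf supported on $D$ is scheme-theoretically supported on $D$, hence of the form $i_{\ast}A$ with $A$ $\mu$-semistable on $\mathbb{P}^2$. Truncating that filtration at slope $-1/2$ gives the sequence (\ref{TEF}) directly, with no rearrangement needed. Your argument can be repaired by substituting this HN filtration for the $\iI_D$-adic one, or equivalently by showing directly that $E/T$ is pure two-dimensional with $\mu_{\max}\le -1/2$, which again requires the scheme-theoretic support statement applied to its maximal destabilizing subsheaf.
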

\begin{proof}
The condition $\Hom(\tT_{\le d}, \fF)=0$ is obvious from 
the definition of $(\tT_{\le d}, \fF)$. 
We check the condition (\ref{TEF}). 
For $E \in \Coh_{\le d}(X/Y)$, there 
is an exact sequence of sheaves
\begin{align}\label{TEF'}
0 \to T' \to E \to F' \to 0
\end{align}
where $T' \in \Coh_{\le d}(X)$ and 
$F'$ is pure two dimensional (if it is non-zero)
supported on $D$. 
Note that any pure two dimensional semistable sheaf on $X$ 
supported on $D$ is scheme theoretically supported on $D$
(cf.~\cite[Lemma~2.3]{TodK3}). 
Therefore by truncating the Harder-Narasimhan filtration of $F'$, 
and combining the exact sequence (\ref{TEF'}), 
we obtain the desired exact sequence (\ref{TEF}). 
\end{proof}
By taking the tilting (cf.~\cite{HRS}) with respect to the torsion 
pair $(\tT_{\le d}, \fF)$, we obtain the 
heart of a bounded t-structure on $D^b \Coh_{\le d}(X/Y)$
\begin{align*}
\bB_{\le d} \cneq \langle \fF[1], \tT_{\le d} \rangle_{\rm{ex}} 
\subset D^b \Coh_{\le d}(X/Y). 
\end{align*}
Note that we have 
\begin{align*}
\bB_0= \bB_{\le 1} \cap D^b \Coh_0(X/Y). 
\end{align*}
\subsection{Relation of $\bB_{\le d}$ and $\Coh_{\le d}(\yY)$}
Let $\Phi$ be the derived equivalence (\ref{Phi}).
Note that 
$\Phi(\Coh_{\le d}(\yY))$
is the heart of a bounded t-structure on $D^b \Coh_{\le d}(X/Y)$
by the diagram (\ref{cohd}). 
We set
\begin{align}\label{def:Tdag}
&\tT^{\dag} \cneq 
\Phi(\Coh_{\le d}(\yY)) \cap \bB_{\le d}[1] \\
\notag
&\fF^{\dag}_{\le d} \cneq 
\Phi(\Coh_{\le d}(\yY)) \cap \bB_{\le d}. 
\end{align} 
Note that $\tT^{\dag}$ does not depend on a choice
of $d \in \{0, 1\}$. 
\begin{prop}\label{wtor}
The pair $(\tT^{\dag}, \fF^{\dag}_{\le d})$ is 
a torsion pair of $\Phi(\Coh_{\le d}(\yY))$
such that
\begin{align*}
\bB_{\le d}=\langle \fF^{\dag}_{\le d}, 
\tT^{\dag}[-1]  \rangle_{\rm{ex}}. 
\end{align*}
\end{prop}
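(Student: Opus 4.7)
The plan is to apply the standard torsion-pair tilting formalism once I establish the pair of cohomological-window statements
\begin{align*}
\aA \subset \bB_{\le d}[1] \ast \bB_{\le d}, \qquad \bB_{\le d} \subset \aA \ast \aA[-1],
\end{align*}
where $\aA \cneq \Phi(\Coh_{\le d}(\yY))$. Together these express that the two hearts $\aA$ and $\bB_{\le d}$ on $D^b \Coh_{\le d}(X/Y)$ are related by a single perverse shift. The Hom-vanishing $\Hom(\tT^{\dag}, \fF^{\dag}_{\le d}) = 0$ is immediate: writing $T = T'[1]$ with $T' \in \bB_{\le d}$ for $T \in \tT^{\dag}$, and taking $F \in \fF^{\dag}_{\le d} \subset \bB_{\le d}$, we get $\Hom(T, F) = \Hom(T', F[-1]) = 0$ by the heart property of $\bB_{\le d}$.

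For the first containment, the full subcategory of objects with $\bB_{\le d}$-cohomology concentrated in degrees $\{-1, 0\}$ is extension-closed (from the long exact sequence of $\bB_{\le d}$-cohomology), so it suffices to check on extension-generators of $\aA$. These split into two classes. Sheaves on $\yY$ supported away from the orbifold point are identified by $\Phi$ with sheaves on $X \setminus D$, hence lie in $\Coh_{\le d}(X) \subset \tT_{\le d} \subset \bB_{\le d}$. Sheaves on $\yY$ supported at the orbifold point are built from the perverse simples $T_0, T_1, T_2$ of Lemma~\ref{lem:ST}. Using $\mu(\oO_{\mathbb{P}^2}) = 0$, $\mu(\Omega_{\mathbb{P}^2}(1)) = -1/2$ and $\mu(\oO_{\mathbb{P}^2}(-1)) = -1$, one finds $T_0 \in \tT_{\le d} \subset \bB_{\le d}$, $T_1 \in \fF[1] \subset \bB_{\le d}$, and $T_2 \in \fF[2] \subset \bB_{\le d}[1]$. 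For pure one-dimensional sheaves on $\yY$ meeting the orbifold point (the new feature in the $d = 1$ case), I would resolve them using the Beilinson tilting bundle $\eE = \oO_U \oplus \oO_U(1) \oplus \oO_U(2)$ on $U = \omega_{\mathbb{P}^2}$ from~(\ref{E}) and check that each cohomology piece on $\mathbb{P}^2$ lies in the appropriate slope regime. The dual containment $\bB_{\le d} \subset \aA \ast \aA[-1]$ is verified symmetrically on the torsion-pair generators $i_{\ast}T$ and $i_{\ast}F[1]$ of $\bB_{\le d}$ together with sheaves in $\Coh_{\le d}(X)$.

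Once both containments hold, I would conclude as follows. For $E \in \aA$, the $\bB_{\le d}$-truncation triangle $B'[1] \to E \to B$ has $B', B \in \bB_{\le d}$. Applying the $\aA$-cohomology long exact sequence, using $\hH^i_{\aA}(E) = 0$ for $i \neq 0$, and using the second containment to confine the $\aA$-cohomology of $B, B'$ to degrees $\{0, 1\}$, a short diagram chase at $i = -1, 0, 1$ forces $\hH^0_{\aA}(B') = 0$, $\hH^1_{\aA}(B) = 0$, and hence $B'[1], B \in \aA$. Thus $B'[1] \in \tT^{\dag}$ and $B \in \fF^{\dag}_{\le d}$, which combined with the Hom-vanishing gives the torsion-pair structure. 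Symmetrically, for $B \in \bB_{\le d}$ the $\aA$-truncation triangle decomposes $B$ into a piece in $\fF^{\dag}_{\le d}$ and a shifted piece in $\tT^{\dag}[-1]$, yielding $\bB_{\le d} \subset \langle \fF^{\dag}_{\le d}, \tT^{\dag}[-1] \rangle_{\rm ex}$; the reverse inclusion is tautological from $\fF^{\dag}_{\le d}, \tT^{\dag}[-1] \subset \bB_{\le d}$ and extension-closedness of $\bB_{\le d}$.

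The main obstacle is the Beilinson-type resolution step for pure one-dimensional sheaves on $\yY$ whose support meets the orbifold singularity: one must verify that each cohomology sheaf of the resolution on $\mathbb{P}^2$ sits in the correct slope regime so that the resulting two-term complex lies in $\bB_{\le d}[1] \ast \bB_{\le d}$. The zero-dimensional case is a finite-length representation-theoretic argument on the McKay quiver simples $T_0, T_1, T_2$; the one-dimensional case requires a more delicate control of the Cohen-Macaulay sheaves along the exceptional divisor, after reducing via the maximal zero-dimensional subsheaf to the pure case.
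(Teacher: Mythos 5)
Your framework is the right one: the proposition is equivalent to a cohomological window condition relating the two hearts (the paper cites \cite[Proposition~2.3.2]{BMT} for exactly this), your Hom-vanishing argument is fine, your reduction of the $d=0$ case to the generators $\oO_x$ ($x\neq p$) and $S_j$ is what the paper does, and your slope computations placing $T_0\in\tT_{\le d}$, $T_1\in\fF[1]$, $T_2\in\fF[2]$ are correct. But the one genuinely nontrivial step --- pure one-dimensional sheaves on $\yY$ whose support meets the orbifold point, i.e.\ the whole content of the $d=1$ case --- is not proved. You describe what you ``would'' do (a Beilinson resolution by $\oO_U\oplus\oO_U(1)\oplus\oO_U(2)$ with a case analysis of slope regimes) and then explicitly flag it as the main obstacle. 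As written this is a sketch of a plan, not an argument, and it is not clear that the resolution route closes: controlling which slope window each cohomology sheaf of $\Phi(F)$ lands in, for an arbitrary pure one-dimensional $F$ through $p$, is exactly the hard part and is left untouched.

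The paper avoids this entirely by checking the window condition in the opposite direction for $d=1$. Since the two conditions $\hH^i_{\bB_{\le d}}(\Phi(F))=0$ for $i\neq -1,0$ (all $F\in\Coh_{\le d}(\yY)$) and $\hH^i(\Phi^{-1}(F))=0$ for $i\neq 0,1$ (all $F\in\bB_{\le d}$) are each equivalent to the claim, one is free to verify whichever is easier, and $\bB_{\le 1}$ has much more convenient generators: it is the extension closure of $\bB_0$ (already handled by the $d=0$ case) and $\Coh_{\le 1}(X)$. For $F\in\Coh_{\le 1}(X)$ the cohomologies $\hH^i(\Phi^{-1}(F))$ with $i\neq 0$ are supported at $p$, so the required vanishing reduces to
\begin{align*}
\Hom(S_j[k],\Phi^{-1}(F))\cong\Ext_X^{-j-k}(T_j[-j],F)=0,\qquad
\Hom(\Phi^{-1}(F),S_j[-k-1])\cong\Ext_X^{j-k-1}(F,T_j[-j])=0
\end{align*}
for $k>0$, and these hold for trivial reasons because each $T_j[-j]$ is a pure two-dimensional sheaf and $F$ is at most one-dimensional. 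No resolution, no slope bookkeeping. You should either carry out your Beilinson computation in full or, better, switch to this dual check, which turns the ``main obstacle'' into a two-line Ext vanishing.
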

\begin{proof}
Let $\hH_{\bB_{\le d}}^i(\ast)$ be the $i$-th cohomology 
functor on $D^b \Coh_{\le d}(X/Y)$
with respect to the t-structure with heart
$\bB_{\le d}$. 
The claim 
is equivalent to either one the following conditions
(cf.~\cite[Proposition~2.3.2]{BMT}): 
\begin{align}\label{HB0}
&\hH_{\bB_{\le d}}^i(\Phi(F)) =0, \ i\neq -1, 0, 
\mbox{ for any } F \in \Coh_{\le d}(\yY) \\
\label{HB01}
&\hH^i(\Phi^{-1}(F))=0, \ i\neq 0, 1, \mbox{ for any }
F \in \bB_{\le d}.  
\end{align}
We first check (\ref{HB0}) for $d=0$. 
Since $\Coh_0(\yY)$ is the extension closure of 
$\oO_x$ for $x \neq p$ and $S_j$ with $j=0, 1, 2$, 
we may assume that $F$ is either one of the above objects. 
Obviously (\ref{HB0}) is satisfied for $F=\oO_x$ for $x \neq p$. 
We have 
$\Phi(S_j)=T_j$
by Lemma~\ref{lem:ST}, 
and the definition of $T_j$ in (\ref{def:T}) yields 
\begin{align*}
T_0 \in \tT_{0}, \ T_1 \in \fF[1], \ 
T_2 \in \fF[2]. 
\end{align*}
Therefore (\ref{HB0}) is satisfied for $F=S_j$ with 
$0\le j\le 2$. 

Next we prove (\ref{HB01}) for $d=1$. 
By the definition, the category 
$\bB_{\le 1}$
is the extension closure of objects in $\bB_{0}$ and 
objects in $\Coh_{\le 1}(X)$. 
Since (\ref{HB0}) holds for $d=0$, the condition (\ref{HB01})
also holds for $d=0$. 
Therefore it is enough to show that 
the condition (\ref{HB01}) holds for any 
$F \in \Coh_{\le 1}(X)$. 
Since $\hH^i(\Phi^{-1}(F))$ is 
supported on $p \in \yY$
for $i\neq 0$, 
it is enough to check the vanishing of
the following spaces for any 
$0\le j\le 2$ and $k>0$:
\begin{align*}
&\Hom(S_j[k], \Phi^{-1}(F))
\cong \Ext_X^{-j-k}(T_j[-j], F)\\ 
&\Hom(\Phi^{-1}(F), S_j[-k-1]) \cong 
\Ext^{j-k-1}_X(F, T_j[-j]).  
\end{align*}
Here we have used Lemma~\ref{lem:ST}. 
Since $T_j[-j]$ is a pure two dimensional sheaf, 
the above spaces vanish. 
Therefore we obtain (\ref{HB01}) for $d=1$. 
\end{proof}

We have the following corollary of the 
above proposition: 
\begin{cor}\label{cor:pure}
For any pure one dimensional $F \in \Coh_{\le 1}(\yY)$, 
we have $\Phi(F) \in \bB_{\le 1}$. 
\end{cor}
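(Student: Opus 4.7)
The plan is to deduce the corollary directly from Proposition~\ref{wtor} by applying the torsion pair $(\tT^{\dag}, \fF^{\dag}_{\le 1})$ to $\Phi(F)$ and showing that the torsion part is forced to vanish by the purity of $F$.

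First I would apply Proposition~\ref{wtor} with $d=1$ to obtain a short exact sequence
\begin{align*}
0 \to T \to \Phi(F) \to \Phi(F') \to 0
\end{align*}
in $\Phi(\Coh_{\le 1}(\yY))$, where $T \in \tT^{\dag}$ and $\Phi(F') \in \fF^{\dag}_{\le 1} \subset \bB_{\le 1}$. Setting $G \cneq \Phi^{-1}(T)$, this corresponds under the equivalence $\Phi$ to a short exact sequence
\begin{align*}
0 \to G \to F \to F' \to 0
\end{align*}
in $\Coh_{\le 1}(\yY)$. Since $F$ is pure one-dimensional, it admits no nonzero zero-dimensional subsheaf, so it suffices to show that $G \in \Coh_0(\yY)$; this forces $G=0$, hence $T=0$, and consequently $\Phi(F) = \Phi(F') \in \bB_{\le 1}$.

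To establish $G \in \Coh_0(\yY)$, I would exploit the fact that over the open locus $Y \setminus \{p\}$ both $f$ and $g$ in (\ref{dia:global}) are isomorphisms, so $p$ and $q$ are also isomorphisms there, and $\Phi = \dR p_\ast \dL q^\ast$ restricts to the tautological equivalence $\Coh(\yY \setminus \{p\}) \stackrel{\sim}{\to} \Coh(X \setminus D)$ identifying sheaves concentrated in cohomological degree zero. In particular $\Phi(G)|_{X \setminus D}$ coincides with $G|_{\yY \setminus \{p\}}$, placed in degree zero. On the other hand, $\Phi(G) = T$ lies in $\bB_{\le 1}[1]$, and the definition $\bB_{\le 1} = \langle \fF[1], \tT_{\le 1}\rangle_{\rm{ex}}$ implies that any object of $\bB_{\le 1}[1]$ has its standard cohomology sheaves concentrated in degrees $-2$ and $-1$; in particular $\hH^0(\Phi(G)) = 0$. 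Combining these two observations yields $G|_{\yY \setminus \{p\}} = 0$, so $G$ is set-theoretically supported at $p$, and hence $G \in \Coh_0(\yY)$.

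No serious obstacle is expected: the only point requiring some care is the identification of $\Phi$ with the identity functor over the isomorphism locus of the birational map, which is immediate from the fiber-product description of (\ref{dia:global}). The corollary is thus essentially a direct unpacking of the torsion pair produced by Proposition~\ref{wtor}, together with the purity hypothesis on $F$.
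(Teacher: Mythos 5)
Your proof is correct and follows essentially the same route as the paper: the paper notes that purity gives $\Hom(\tT^{\dag}, \Phi(F))=0$ and then invokes the torsion pair of Proposition~\ref{wtor} to conclude $\Phi(F) \in \fF^{\dag}_{\le 1}$, which is exactly your argument with the torsion part made explicit. The only difference is that you spell out the (implicitly used) fact that $\tT^{\dag} \subset \Phi(\Coh_0(\yY))$, via the vanishing of $\hH^0$ on $\bB_{\le 1}[1]$ and the identification of $\Phi$ with the identity away from $D$ — a worthwhile clarification, but not a different proof.
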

\begin{proof}
If $F \in \Coh_{\le 1}(\yY)$ is pure, we have
$\Hom(\tT^{\dag}, \Phi(F))=0$. 
By Proposition~\ref{wtor}, we have 
$\Phi(F) \in \fF^{\dag}_{\le 1} \subset \bB_{\le 1}$. 
\end{proof}

\subsection{Abelian category $\aA_{X/Y}$}
We define the triangulated category $\dD_{X/Y}$
in the following way: 
\begin{align*}
\dD_{X/Y} \cneq 
\langle \oO_X, D^b \Coh_{\le 1}(X/Y) \rangle_{\mathrm{tr}}
\subset D^b \Coh(X).
\end{align*} 
The above triangulated category plays a crucial 
role in our main purpose. 
Let $\aA_{X/Y}$ be the subcategory of $\dD_{X/Y}$, 
defined by  
\begin{align*}
\aA_{X/Y} \cneq
\langle \oO_X, \bB_{\le 1}[-1] \rangle_{\rm{ex}} \subset \dD_{X/Y}. 
\end{align*}
We have the following lemma: 
\begin{lem}\label{lem:t}
There is a bounded t-structure on $\dD_{X/Y}$ whose 
heart is given by $\aA_{X/Y}$. In particular, $\aA_{X/Y}$ 
is an abelian category. 
\end{lem}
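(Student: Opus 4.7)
The plan is to invoke the standard criterion that a full additive subcategory $\aA \subset \dD$ of a triangulated category is the heart of a bounded t-structure if and only if (i) $\Hom(A, B[i]) = 0$ for all $A, B \in \aA$ and all $i < 0$, and (ii) every object of $\dD$ admits a finite filtration with graded pieces in shifts $\aA[k]$. Abelianness of $\aA_{X/Y}$ will then be automatic. So the proof reduces to verifying (i) and (ii) for the pair $\aA_{X/Y} \subset \dD_{X/Y}$.

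For (i), since $\aA_{X/Y}$ is the extension closure of $\oO_X$ together with $\bB_{\le 1}[-1]$ and $\Hom(-,-[i])$-vanishing for $i<0$ is preserved under extensions in either variable via the long exact sequence, it suffices to check vanishing among the generators. The two self-vanishings are immediate: $\Hom(\oO_X, \oO_X[i]) = H^i(X, \oO_X) = 0$ for $i < 0$, and $\Hom(\bB_{\le 1}[-1], \bB_{\le 1}[-1][i]) = 0$ for $i<0$ since $\bB_{\le 1}$ is a heart on $D^b\Coh_{\le 1}(X/Y)$. For the cross terms, I use that any $E \in \bB_{\le 1}$ has cohomology sheaves only in degrees $-1$ and $0$, with $\hH^{-1}(E) \in \fF$ and $\hH^0(E) \in \tT_{\le 1}$. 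Then $\Hom(\oO_X, E[i-1])$ vanishes for $i < 0$ by the hypercohomology spectral sequence, because $p\ge 0$ and $q\in\{-1,0\}$ force $p+q \ge -1$ whereas $i-1 \le -2$. Dually, the triangle $\hH^{-1}(E)[1]\to E\to \hH^0(E)$ reduces $\Hom(E, \oO_X[i+1])$ to $\Ext^i(\hH^{-1}(E), \oO_X)$ and $\Ext^{i+1}(\hH^0(E), \oO_X)$, both of which vanish: the first is a negative Ext of a sheaf, and the second is negative for $i<-1$, while for the boundary case $i=-1$ it equals $\Hom(\hH^0(E), \oO_X) = 0$ because $\hH^0(E)\in\tT_{\le 1}$ is torsion and $\oO_X$ is torsion-free.

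For (ii), once (i) is established, a standard lemma says that the full subcategory of $\dD_{X/Y}$ consisting of objects which admit such a filtration is triangulated; hence it is enough to exhibit a filtration for each member of a generating set. By definition $\dD_{X/Y}$ is generated as a triangulated category by $\oO_X$ and $\Coh_{\le 1}(X/Y)$. The case $\oO_X \in \aA_{X/Y}$ is trivial. For $F \in \Coh_{\le 1}(X/Y)$, the torsion pair $(\tT_{\le 1}, \fF)$ produces a short exact sequence
\begin{align*}
0 \to T \to F \to F' \to 0
\end{align*}
with $T \in \tT_{\le 1} \subset \bB_{\le 1} = \aA_{X/Y}[1]$ and $F' \in \fF$, hence $F' \in \bB_{\le 1}[-1] \subset \aA_{X/Y}$. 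This furnishes a two-step filtration of $F$ with graded pieces in $\aA_{X/Y}$ and $\aA_{X/Y}[1]$, and the same construction works after an arbitrary shift. Boundedness is immediate from the finiteness of these filtrations.

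The only mildly delicate point is the boundary case $i = -1$ in the cross-vanishing of (i), which is where the torsion-free nature of $\oO_X$ interacts with the torsion pair; this is the reason the cutoff $\mu > -1/2$ (and dually $\mu \le -1/2$) in the definition \eqref{def:TF} of $(\tT_{\le d}, \fF)$ is calibrated precisely as it is. Apart from this, the argument is pure bookkeeping of cohomological degrees, and I do not foresee any substantive obstacle.
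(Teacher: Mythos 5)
Your verification of condition (i) is fine as far as it goes, but step (ii) contains a genuine gap: the ``standard lemma'' you invoke --- that once negative $\Hom$'s vanish, the full subcategory of objects admitting a filtration with factors in shifts of $\aA_{X/Y}$ is automatically triangulated --- is false under condition (i) alone. A counterexample: in $D^b \Coh(\mathbb{P}^1)$ take $\aA = \langle \oO, \Coh_0(\mathbb{P}^1) \rangle_{\rm{ex}}$. All negative $\Hom$'s between the generators vanish, yet the cone of $\oO \to \oO_p$ is $\oO(-1)[1]$, which admits no filtration with factors in shifts of $\aA$ (comparing cohomology sheaves, such a filtration would force $\oO(-1) \in \aA$, impossible since every object of $\aA$ has nonnegative degree). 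So the filtered objects are not closed under cones, and you cannot reduce (ii) to a generating set. What rules out this failure in the situation at hand --- and what your proof never verifies --- is the \emph{degree-zero} orthogonality $\Hom(\oO_X, F) = \Hom(F, \oO_X) = 0$ for all $F \in \bB_{\le 1}[-1]$, which amounts to the two nontrivial vanishings $\Hom(\oO_X, \fF) = 0$ and $\Ext^1_X(\tT_{\le 1}, \oO_X) = 0$. In the counterexample the analogous $\Hom(\oO, \oO_p)$ is nonzero, which is exactly why the construction breaks.

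These degree-zero vanishings are the actual computational content of the paper's proof: it invokes Proposition~\ref{prop:t} from \cite{Tcurve1}, whose hypotheses are precisely $\Hom(L, \aA') = \Hom(\aA', L) = 0$ together with the existence of an ambient heart $\aA$ on all of $D^b \Coh(X)$ (obtained by extending the torsion pair to $(\tT_{\le 1}, \fF')$ on $\Coh(X)$) in which $\bB_{\le 1}[-1]$ sits closed under subobjects and quotients. The vanishings are then checked via Serre duality and the slope bounds $\mu(\oO_D) = 0 > -1/2 \ge \mu(F)$ and $\mu(T) > -1/2 > -3 = \mu(\omega_{\mathbb{P}^2})$. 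This also shows that your closing remark misidentifies where the cutoff $-1/2$ enters: your $i = -1$ boundary case uses only that $\tT_{\le 1}$ consists of torsion sheaves while $\oO_X$ is torsion-free, and would go through for any tilt; the calibration of the cutoff is genuinely needed only in the two degree-zero vanishings that your argument omits. To repair the proof you must add these two computations and replace the false reduction in (ii) by an argument (such as Proposition~\ref{prop:t}, or a direct octahedron chase using the degree-zero orthogonality) that actually propagates the filtration property through cones.
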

\begin{proof}
Let $\fF'$ be the subcategory of $\Coh(X)$
defined by
\begin{align*}
\fF' \cneq \{ E \in \Coh(X) : 
\Hom(\tT_{\le 1}, E)=0\}. 
\end{align*}
Since $\tT_{\le 1} \subset \Coh(X)$ is closed under quotients
and $\Coh(X)$ is noetherian,
the pair $(\tT_{\le 1}, \fF')$ forms a torsion pair 
of $\Coh(X)$ (cf.~\cite[Lemma~2.15 (i)]{Tcurve2}). 
We set 
\begin{align}\label{def:C}
\aA \cneq \langle \fF', \tT_{\le 1}[-1] \rangle_{\rm{ex}} \subset D^b \Coh(X). 
\end{align}
Note that $\aA$ is the heart of a bounded t-structure on $D^b \Coh(X)$, 
and 
\begin{align*}
\aA \cap D^b \Coh_{\le 1}(X/Y)=\bB_{\le 1}[-1].
\end{align*}
We apply Proposition~\ref{prop:t} below for $\dD=D^b \Coh(X)$, 
$L=\oO_X$, $\dD'=D^b \Coh_{\le 1}(X/Y)$
and $\aA'=\bB_{\le 1}[-1]$. 
It is obvious that $\aA'$ is closed under subobjects and 
quotients in 
$\aA$. The condition (\ref{vanish:F})
below
is equivalent to the vanishings
\begin{align*}
\Hom(\oO_X, \fF)=0, \ \Hom(\tT_{\le 1}[-1], \oO_X)=0. 
\end{align*}
The vanishing of $\Hom(\oO_X, \fF)$
is equivalent to 
the vanishing of $\Hom(\oO_X, i_{\ast}F)$
for any 
$\mu$-semistable 
$F \in \Coh(\mathbb{P}^2)$
with $\mu(F)\le -1/2$. 
This is equivalent to 
the vanishing of 
$\Hom(\oO_D, F)$, 
which 
follows from
the $\mu$-semsitability 
of $F$ and the inequalities
\begin{align*}
\mu(\oO_D)=0>-\frac{1}{2} \ge \mu(F).
\end{align*} 
The vanishing of $\Hom(\tT_{\le 1}[-1], \oO_X)$
 is equivalent to 
the vanishings 
\begin{align}\notag
\Hom(G[-1], \oO_X)=0, \ 
\Hom(i_{\ast}T[-1], \oO_X)=0
\end{align}
for any $G \in \Coh_{\le 1}(X)$ and 
$\mu$-semistable 
$T \in \Coh(\mathbb{P}^2)$
with $\mu(T)>-1/2$. 
The vanishing of $\Hom(G[-1], \oO_X)$
is equivalent to the vanishing 
of $H^2(X, G)$, which is obvious.
The vanishing of $\Hom(i_{\ast}T[-1], \oO_X)$
is equivalent to  
the vanishing of $\Hom(T, \omega_{\mathbb{P}^2})$, 
which 
follows from the 
$\mu$-semistability of $T$
and the inequalities 
\begin{align*}
\mu(T)>-\frac{1}{2} >-3=\mu(\omega_{\mathbb{P}^2}).
\end{align*}
\end{proof}
We have used the following result proved in~\cite{Tcurve1}: 
\begin{prop}\emph{(\cite[Proposition~3.6]{Tcurve1})}\label{prop:t}
Let $\dD$ be a $\mathbb{C}$-linear triangulated category 
and $\aA \subset \dD$ the heart of a bounded t-structure on $\dD$. 
Take $L \in \aA$ such that $\End(L)=\mathbb{C}$ and 
a full triangulated subcategory $\dD' \subset \dD$ satisfying the 
following conditions:
\begin{itemize}
\item The category $\aA' \cneq \aA \cap \dD'$ is the heart of a bounded
t-structure on $\dD'$, which is closed under subobjects and quotients 
in $\aA$. 
\item For any object $F \in \aA'$, we have
\begin{align}\label{vanish:F}
\Hom(L, F)=\Hom(F, L)=0.
\end{align}
\end{itemize}
Let $\dD_{L} \subset \dD$ be the triangulated subcategory 
defined by 
$\dD_{L} \cneq \langle L, \dD' \rangle_{\rm{tr}}$. 
Then $\aA_{L} \cneq \dD_{L} \cap \aA$ is the heart of a bounded t-structure on $\dD_{L}$, which satisfies 
$\aA_{L}=\langle L, \aA' \rangle_{\rm{ex}}$. 
\end{prop}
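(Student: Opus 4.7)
The plan is to realise the desired t-structure on $\dD_L$ as the restriction of the ambient $\aA$-t-structure on $\dD$. Define
\[
\dD_L^{\le 0} \cneq \dD_L \cap \aA^{\le 0}, \qquad \dD_L^{\ge 0} \cneq \dD_L \cap \aA^{\ge 0}.
\]
The Hom-vanishing $\Hom(\dD_L^{\le 0}, \dD_L^{\ge 1}) = 0$ and shift-stability are inherited from $\dD$, so the only axiom that requires work is that every $E \in \dD_L$ fits into an $\aA$-truncation triangle $\tau^{\le 0}E \to E \to \tau^{\ge 1}E$ whose outer terms still lie in $\dD_L$. Equivalently, I must show $\hH^i_{\aA}(E) \in \dD_L$ for all $i$, and in fact that these cohomologies land in $\langle L, \aA' \rangle_{\rm{ex}}$; this will simultaneously identify the heart.

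As a preparatory step, I would verify that for $F \in \dD'$ one has $\hH^i_{\aA}(F) \in \aA'$. Since $\aA'$ is a heart of a bounded t-structure on $\dD'$ that is closed under $\aA$-subobjects and $\aA$-quotients, the $\aA'$-cohomology filtration of $F$ also realises $F$ as an iterated $\aA$-extension of objects of $\aA'$; uniqueness of t-structure cohomology then gives $\hH^i_{\aA}(F) = \hH^i_{\aA'}(F) \in \aA'$. In particular the ambient truncation functors preserve $\dD'$.

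The main step proceeds by induction on the number of distinguished triangles required to build $E \in \dD_L$ from $L$ and objects of $\dD'$. The base cases $E = L[n]$ (trivial, as $L \in \aA$ with $\End(L) = \mathbb{C}$) and $E \in \dD'$ (previous paragraph) are clear. For the inductive step, given a triangle $E_1 \to E \to E_2$ in $\dD_L$ with each $\hH^{\bullet}_{\aA}(E_j) \in \langle L, \aA' \rangle_{\rm{ex}}$, the long exact sequence in $\aA$-cohomology presents every $\hH^i_{\aA}(E)$ as an $\aA$-extension of an $\aA$-subobject of $\hH^i_{\aA}(E_2)$ by an $\aA$-quotient of $\hH^i_{\aA}(E_1)$. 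The hypotheses $\Hom(L, \aA') = \Hom(\aA', L) = 0$ together with $\End(L) = \mathbb{C}$ force $\langle L, \aA' \rangle_{\rm{ex}}$ to be closed under $\aA$-subobjects and $\aA$-quotients, so these pieces stay inside the extension closure.

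The main obstacle is precisely establishing this closure of $\langle L, \aA' \rangle_{\rm{ex}}$ under $\aA$-subobjects and quotients. I would attack it by noting that any $G \in \langle L, \aA' \rangle_{\rm{ex}}$ admits a short exact sequence $0 \to L^{\oplus a} \to G \to F \to 0$ with $F \in \aA'$; the vanishing $\Hom(L, \aA') = 0$ shows that this sequence is canonical, and applying $\Hom(L, -)$ and $\Hom(-, L)$ to an arbitrary $\aA$-sub or $\aA$-quotient of $G$ reduces the claim to the block-diagonal case, where $\End(L) = \mathbb{C}$ controls the $L$-summand and the closure hypothesis on $\aA' \subset \aA$ controls the $\aA'$-summand. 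Once this is in hand, boundedness of the induced t-structure on $\dD_L$ descends from that on $\dD$, and applying the cohomology analysis at $i = 0$ to an arbitrary $E \in \aA_L = \aA \cap \dD_L$ identifies $\aA_L$ with $\langle L, \aA' \rangle_{\rm{ex}}$, completing the proof.
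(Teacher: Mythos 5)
The paper itself does not prove this proposition --- it imports it verbatim from \cite{Tcurve1}, so your argument has to stand on its own. Your overall strategy is the right one and matches the standard proof: restrict the ambient truncation functors to $\dD_L$, reduce everything to showing $\hH^i_{\aA}(E) \in \langle L, \aA' \rangle_{\rm ex}$ for all $E \in \dD_L$ by induction on the number of cones, and feed the long exact sequence of $\aA$-cohomology into a closure property of $\langle L, \aA' \rangle_{\rm ex}$. Your preparatory step (that $\hH^i_{\aA}(F)=\hH^i_{\aA'}(F) \in \aA'$ for $F \in \dD'$, by uniqueness of the cohomology filtration) is also correct.

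The gap is in the pivotal closure lemma. The statement you assert --- that $\langle L, \aA' \rangle_{\rm ex}$ is closed under \emph{arbitrary} $\aA$-subobjects and $\aA$-quotients --- is false. Already with $\aA'=0$, $\dD'=0$, $\aA=\Coh(\mathbb{P}^2)$, $L=\oO_{\mathbb{P}^2}$ all hypotheses hold, yet an ideal sheaf $I_p \subset \oO_{\mathbb{P}^2}$ is an $\aA$-subobject of $L$ that does not lie in $\langle L \rangle_{\rm ex}$ (which consists only of finite direct sums of copies of $\oO_{\mathbb{P}^2}$). Your proof of the lemma also fails at its first step: an object $G \in \langle L, \aA' \rangle_{\rm ex}$ need \emph{not} fit into a sequence $0 \to L^{\oplus a} \to G \to F \to 0$ with $F \in \aA'$, because nothing in the hypotheses kills $\Ext^1(L, \aA')$. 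A nonsplit extension $0 \to A' \to G \to L \to 0$ with $A' \in \aA'$ satisfies $\Hom(L,G)=0$ while $\Hom(G,L)\neq 0$, so it admits no such presentation; and such extensions do occur in the intended application (e.g.\ $\Ext^1(\oO_X, i_{\ast}F[-1])=H^2(X,i_{\ast}F)$ is frequently nonzero for $F \in \fF$). What your long exact sequence argument actually requires is weaker: each $\hH^i_{\aA}(E)$ is an extension of the kernel and the cokernel of connecting morphisms between objects already known, by induction, to lie in $\langle L, \aA' \rangle_{\rm ex}$. So it suffices that $\langle L, \aA' \rangle_{\rm ex}$ be closed under kernels and cokernels of morphisms \emph{between its own objects}, i.e.\ that it be an abelian subcategory of $\aA$ closed under extensions. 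That weaker statement is true and is the real content of the proposition, but it needs a genuine argument --- controlling images of morphisms factor by factor using $\Hom(L,\aA')=\Hom(\aA',L)=0$ and $\End(L)=\mathbb{C}$ --- which your proposal replaces with a false stronger claim.
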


Let $\cC \subset D^b \Coh(X)$ be the subcategory 
defined by 
\begin{align*}
\cC \cneq \langle \oO_X(rD), \Coh_{\le 1}(X)[-1]: 
r\in \mathbb{Z}
 \rangle_{\rm{ex}}. 
\end{align*}
We will use the following 
lemmas on the 
abelian category $\aA_{X/Y}$. 
\begin{lem}\label{lem:cC}
The category $\cC$ is a subcategory of $\aA_{X/Y}$. 
In particular, for any $r \in \mathbb{Z}$
and a stable pair $(\oO_X \stackrel{s}{\to}F)$ on $X$, we have
\begin{align}\label{O(rD)}
\oO_X(rD) \otimes (\oO_X \stackrel{s}{\to} F) \in \aA_{X/Y}. 
\end{align}
\end{lem}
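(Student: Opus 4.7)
The plan is to first verify that the generators of $\cC$ lie in $\aA_{X/Y}$, and then invoke the extension closure of $\aA_{X/Y}$ as the heart of a bounded t-structure on $\dD_{X/Y}$ (Lemma~\ref{lem:t}). The inclusion $\Coh_{\le 1}(X)[-1] \subset \aA_{X/Y}$ is immediate from the definition of $\tT_{\le 1}$ in (\ref{def:TF}): since $\Coh_{\le 1}(X) \subset \tT_{\le 1} \subset \bB_{\le 1}$, its shift by $[-1]$ sits in $\bB_{\le 1}[-1] \subset \aA_{X/Y}$. The substantive point is to show that $\oO_X(rD) \in \aA_{X/Y}$ for every $r \in \mathbb{Z}$, which I do by induction on $|r|$ starting from the tautological case $r=0$.

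For the induction step, tensor $0 \to \oO_X(-D) \to \oO_X \to \oO_D \to 0$ by $\oO_X(rD)$ and use $D|_D \cong \oO_{\mathbb{P}^2}(-3)$ (coming from $D\cdot l=-3$) to obtain
\begin{align*}
0 \to \oO_X((r-1)D) \to \oO_X(rD) \to i_{\ast}\oO_{\mathbb{P}^2}(-3r) \to 0,
\end{align*}
in which $i_{\ast}\oO_{\mathbb{P}^2}(-3r)$ is $\mu$-semistable of slope $-3r$. For $r \ge 1$ we have $-3r<-1/2$, so $i_{\ast}\oO_{\mathbb{P}^2}(-3r) \in \fF$, hence $i_{\ast}\oO_{\mathbb{P}^2}(-3r) \in \bB_{\le 1}[-1] \subset \aA_{X/Y}$; together with $\oO_X((r-1)D)\in \aA_{X/Y}$ by induction, the induced distinguished triangle exhibits $\oO_X(rD)$ as an extension inside $\aA_{X/Y}$. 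For $r \le 0$ we have $-3r \ge 0 > -1/2$, so $i_{\ast}\oO_{\mathbb{P}^2}(-3r) \in \tT_{\le 1} \subset \bB_{\le 1}$ and hence $i_{\ast}\oO_{\mathbb{P}^2}(-3r)[-1] \in \aA_{X/Y}$; rotating the triangle to $i_{\ast}\oO_{\mathbb{P}^2}(-3r)[-1] \to \oO_X((r-1)D) \to \oO_X(rD)$ and using $\oO_X(rD) \in \aA_{X/Y}$ (descending induction from $r$ to $r-1$) displays $\oO_X((r-1)D)$ as an extension in $\aA_{X/Y}$. With all generators of $\cC$ placed in $\aA_{X/Y}$, extension closure yields $\cC \subset \aA_{X/Y}$.

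For the assertion (\ref{O(rD)}), view the stable pair $I^{\bullet}=(\oO_X \stackrel{s}{\to} F)$ through its defining distinguished triangle $I^{\bullet} \to \oO_X \to F$ in $D^b\Coh(X)$. Tensoring by the line bundle $\oO_X(rD)$ is exact and yields
\begin{align*}
\oO_X(rD) \otimes I^{\bullet} \to \oO_X(rD) \to F \otimes \oO_X(rD).
\end{align*}
Since $F\otimes \oO_X(rD) \in \Coh_{\le 1}(X)$ (tensoring with a line bundle preserves support dimension), its shift $F\otimes \oO_X(rD)[-1]$ lies in $\aA_{X/Y}$, and $\oO_X(rD) \in \aA_{X/Y}$ by what was just shown. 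Rotating to $F\otimes\oO_X(rD)[-1] \to \oO_X(rD)\otimes I^{\bullet} \to \oO_X(rD)$ presents $\oO_X(rD)\otimes I^{\bullet}$ as an extension of two objects of $\aA_{X/Y}$, hence it lies in $\aA_{X/Y}$. The only real obstacle is organising the two-sided induction and keeping track of whether $i_{\ast}\oO_{\mathbb{P}^2}(-3r)$ itself or its shift by $[-1]$ lands in $\aA_{X/Y}$; this is governed entirely by the sign of $-3r$ relative to the tilting threshold $-1/2$, and everything else is formal given Lemma~\ref{lem:t}.
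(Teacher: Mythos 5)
Your argument is correct and coincides with the paper's own proof: the same induction on $r$ in both directions using the triangles coming from $0 \to \oO_X((r-1)D) \to \oO_X(rD) \to \oO_D(-3r) \to 0$, with the sign of the slope $-3r$ relative to the threshold $-1/2$ deciding whether the restriction lands in $\fF$ or in $\tT_{\le 1}[-1]$, followed by the same extension triangle $\oO_X(rD)\otimes F[-1] \to \oO_X(rD)\otimes I^{\bullet} \to \oO_X(rD)$ for the stable pair. The only difference is cosmetic indexing in the descending induction.
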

\begin{proof}
It is enough to check that 
$\oO_X(rD) \in \aA_{X/Y}$
for any $r\in \mathbb{Z}$ to show 
the inclusion $\cC \subset \aA_{X/Y}$. 
Obviously this holds for $r=0$. If $r>0$, we have 
the distinguished triangle
\begin{align*}
\oO_X((r-1)D) \to \oO_X(rD) \to \oO_D(-3r). 
\end{align*}
Since $\oO_D(-3r) \in \fF \subset \aA_{X/Y}$, 
we have $\oO_X(rD) \in \aA_{X/Y}$ by the induction of $r$. 
If $r<0$, 
we have 
the distinguished triangle
\begin{align*}
\oO_D(-3r-3)[-1] \to
\oO_X(rD) \to \oO_X((r+1)D). 
\end{align*}
Since $\oO_D(-3r-3)[-1] \in \tT_{\le 1}[-1] \subset \aA_{X/Y}$, 
we have $\oO_X(rD) \in \aA_{X/Y}$ by the induction of $r$. 
Hence $\cC \subset \aA_{X/Y}$ holds. 
By the distinguished triangle
\begin{align*}
\oO_X(rD) \otimes F[-1]
 \to \oO_X(rD) \otimes (\oO_X \stackrel{s}{\to} F)
\to \oO_X(rD)
\end{align*}
the object (\ref{O(rD)}) is an object in $\cC$, 
hence an object in $\aA_{X/Y}$.  
\end{proof}

\begin{lem}\label{lem:filt}
For any $E \in \aA_{X/Y}$ with $\rank(E)=1$, there is 
a filtration $E_1 \subset E_2 \subset E_3=E$
in $\aA_{X/Y}$ such that
\begin{align}\label{filt1}
E_1 \in \fF, \ E_2/E_1 \in \cC, \ 
E/E_2 \in \tT_{\le 1}^{\rm{pure}}[-1]. 
\end{align}
Here $\tT_{\le 1}^{\rm{pure}}$
is the subcategory of $\tT_{\le 1}$ consisting of 
pure two dimensional sheaves. 
\end{lem}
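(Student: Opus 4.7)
The plan is to peel off the three pieces in order: the top $E/E_2 \in \tT_{\le 1}^{\rm{pure}}[-1]$ from the sheaf-cohomology decomposition of $E$, the bottom $E_1 \in \fF$ by a torsion-pair maximality argument, and the middle $E_2/E_1 \in \cC$ by classifying rank one torsion-free sheaves in $\dD_{X/Y}$. Every object $E \in \aA_{X/Y}$ has sheaf cohomology concentrated in degrees $0$ and $1$, with $\hH^0(E) \in \fF'$ (the torsion-free part used in the proof of Lemma~\ref{lem:t}) and $\hH^1(E) \in \tT_{\le 1}$; since $E$ and $\hH^1(E)[-1]$ both lie in $\dD_{X/Y}$, triangulated closure gives $\hH^0(E) \in \aA_{X/Y}$, and we obtain a short exact sequence $0 \to \hH^0(E) \to E \to \hH^1(E)[-1] \to 0$ in $\aA_{X/Y}$. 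Now decompose $\hH^1(E)$ via its maximal $\le 1$-dimensional subsheaf $T'$ and pure two-dimensional quotient $T''$; applying the torsion-pair property $\Hom(\tT_{\le 1}, \fF) = 0$ to the lowest-slope HN factor of $T''$ forces $T'' \in \tT_{\le 1}^{\rm{pure}}$. Setting $E_2 \cneq \ker(E \twoheadrightarrow \hH^1(E)[-1] \twoheadrightarrow T''[-1])$ in $\aA_{X/Y}$ yields $E/E_2 = T''[-1]$ together with a short exact sequence $0 \to \hH^0(E) \to E_2 \to T'[-1] \to 0$.

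Next I take $E_1 \subset \hH^0(E)$ to be the maximal subsheaf belonging to $\fF$, which exists by noetherianity of $\Coh(X)$ and extension-closure of $\fF$. The key observation is that the condition $\hH^0(E) \in \fF'$ forces every non-zero subsheaf of $\hH^0(E)$ to be either rank one torsion-free or to lie in $\fF$: any $0$- or $1$-dimensional subsheaf, or any pure two-dimensional subsheaf contained in $\tT_{\le 1}$, would contradict $\Hom(\tT_{\le 1}, \hH^0(E)) = 0$, while any impure two-dimensional subsheaf would contain a forbidden lower-dimensional subsheaf, and the remaining case of a pure two-dimensional subsheaf with all HN slopes $\le -1/2$ lies in $\fF$. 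Consequently, if $\hH^0(E)/E_1$ admitted a non-zero torsion subsheaf $\bar S$, its preimage $S \subset \hH^0(E)$ would be a two-dimensional subsheaf, hence in $\fF$ by the above dichotomy, violating the maximality of $E_1$. Therefore $\hH^0(E)/E_1$ is a rank one torsion-free sheaf lying in $\dD_{X/Y}$.

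It remains to show that every rank one torsion-free $\sS \in \dD_{X/Y}$ lies in $\cC$. The double dual $L = \sS^{**}$ is a line bundle with $L/\sS \in \Coh_{\le 1}(X) \subset \dD_{X/Y}$, so $L \in \dD_{X/Y}$. Since the triangulated generators $\oO_X$ and $D^b \Coh_{\le 1}(X/Y)$ of $\dD_{X/Y}$ contribute two-dimensional cycle classes only in $\mathbb{Z}[D]$, we have $c_1(L) \in \mathbb{Z}[D]$; combined with $\Pic^0(X) = 0$ (from $H^1(\oO_X) = 0$), this forces $L \cong \oO_X(rD)$ for some $r \in \mathbb{Z}$. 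The short exact sequence $0 \to \oO_X(rD)|_Z[-1] \to \sS \to \oO_X(rD) \to 0$ in $\aA_{X/Y}$, where $Z$ is the closed subscheme defined by $L/\sS$, then exhibits $\sS$ as an extension of the two kinds of generators of $\cC$, so $\sS \in \cC$. Applying this to $\sS = \hH^0(E)/E_1$ and combining with $T'[-1] \in \Coh_{\le 1}(X)[-1] \subset \cC$ and extension-closure of $\cC$ gives $E_2/E_1 \in \cC$. The main obstacle is this last classification step, which relies on the Chern-character constraint from the triangulated generators of $\dD_{X/Y}$ together with the vanishing of $\Pic^0(X)$; the torsion-freeness of $\hH^0(E)/E_1$ is the other delicate ingredient and crucially uses both the $\fF'$ condition and the maximality of $E_1$.
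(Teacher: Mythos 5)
Your proof is correct and follows essentially the same route as the paper's: decompose $E$ via the tilt $\aA=\langle \fF',\tT_{\le 1}[-1]\rangle_{\rm ex}$ into $\hH^0(E)\in\fF'$ and $\hH^1(E)\in\tT_{\le 1}$, peel off the pure two-dimensional quotient $T''[-1]$ of $\hH^1(E)[-1]$ on top and the torsion part of $\hH^0(E)$ (which is forced into $\fF$) at the bottom, and exhibit the middle as an extension of $T'[-1]$ by $\oO_X(rD)\otimes I_C$. The only cosmetic differences are that you take $E_1$ to be the maximal $\fF$-subsheaf rather than the torsion part of $\hH^0(E)$ (these coincide, since that torsion part is pure two-dimensional supported on $D$ with all slopes $\le -1/2$) and that you justify $\det(\hH^0(E)/E_1)\cong\oO_X(rD)$ via the Chern class constraint and $H^1(\oO_X)=0$ where the paper simply observes the determinant is trivial off $D$.
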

\begin{proof} 
Recall that the category $\aA_{X/Y}$
is a subcategory of $\aA$ defined by (\ref{def:C}). 
By the definition of $\aA$, there is an 
exact sequence in $\aA$
\begin{align}\label{FET-1}
0 \to F \to E \to T[-1] \to 0
\end{align}
for $F \in \fF'$ and $T \in \tT_{\le 1}$.
We have the exact sequences of sheaves
\begin{align*}
&0 \to F' \to F \to F'' \to 0 \\
&0 \to T' \to T \to T'' \to 0
\end{align*}
where $F'$ is the torsion part of $F$
and $T'$ is the maximal 
subsheaf of $T$ contained in $\Coh_{\le 1}(X)$. 
Since $E \in \aA_{X/Y}$,
the sheaf $F'$ is supported on $D$, 
and satisfies $\Hom(\tT_{\le 1}, F')=0$.
It follows that $F' \in \fF$, 
and also 
$T'' \in \tT_{\le 1}^{\rm{pure}}$
holds by the construction of $T'$. 
The sheaf $F''$ is 
a torsion free rank one sheaf on $X$, 
whose determinant is trivial outside $D$. 
Hence $F''$ is written as $\oO_X(rD) \otimes I_C$
for some $r\in \mathbb{Z}$ and a subscheme $C \subset X$
with $\dim C \le 1$. 
We set $E_1=F'$ and 
$E_2$ to be the kernel
of the composition in $\aA$
\begin{align*}
E \twoheadrightarrow T[-1] \twoheadrightarrow T''[-1].
\end{align*}
Then $E_2/E_1$ fits into the exact sequence in $\aA$
\begin{align*}
0 \to F'' \to E_2/E_1 \to T'[-1] \to 0. 
\end{align*}
Since $F'', T'[-1] \in \cC$, we have
$E_2/E_1 \in \cC$, showing that $E_{\bullet}$
is a desired filtration. 
\end{proof}
We also have the following positivity lemma: 
\begin{lem}\label{lem:positive}
For any $E \in \aA_{X/Y}$, we have 
$\rank(E)\ge 0$
and 
\begin{align}\label{positive}
-2\ch_2(E) -\frac{2}{3}D \ch_1(E) \ge 0. 
\end{align}
Here for $\beta \in H^4(X)$, 
$\beta>0$ means that 
it is 
a numerical class of an effective integral 
one cycle on
$X$. 
\end{lem}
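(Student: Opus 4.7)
The plan is to reduce the statement to checking the two inequalities on a set of generators of $\aA_{X/Y}$ and then verify each case directly using the computation of $i_{\sharp}$ in formula~(\ref{iflat}).

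Unpacking the definitions of $\aA_{X/Y}$, $\bB_{\le 1}$, $\tT_{\le 1}$, and $\fF$, the category $\aA_{X/Y}$ is the extension closure in $D^b\Coh(X)$ of the following four families of objects: (a) the structure sheaf $\oO_X$; (b) sheaves $i_{\ast}F$ with $F\in\Coh(\mathbb{P}^2)$ $\mu$-semistable and $\mu(F)\le -1/2$; (c) shifts $G[-1]$ with $G\in\Coh_{\le 1}(X)$; and (d) shifts $i_{\ast}T[-1]$ with $T\in\Coh(\mathbb{P}^2)$ $\mu$-semistable and $\mu(T)>-1/2$. Since both $\rank$ and $\ch$ are additive on triangles, and since $\mathbb{Z}_{\ge 0}$ and the monoid of effective integral $1$-cycle classes in $H^4(X)$ are closed under addition, it is enough to verify the inequalities on each of (a)--(d).

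For (a) we have $\rank(\oO_X)=1$ and $\ch_1(\oO_X)=\ch_2(\oO_X)=0$, so both claims hold trivially. For (c) we have $\rank(G[-1])=0$, $\ch_1(G[-1])=0$ and $\ch_2(G[-1])=-[G]$ where $[G]$ is the effective cycle supported on the curves on which $G$ lives; hence $-2\ch_2(G[-1])=2[G]\ge 0$. For (b) and (d), writing $r'=\rank(F)$ and $c'=c_1(F)\cdot h$ (and analogously for $T$), formula~(\ref{iflat}) together with $D^2=-3[l]$ gives, after a short calculation,
\begin{align*}
-2\ch_2(i_{\ast}F)-\tfrac{2}{3}D\,\ch_1(i_{\ast}F)&=-(r'+2c')[l],\\
-2\ch_2(i_{\ast}T[-1])-\tfrac{2}{3}D\,\ch_1(i_{\ast}T[-1])&=(r'+2c')[l].
\end{align*}
In both cases $\rank=0$. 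The slope conditions $\mu(F)=c'/r'\le -1/2$ and $\mu(T)=c'/r'>-1/2$ translate into $-(r'+2c')\ge 0$ and $r'+2c'>0$, respectively, so the right-hand sides are non-negative integer multiples of $[l]$ and in particular effective. Note that only the case $r'>0$ is relevant: rank-zero sheaves on $\mathbb{P}^2$ have $\mu=\infty$, so their pushforwards already lie in $\Coh_{\le 1}(X)$ and are absorbed into (c).

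There is no real obstacle; the only care needed is in handling the boundary $\mu=-1/2$ (automatic since the resulting coefficient is zero, hence effective by convention) and in observing that generators of $\tT_{\le 1}$ with $\rank=0$ on $\mathbb{P}^2$ reduce to class (c). Once the four generator cases are settled, the general statement follows by a trivial induction on the length of an extension filtration of $E\in\aA_{X/Y}$ in terms of these generators.
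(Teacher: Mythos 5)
Your proof is correct and follows essentially the same route as the paper: the paper's (much terser) argument likewise reduces to the generators of $\aA_{X/Y}$ and rests on the computation $\ch_2(i_{\ast}F)+\frac{D}{3}\ch_1(i_{\ast}F)=(\frac{r}{2}+c)[l]$, which is $-\frac{1}{2}$ times your expression, with the sign controlled by the slope thresholds $\mu\le -1/2$ and $\mu>-1/2$ exactly as you describe. Your write-up just makes explicit the case analysis and the absorption of rank-zero generators into $\Coh_{\le 1}(X)[-1]$ that the paper leaves implicit.
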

\begin{proof}
The first statement is obvious. 
For the second statement, 
note that $F \in \Coh(\mathbb{P}^2)$
with $\ch(F)=(r, c, m)$ 
satisfies 
\begin{align*}
\ch_2(i_{\ast}F)+\frac{D}{3} \ch_1(i_{\ast}F)=\left(\frac{r}{2}+c\right)[l]. 
\end{align*}
Therefore the positivity of (\ref{positive}) follows 
from the construction of $\aA_{X/Y}$. 
\end{proof}

\subsection{Abelian category $\aA_{\yY}$}
Let $\yY$ be the orbifold (\ref{yYY}) 
which is derived equivalent to $X$. 
We define 
\begin{align*}
\dD_{\yY} \cneq \langle \oO_{\yY}, D^b \Coh_{\le 1}(\yY) 
\rangle_{\rm{tr}} \subset D^b \Coh(\yY). 
\end{align*}
By Lemma~\ref{lem:ST} and the diagram (\ref{cohd}), 
the equivalence (\ref{Phi}) restricts to 
the equivalence
\begin{align*}
\Phi \colon \dD_{\yY} \stackrel{\sim}{\to} \dD_{X/Y}. 
\end{align*}
We define the following subcategory of $\dD_{\yY}$
\begin{align*}
\aA_{\yY} \cneq \langle 
\oO_{\yY}, \Coh_{\le 1}(\yY)[-1] \rangle_{\rm{ex}} \subset \dD_{\yY}. 
\end{align*}
\begin{lem}\label{lem:AY}
There is a bounded t-structure on $\dD_{\yY}$ whose 
heart is given by $\aA_{\yY}$. 
\end{lem}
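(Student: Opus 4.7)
The plan is to mimic the proof of Lemma~\ref{lem:t}, invoking Proposition~\ref{prop:t} with $\dD = D^b\Coh(\yY)$, $L = \oO_{\yY}$, and $\dD' = D^b\Coh_{\le 1}(\yY)$. The one input missing from Proposition~\ref{prop:t} is an ambient heart $\aA \subset D^b\Coh(\yY)$ with the property that $\aA \cap \dD' = \Coh_{\le 1}(\yY)[-1]$. I will construct $\aA$ by tilting $\Coh(\yY)$ and then verify the three hypotheses of Proposition~\ref{prop:t}: membership of $\oO_{\yY}$ with $\End(\oO_{\yY})=\mathbb{C}$, the closure of $\aA'\cneq \Coh_{\le 1}(\yY)[-1]$ under subobjects and quotients in $\aA$, and the vanishing condition \eqref{vanish:F}.

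For the construction, note that $\Coh_{\le 1}(\yY)$ is closed under subobjects and quotients in the noetherian category $\Coh(\yY)$, so by the argument used in the proof of Lemma~\ref{lem:t}, setting
\[
\fF^{\yY} \cneq \{E\in \Coh(\yY) : \Hom(\Coh_{\le 1}(\yY),E)=0\}
\]
yields a torsion pair $(\Coh_{\le 1}(\yY),\fF^{\yY})$ on $\Coh(\yY)$. Let $\aA \cneq \langle \fF^{\yY},\Coh_{\le 1}(\yY)[-1]\rangle_{\rm{ex}}$ be the tilted heart. Since $\fF^{\yY}\cap \Coh_{\le 1}(\yY)=0$, a standard cohomology-sequence argument applied to a short exact sequence $0\to T[-1]\to E\to F\to 0$ in $\aA$ with $E\in D^b\Coh_{\le 1}(\yY)$ forces $F=0$, giving $\aA\cap \dD' = \Coh_{\le 1}(\yY)[-1]$. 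The closure of $\Coh_{\le 1}(\yY)[-1]$ under subobjects and quotients in $\aA$ follows from the long exact sequence of cohomology with respect to the standard t-structure on $D^b\Coh(\yY)$, again using $\fF^{\yY}\cap\Coh_{\le 1}(\yY)=0$.

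The only nontrivial verification is condition \eqref{vanish:F} for $F[-1]$ with $F\in \Coh_{\le 1}(\yY)$. The vanishing $\Hom(\oO_{\yY},F[-1])=0$ is automatic. For the vanishing of $\Hom(F[-1],\oO_{\yY}) = \Ext^1_{\yY}(F,\oO_{\yY})$, I would apply Serre duality on the Calabi--Yau 3-orbifold $\yY$ (so $\omega_{\yY}\cong \oO_{\yY}$), giving
\[
\Ext^1_{\yY}(F,\oO_{\yY}) \cong \Ext^2_{\yY}(\oO_{\yY},F)^{\vee} = H^2(\yY,F)^{\vee},
\]
which vanishes because $\dim\Supp(F)\le 1$. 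One also needs $\oO_{\yY}\in \aA$ (immediate, since $\oO_{\yY}\in \fF^{\yY}$ as a torsion-free sheaf) and $\End(\oO_{\yY})=\mathbb{C}$ (standard). With these checks in place, Proposition~\ref{prop:t} delivers the heart $\aA_{\yY} = \dD_{\yY}\cap \aA = \langle \oO_{\yY},\Coh_{\le 1}(\yY)[-1]\rangle_{\rm{ex}}$ on $\dD_{\yY}$, as required. The step I expect to require the most care is the Ext-vanishing via Serre duality, since one must make sure the duality formulation is valid on the Deligne--Mumford stack $\yY$; but as $\yY$ is smooth and proper with trivial canonical bundle, this is standard.
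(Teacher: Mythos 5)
Your proposal is correct and follows essentially the same route as the paper, which simply defers to the argument of \cite[Lemma~3.5]{Tcurve1}: tilt $\Coh(\yY)$ at the torsion pair $(\Coh_{\le 1}(\yY),\fF^{\yY})$ and apply Proposition~\ref{prop:t} with $L=\oO_{\yY}$, the key check being $\Ext^1_{\yY}(F,\oO_{\yY})=0$ for $F\in\Coh_{\le 1}(\yY)$. (Only a cosmetic slip: in the tilted heart $\langle\fF^{\yY},\Coh_{\le 1}(\yY)[-1]\rangle_{\rm ex}$ the short exact sequence runs $0\to F\to E\to T[-1]\to 0$, with the torsion-free part as the subobject; this does not affect your argument.)
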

\begin{proof}
In~\cite[Lemma~3.5]{Tcurve1}, the same
statement was proved 
 for non-orbifold Calabi-Yau 3-folds
using Proposition~\ref{prop:t}. 
The same argument of~\cite[Lemma~3.5]{Tcurve1}
is applied without any modification. 
\end{proof}
\begin{lem}
For an orbifold stable pair 
$(\oO_{\yY} \stackrel{s}{\to} F)$ on $\yY$, we have
\begin{align}\label{AcapA}
\Phi(\oO_{\yY} \stackrel{s}{\to} F) \in \fF^{\sharp} \cneq
 \Phi(\aA_{\yY}) \cap \aA_{X/Y}. 
\end{align}
\end{lem}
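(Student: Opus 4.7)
The plan is to verify the two containments $\Phi(\oO_{\yY}\stackrel{s}{\to}F)\in\Phi(\aA_{\yY})$ and $\Phi(\oO_{\yY}\stackrel{s}{\to}F)\in\aA_{X/Y}$ separately, using in both cases the distinguished triangle
\begin{align*}
F[-1]\to (\oO_{\yY}\stackrel{s}{\to}F)\to \oO_{\yY}
\end{align*}
which expresses the two-term complex as an extension in the relevant derived category. Since $\aA_{\yY}$ and $\aA_{X/Y}$ are hearts of bounded t-structures (Lemma~\ref{lem:AY} and Lemma~\ref{lem:t} respectively), they are extension closed in the ambient triangulated categories, and it will suffice to check that each outer term lies in the required heart.

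For the first containment, I would first note that since $F$ is pure of dimension one, $F\in\Coh_{\le 1}(\yY)$, hence $F[-1]\in\Coh_{\le 1}(\yY)[-1]\subset\aA_{\yY}$ by definition of $\aA_{\yY}$. Combined with $\oO_{\yY}\in\aA_{\yY}$, the triangle places $(\oO_{\yY}\stackrel{s}{\to}F)$ in $\aA_{\yY}$ by extension closure, and applying $\Phi$ then gives membership in $\Phi(\aA_{\yY})$.

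For the second containment, apply $\Phi$ to the triangle, yielding
\begin{align*}
\Phi(F)[-1]\to \Phi(\oO_{\yY}\stackrel{s}{\to}F)\to \Phi(\oO_{\yY}).
\end{align*}
By Lemma~\ref{lem:ST} we have $\Phi(\oO_{\yY})\cong\oO_X\in\aA_{X/Y}$. Purity of $F$ together with Corollary~\ref{cor:pure} gives $\Phi(F)\in\bB_{\le 1}$, so $\Phi(F)[-1]\in\bB_{\le 1}[-1]\subset\aA_{X/Y}$. Both outer terms therefore lie in $\aA_{X/Y}$, and extension closure finishes the argument.

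I do not anticipate any substantive obstacle: everything reduces to the extension-triangle for a stable pair, Lemma~\ref{lem:ST}, and Corollary~\ref{cor:pure}. The only point requiring mild care is to confirm that the purity hypothesis in the definition of an orbifold stable pair is what triggers Corollary~\ref{cor:pure}; this is why I separate the cokernel-surjectivity condition (which plays no role here) from the purity of $F$ in the statement.
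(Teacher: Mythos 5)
Your proof is correct and follows essentially the same route as the paper: the paper likewise reduces the claim to the distinguished triangle $\Phi(F)[-1]\to\Phi(\oO_{\yY}\to F)\to\oO_X$, invokes Corollary~\ref{cor:pure} via the purity of $F$ to place $\Phi(F)[-1]$ in $\fF^{\sharp}$, and concludes by extension closure. Your version merely spells out the two containments separately, which is a harmless expansion of the same argument.
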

\begin{proof}
Since $F$ is pure one dimensional, 
we have $\Phi(F)[-1] \in \fF^{\sharp}$
by  
Corollary~\ref{cor:pure}. 
Then the result follows from 
the distinguished triangle
\begin{align*}
\Phi(F)[-1] \to \Phi(\oO_{\yY} \stackrel{s}{\to} F) \to \oO_X. 
\end{align*}
\end{proof}
The category $\tT^{\dag}$ defined by (\ref{def:Tdag}) 
is a subcategory of 
$\Phi(\Coh_{\le 1}(\yY))$, hence we have
$\tT^{\dag}[-1] \subset \Phi(\aA_{\yY})$. 
The relationship between $\aA_{X/Y}$ and $\aA_{\yY}$ is 
given as follows: 
\begin{lem}\label{lem:AYtor}
The subcategory
$\tT^{\dag}[-1] \subset \Phi(\aA_{\yY})$
fits into a torsion pair $(\tT^{\dag}[-1], \fF^{\sharp})$ 
on $\Phi(\aA_{\yY})$ such that 
\begin{align*}
\aA_{X/Y}=\langle \fF^{\sharp}, \tT^{\dag}[-2] \rangle_{\rm{ex}}. 
\end{align*}
\end{lem}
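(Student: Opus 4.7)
My plan is to recognize the asserted torsion pair as the canonical one arising from Happel--Reiten--Smal\o{} tilting between two hearts of bounded t-structures on $\dD_{X/Y}$. Both hearts admit compatible presentations: using $\bB_{\le 1}=\langle \fF^{\dag}_{\le 1},\, \tT^{\dag}[-1] \rangle_{\rm{ex}}$ from Proposition~\ref{wtor}, one has
$$\Phi(\aA_{\yY}) = \langle \oO_X,\, \fF^{\dag}_{\le 1}[-1],\, \tT^{\dag}[-1] \rangle_{\rm{ex}}, \quad \aA_{X/Y} = \langle \oO_X,\, \fF^{\dag}_{\le 1}[-1],\, \tT^{\dag}[-2] \rangle_{\rm{ex}}.$$
Since $\oO_X$ and $\fF^{\dag}_{\le 1}[-1]$ lie in $\aA_{X/Y}$ while $\tT^{\dag}[-1] = \tT^{\dag}[-2][1]$ lies in $\aA_{X/Y}[1]$, one obtains the inclusion $\Phi(\aA_{\yY}) \subseteq \langle \aA_{X/Y},\, \aA_{X/Y}[1] \rangle_{\rm{ex}}$. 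By the standard tilting criterion (cf.~\cite[Proposition~2.3.2]{BMT}), this inclusion automatically yields a torsion pair $(\tT_{\rm{can}}, \fF_{\rm{can}})$ on $\Phi(\aA_{\yY})$ with $\tT_{\rm{can}} = \Phi(\aA_{\yY}) \cap \aA_{X/Y}[1]$, $\fF_{\rm{can}} = \Phi(\aA_{\yY}) \cap \aA_{X/Y} = \fF^{\sharp}$, and tilt $\aA_{X/Y} = \langle \fF^{\sharp},\, \tT_{\rm{can}}[-1] \rangle_{\rm{ex}}$.

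What remains is to identify $\tT_{\rm{can}} = \tT^{\dag}[-1]$. The inclusion $\tT^{\dag}[-1] \subseteq \tT_{\rm{can}}$ is immediate from the above. For the reverse, given $E = \Phi(E') \in \tT_{\rm{can}}$ with $E' \in \aA_{\yY}$, I would compare ranks: by induction along extension filtrations with pieces $\oO_{\yY}$ and $\Coh_{\le 1}(\yY)[-1]$, using the long exact sequences of sheaf cohomology on $\yY$, every object of $\aA_{\yY}$ has torsion-free $\hH^0$, whence $\rank(E) = \rank(E') \ge 0$; on the other hand, $E[-1] \in \aA_{X/Y}$ gives $\rank(E) \le 0$ by Lemma~\ref{lem:positive}. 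Hence $\rank(E) = 0$, forcing $\hH^0(E') = 0$ and $E' \in \Coh_{\le 1}(\yY)[-1]$. It follows that $E[1] = \Phi(E'[1]) \in \Phi(\Coh_{\le 1}(\yY)) \subset D^b \Coh_{\le 1}(X/Y)$, hence $E \in D^b \Coh_{\le 1}(X/Y)$. Combining this with $E \in \aA_{X/Y}[1]$ and the identification $\aA_{X/Y}[1] \cap D^b \Coh_{\le 1}(X/Y) = \bB_{\le 1}$ (a direct check from the generators and the rank of $\oO_X[1]$) yields $E \in \bB_{\le 1}$, and therefore $E[1] \in \Phi(\Coh_{\le 1}(\yY)) \cap \bB_{\le 1}[1] = \tT^{\dag}$, as desired.

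The main obstacle is the rank analysis in this reverse inclusion, particularly the torsion-freeness of $\hH^0$ across all of $\aA_{\yY}$ and the intersection computation $\aA_{X/Y}[1] \cap D^b \Coh_{\le 1}(X/Y) = \bB_{\le 1}$; the Happel--Reiten--Smal\o{} step is purely formal once the containment $\Phi(\aA_{\yY}) \subseteq \langle \aA_{X/Y},\, \aA_{X/Y}[1] \rangle_{\rm{ex}}$ has been verified.
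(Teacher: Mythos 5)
Your proposal is correct and follows essentially the same route as the paper: both deduce from Proposition~\ref{wtor} that $\Phi(\aA_{\yY})\subset\langle \aA_{X/Y}[1],\aA_{X/Y}\rangle_{\rm{ex}}$, invoke the standard tilting criterion to get the canonical torsion pair, identify the free part with $\fF^{\sharp}$ by definition, and compute the torsion part as $\Phi(\Coh_{\le 1}(\yY))[-1]\cap\bB_{\le 1}=\tT^{\dag}[-1]$. Your rank argument simply spells out the identification of the torsion part that the paper states without detail.
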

\begin{proof}
By Proposition~\ref{wtor}, we have
\begin{align*}
\Phi(\aA_{\yY}) \subset
\langle \aA_{X/Y}[1], \aA_{X/Y} \rangle_{\rm{ex}}. 
\end{align*}
Hence $\Phi(\aA_{\yY})$ and $\aA_{X/Y}$
are related by a tilting
for some torsion pair of $\Phi(\aA_{\yY})$. 
The free part is 
$\Phi(\aA_{\yY}) \cap \aA_{X/Y}$, 
which coincides with $\fF^{\sharp}$ by 
its definition (\ref{AcapA}). 
The torsion part is 
\begin{align*}
\Phi(\aA_{\yY}) \cap \aA_{X/Y}[1]&=
\Phi(\Coh_{\le 1}(\yY))[-1] \cap \bB_{\le 1} \\
&=\tT^{\dag}[-1]. 
\end{align*}
Hence we obtain the result.  
\end{proof}

\subsection{Weak stability conditions on $\dD_{X/Y}$}
We construct a one parameter family of weak stability 
conditions on $\dD_{X/Y}$ in the sense of~\cite{Tcurve1}, 
using the t-structures in the previous subsections. 
Let $\Gamma$ be the free abelian group
defined by
\begin{align*}
\Gamma^{} \cneq \Imm (K(\dD_{X/Y}) \to N(X)). 
\end{align*}
The map $\Gamma^{} \to \mathbb{Z}$
sending $F$ to $\rank(F)$ 
has a splitting by $1 \mapsto [\oO_X]$. 
Hence we have 
\begin{align}\label{isom:Gamma}
\Gamma^{} \cong \mathbb{Z} \oplus N_{\le 1}(X/Y) 
\end{align}
and the natural map 
$\cl \colon K(\dD_{X/Y}) \to \Gamma$
given by 
$F \mapsto [F]$ is identified with 
the map 
\begin{align}\label{def:cl}
\cl(F)=(\rank(F), [F]-\rank(F)[\oO_X]). 
\end{align}
We set $\Gamma_0 = N_0(X/Y)$, 
$\Gamma_1 = N_{\le 1}(X/Y)$, so 
that we have the filtration
\begin{align}\label{fGamma}
0=\Gamma_{-1} \subset 
\Gamma_0 \subset \Gamma_1 \subset \Gamma_2=\Gamma^{}. 
\end{align}
The subquotients of the above
filtration are described by the 
isomorphisms (cf.~Remark~\ref{rmk:K}): 
\begin{align}\label{isom:quot}
\rank \colon \Gamma_2/\Gamma_1 \stackrel{\sim}{\to}
\mathbb{Z}, \quad 
f_{\ast} \colon \Gamma_1/\Gamma_0 \stackrel{\sim}{\to} N_{1}(Y), \ 
i_{\ast} \colon K(\mathbb{P}^2) \stackrel{\sim}{\to} \Gamma_0. 
\end{align}
Here $N_{1}(Y) \subset H_2(Y, \mathbb{Z})$
is the subgroup generated by algebraic one cycles on $Y$.

Let us take an element
\begin{align}\label{Zj}
Z=\{Z_j\}_{j=0}^{2} \in 
\prod_{j=0}^{2} 
\Hom_{\mathbb{Z}}(\Gamma_{j}/\Gamma_{j-1}, \mathbb{C}). 
\end{align}
For $v \in \Gamma^{}$,
we take unique $j$ such that $v \in \Gamma_{j} \setminus \Gamma_{j-1}$
and set
\begin{align*}
Z(v) \cneq Z_{j}(\overline{v}) \in \mathbb{C}. 
\end{align*} 
Here $\overline{v} \in \Gamma_{j}/\Gamma_{j-1}$ is the 
class of $v$ in $\Gamma_{j}/\Gamma_{j-1}$. 
For $E \in \dD_{X/Y}$, we write 
$Z(\cl(E))$ just as $Z(E)$ for simplicity. 
\begin{defi}\emph{(\cite{Tcurve1})}
A \textit{weak stability condition} on $\dD_{X/Y}$ with 
respect to the filtration $\Gamma_{\bullet}$ is 
data of $(Z, \aA)$, 
where $Z$ is an element (\ref{Zj}), 
$\aA \subset \dD_{X/Y}$ is the heart 
of a bounded t-structure satisfying 
\begin{align}\label{positive2}
Z(\aA \setminus \{0\}) \subset \mathbb{H} 
\cneq \{ r \exp(\pi \phi \sqrt{-1}) : 
r>0, 0<\phi \le 1\}. 
\end{align}
The data $(Z, \aA)$ should also satisfy 
other technical conditions, called \textit{Harder-Narasimhan 
property} and \textit{support property}
\emph{(cf.~\cite[Section~2]{Tcurve1})}.  
\end{defi}
An object $E \in \aA$ is called 
$Z$-\textit{(semi)stable} if for any 
subobject $0\neq F \subsetneq E$ in $\aA$, we have
the inequality
\begin{align*}
\arg Z(F)<(\le) \arg Z(E/F)
\end{align*}
in $(0, \pi]$.
Similarly to the space of Bridgeland 
stability conditions~\cite{Brs1}, 
the set of weak stability conditions
$\Stab_{\Gamma_{\bullet}}(\dD_{X/Y})$
has a structure of a complex manifold such that
the forgetting map
\begin{align*}
\Stab_{\Gamma_{\bullet}}(\dD_{X/Y})
\to \prod_{j=0}^{2} 
\Hom_{\mathbb{Z}}(\Gamma_{j}/\Gamma_{j-1}, \mathbb{C})
\end{align*}
sending $(Z, \aA)$ to $Z$ 
is a local homeomorphism. 

We fix $\psi \in (\pi/2, \pi)$ and 
an ample divisor $\omega$ on $Y$. 
For $t\in\mathbb{R}$, we construct
\begin{align*}
Z_t=\{Z_{j, t}\}_{j=0}^2
 \in \prod_{j=0}^{2} \Hom_{\mathbb{Z}}(\Gamma_{j}/\Gamma_{j-1}, 
\mathbb{C})
\end{align*}
via the isomorphisms (\ref{isom:quot})
in the following way: 
\begin{align}\notag
&Z_{2, t} \colon \mathbb{Z} \to \mathbb{C}, \
R \mapsto R \cdot \exp(\psi \sqrt{-1}) \\
\notag
&Z_{1, t} \colon N_1(Y) \to \mathbb{C}, \
\beta \mapsto -(\beta \cdot \omega) \sqrt{-1} \\
\label{Z0t}
&Z_{0, t} \colon K(\mathbb{P}^2) \to \mathbb{C}, \ 
F \mapsto \int_{\mathbb{P}^2}e^{h/2-t\sqrt{-1}} \ch(F). 
\end{align}
Here
$h=c_1(\oO_{\mathbb{P}^2}(1))$. 
For $(r, c, m) \in \mathbb{Q}^{3}$, 
we set
\begin{align*}
(\widehat{r}, \widehat{c}, \widehat{m}) \cneq
 \left(r, c+\frac{r}{2}, m+\frac{c}{2}+\frac{r}{8}  \right). 
\end{align*}
If we write $\ch(F)=(r, c, m)$
for $F \in K(\mathbb{P}^2)$
via (\ref{isom:P}), 
then we have
$e^{h/2}\ch(F)=(\widehat{r}, \widehat{c}, \widehat{m})$, 
and $Z_{0, t}(F)$ is written as
\begin{align*}
Z_{0, t}(F)=\widehat{m}-\frac{t^2}{2}\widehat{r}
-t\widehat{c}\sqrt{-1}. 
\end{align*}

\begin{lem}\label{lem:sigmat}
The data
\begin{align*}
\sigma_{t}^{} \cneq (Z_t, \aA_{X/Y}), \quad t>0
\end{align*}
determine a one parameter family of weak stability conditions 
on $\dD_{X/Y}$
with respect to $\Gamma_{\bullet}$. 
\end{lem}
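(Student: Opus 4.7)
The plan is to verify the three defining conditions for a weak stability condition on $\dD_{X/Y}$ with respect to $\Gamma_{\bullet}$: the positivity $Z_t(\aA_{X/Y}\setminus\{0\}) \subset \mathbb{H}$, the Harder-Narasimhan property, and the support property. The bulk of the work lies in positivity, which I would organize according to which graded piece $\Gamma_j/\Gamma_{j-1}$ carries the image of $\cl(E)$, since that determines which $Z_{j,t}$ computes $Z_t(E)$.

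If $\rank(E)>0$ then $\cl(E)\in \Gamma_2\setminus\Gamma_1$ and $Z_t(E)=\rank(E)\exp(\psi\sqrt{-1})\in\mathbb{H}$ by $\psi\in(\pi/2,\pi)$ and Lemma~\ref{lem:positive}. If $\rank(E)=0$ but $\cl(E)\notin\Gamma_0$, I would write $E$ as an iterated extension of objects in $\fF$ and $\tT_{\le 1}[-1]$: the $\fF$-pieces and the $i_\ast T[-1]$-pieces with $T\in\Coh(\mathbb{P}^2)$ are supported on $D$ and project to $0$ in $\Gamma_1/\Gamma_0\cong N_1(Y)$, while the $G[-1]$-pieces with $G\in\Coh_{\le 1}(X)$ contribute $-f_\ast\beta_i$ for effective curve classes $\beta_i$. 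The image of $\cl(E)$ in $N_1(Y)$ is therefore $-\sum_i f_\ast\beta_i$, nonzero by assumption and hence a nonzero effective class, so $Z_{1,t}(E)=(\sum_i f_\ast\beta_i\cdot\omega)\sqrt{-1}$ sits on the positive imaginary axis. The remaining case $\cl(E)\in\Gamma_0$ is the most delicate: identifying $\cl(E)$ with $(r_K,c_K,m_K)\in K(\mathbb{P}^2)$ and computing via~(\ref{iflat}), the second inequality of Lemma~\ref{lem:positive} becomes precisely $\widehat{c}\le 0$ in the hatted coordinates of $e^{h/2}\ch$. When $\widehat{c}<0$ the imaginary part $-t\widehat{c}$ of $Z_{0,t}(E)$ is strictly positive. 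When $\widehat{c}=0$, additivity of $\widehat{c}$ in $\aA_{X/Y}$ together with the fact that every generator of $\aA_{X/Y}\cap\Gamma_0$ has $\widehat{c}\le 0$ forces each constituent in a suitable refinement of $E$ to have $\widehat{c}=0$; inspection of the generators shows these must be either $i_\ast F$ with $F\in\Coh(\mathbb{P}^2)$ slope-semistable of slope exactly $-1/2$, or $G[-1]$ with $G\in\Coh_0(X)$. In the first type, the Bogomolov inequality on $\mathbb{P}^2$ yields $\widehat{m}\le 0$ and $\widehat{r}=\rank(F)>0$; in the second, $\widehat{m}=-\ell(G)\le 0$ and $\widehat{r}=0$. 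Summing gives $\widehat{m}(E)\le 0$, $\widehat{r}(E)\ge 0$, with at least one strict whenever $E\neq 0$, so $Z_{0,t}(E)=\widehat{m}-t^2\widehat{r}/2<0$ lies on the negative real axis in $\mathbb{H}$.

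Once positivity is established, the Harder-Narasimhan and support properties follow by the framework of~\cite[Section~2]{Tcurve1}: noetherianity of $\aA_{X/Y}$, inherited from that of $\Coh(X)$ through the tiltings producing $\bB_{\le 1}$ and Proposition~\ref{prop:t}, combined with the local finiteness of $Z_{j,t}$ on each graded piece, produces HN filtrations, while the support property is furnished by a Bogomolov-type quadratic form on $\Gamma_0$ that is negative semidefinite on $\ker Z_{0,t}$; on $\Gamma_2/\Gamma_1$ and $\Gamma_1/\Gamma_0$ the maps $Z_{2,t}$, $Z_{1,t}$ have rank one image so the support condition is automatic there. I expect the main obstacle to be the borderline subcase $\widehat{c}(E)=0$ inside $\Gamma_0$: isolating exactly the two extremal generator types with $\widehat{c}=0$, and verifying that Bogomolov together with the length bound always forces strict negativity of the real part, is the one place where the specific geometry of $\mathbb{P}^2\subset X$ is essential; the remaining verifications are routine in the style of~\cite{Tcurve1, Tcurve2}.
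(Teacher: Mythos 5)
Your proof is correct and follows essentially the same case division as the paper's, organized by which graded piece of $\Gamma_{\bullet}$ carries $\cl(E)$; the only difference is that for the $\Gamma_0$-piece the paper simply cites Bayer--Macri's construction of Bridgeland stability conditions on local $\mathbb{P}^2$ to get $Z_{0,t}(E)\in\mathbb{H}$, whereas you carry out the underlying tilting-plus-Bogomolov computation (including the borderline $\widehat{c}=0$ subcase) explicitly. Like the paper, you treat the Harder--Narasimhan and support properties as routine, so nothing further is needed.
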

\begin{proof}
We check that (\ref{positive2}) holds. 
For non-zero $E \in \aA_{X/Y}$, 
suppose that $\rank(E) \neq 0$. 
Then $\rank(E)>0$, 
$[E] \in \Gamma_2 \setminus \Gamma_1$
and  
\begin{align*}
Z_t(E)=\rank(E) \cdot \exp(\psi \sqrt{-1}) \in \mathbb{H}.
\end{align*}
If $\rank(E)=0$, then we have 
$E \in \bB_{\le 1}[-1]$. 
If furthermore $E \notin \bB_0[-1]$, then
$[E] \in \Gamma_1 \setminus \Gamma_0$ and 
$-f_{\ast}\ch_2(E)$ is a numerical class of an 
effective one cycle on $Y$. 
Therefore
\begin{align*}
Z_t(E)=-f_{\ast} \ch_2(E) \cdot \omega \sqrt{-1} \in \mathbb{H}. 
\end{align*}
Finally if $0\neq E \in \bB_0[-1]$, then we have 
$Z_t(E)=Z_{0, t}(E)$. 
The construction of Bridgeland stability conditions on
local $\mathbb{P}^2$
in~\cite[Section~4]{BaMa}
shows that 
$(Z_{0, t}, \bB_0[-1])$
determines a Bridgeland stability 
condition on $D^b \Coh_0(X/Y)$. 
Hence we have $Z_{0, t}(E) \in \mathbb{H}$. 
Other properties (Harder-Narasimhan property, 
support property) are checked in a straightforward way. 
For example, the same argument of~\cite[Lemma~3.4]{TodK3} works. 
\end{proof}
We now investigate
the limiting point
$\lim_{t\to +0}\sigma_t$
of the above weak stability conditions. 
The following lemma shows that 
the one parameter family 
in Lemma~\ref{lem:sigmat}
connects the `large volume limit point'
with the `orbifold point', a similar picture 
obtained by Bayer-Macri~\cite{BaMa}
for the space of Bridgeland stability conditions on 
local $\mathbb{P}^2$
 (cf.~Figure~\ref{fig:one} in Subsection~\ref{subsec:idea}). 

\begin{lem}\label{lem:limit}
We have 
\begin{align*}
\sigma_0 \cneq (Z_{0}, \Phi(\aA_{\yY}))\in \Stab_{\Gamma_{\bullet}}(\dD_{X/Y})
\end{align*} 
and it coincides with $\lim_{t\to +0}\sigma_t$. 
\end{lem}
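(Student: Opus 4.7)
The strategy splits into (a) verifying that $\sigma_0 = (Z_0, \Phi(\aA_{\yY}))$ is a weak stability condition on $\dD_{X/Y}$ with respect to $\Gamma_{\bullet}$, and (b) showing that $\sigma_t \to \sigma_0$ as $t \to +0$ in the topology of $\Stab_{\Gamma_{\bullet}}(\dD_{X/Y})$. The key structural input for (a) is Lemma~\ref{lem:AYtor}, which identifies $\Phi(\aA_{\yY})$ as the tilt of $\aA_{X/Y}$ along the torsion pair whose torsion part in $\aA_{X/Y}$ is $\tT^{\dag}[-2]$.

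For the positivity $Z_0(\Phi(\aA_{\yY}) \setminus \{0\}) \subset \mathbb{H}$, I would argue along the filtration $\Gamma_{\bullet}$. An object with positive rank has class in $\Gamma_2 \setminus \Gamma_1$ and yields $Z_0(E) = \rank(E) \cdot \exp(\psi\sqrt{-1}) \in \mathbb{H}$ since $\psi \in (\pi/2, \pi)$. A rank-zero object with class in $\Gamma_1 \setminus \Gamma_0$ is, by Corollary~\ref{cor:pure}, built from $\Phi(F)[-1]$ for pure $F \in \Coh_{\le 1}(\yY)$, and the associated curve class on $Y$ (computed via $g_{\ast}$) is effective, producing strictly positive imaginary part of $Z_0$ against the ample $\omega$. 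The delicate case is the $\Gamma_0$ piece, consisting of objects in $\Phi(\Coh_0(\yY))[-1]$: here I would appeal to Bayer-Macri's description~\cite{BaMa} of $\Stab(\omega_{\mathbb{P}^2})$ and invoke that $(Z_{0,0}, \Phi(\Coh_0(\yY))[-1])$ coincides with (a boundary specialization of) the orbifold point, at which all McKay-simples $T_j[-1]$ have the same phase on the negative real axis. A direct computation via the identification of $\{[S_j]\}_{j=0}^{2}$ with a basis of $K(\mathbb{P}^2) \otimes \mathbb{Q}$ confirms $Z_{0,0}(T_j[-1]) \in \mathbb{R}_{<0}$, and extensions preserve this.

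Granting positivity, the Harder-Narasimhan and support properties split along $\Gamma_{\bullet}$: on the rank and curve pieces they are immediate from finite-dimensionality of the relevant numerical lattices, and on $\Gamma_0$ they follow from the Bayer-Macri construction together with the degenerate-phase property above, glued together by an argument analogous to~\cite[Lemma~3.4]{TodK3}. For (b), the central charges $Z_t$ converge to $Z_0$ component-wise, visibly for $Z_{2,t}$ and $Z_{1,t}$ (independent of $t$) and from the explicit formula $Z_{0,t}(E) = \widehat{m} - t^2 \widehat{r}/2 - t \widehat{c}\sqrt{-1}$ on $\Gamma_0$. The heart change from $\aA_{X/Y}$ to $\Phi(\aA_{\yY})$ realizes the tilt of Lemma~\ref{lem:AYtor}: for $E \in \tT^{\dag}[-2] \subset \aA_{X/Y}$, the phase of $Z_t(E)$ tends to $0^+$ as $t \to +0$, so these objects hit the boundary of $\mathbb{H}$ in the limit; the limit heart replaces them with the shifts $E[1] \in \tT^{\dag}[-1]$, whose phases limit to $1$ and hence lie in $\mathbb{H}$. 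This is the standard boundary behavior at a wall of the stability manifold, and confirms $\lim_{t \to +0}\sigma_t = \sigma_0$.

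The main obstacle is the $\Gamma_0$ piece: carefully establishing that $(Z_{0,0}, \Phi(\Coh_0(\yY))[-1])$ is a genuine weak stability condition on $D^b \Coh_0(X/Y)$, and matching it with the Bayer-Macri orbifold-point picture on local $\mathbb{P}^2$. In particular one must reconcile the $e^{h/2}$-twist in $Z_{0,t}$ with the McKay-type identifications $K(\mathbb{P}^2) \cong \bigoplus_{j=0}^{2} \mathbb{Z}[S_j]$ via $i_{\ast}$ and $\Phi_{\ast}$ from Section~\ref{sec:Derived}. Once this identification is in hand, the remainder of the argument is essentially formal.
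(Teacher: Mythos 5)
Your proposal is correct and follows essentially the same route as the paper: the decisive step in both is the explicit computation that $Z_{0,0}(T_j[-1])$ are negative real numbers (so the $\Gamma_0$-piece degenerates to phase $1$ and the pair $(Z_{0,0},\Phi(\Coh_0(\yY))[-1])$ is a stability condition), combined with the other cases handled exactly as in Lemma~\ref{lem:sigmat}. For the limit statement the paper simply invokes Lemma~\ref{lem:AYtor} together with \cite[Lemma~7.1]{Tcurve1}; the tilting mechanism you describe (objects of $\tT^{\dag}[-2]$ exiting through phase $0^{+}$ and reappearing as their shifts at phase $1$) is precisely the content of that cited lemma.
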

\begin{proof}
Let $T_j$
for $0\le j\le 2$ be the objects given by (\ref{def:T}). 
A direct calculation shows that
\begin{align}\label{comp:T}
&Z_{0, t}(T_0[-1])=-\frac{1}{8}+\frac{t^2}{2}+\frac{t}{2} \sqrt{-1} \\
\notag
&Z_{0, t}(T_1[-1])=-\frac{3}{4}-t^2 \\
\notag
&Z_{0, t}(T_2[-1])=-\frac{1}{8}+\frac{t^2}{2}-\frac{t}{2} \sqrt{-1}.
\end{align}
In particular $\arg Z_{0, 0}(T_j[-1]) =\pi$. 
By Lemma~\ref{lem:ST}, 
this implies that the pair 
$(Z_{0, 0}, \Phi(\Coh_0(\yY))[-1])$
is a Bridgeland stability condition on 
$D^b \Coh_0(X/Y)$. 
This fact together with
 the same argument of Lemma~\ref{lem:sigmat}
show that $\sigma_0$ is an element of 
$\Stab_{\Gamma_{\bullet}}(\dD_{X/Y})$. 
By Lemma~\ref{lem:AYtor}, $\Phi(\aA_{\yY})$ and $\aA_{X/Y}$
differ by a tilting, hence
$\lim_{t\to +0}\sigma_t=\sigma_0$
follows from~\cite[Lemma~7.1]{Tcurve1}. 
\end{proof}

\section{Comparison of stable pair invariants}\label{sec:Comp}
In this section, we relate rank one 
$\sigma_t$-semistable objects for $t\gg 1$ 
with stable pairs on $X$,  
and those for $0<t\ll 1$ with  
orbifold stable pairs on $\yY$. 
We then apply Joyce-Song wall-crossing formula 
to derive a relationship between 
stable pair invariants on $X$
and those on $\yY$. 

\subsection{Moduli stacks of semistable objects}
Let 
$\mM$ be the moduli stack of 
objects $E \in D^b \Coh(X)$ 
satisfying the condition
\begin{align*}
\Ext^{<0}(E, E)=0.
\end{align*}
By the result of Liebich~\cite{LIE}, the 
stack $\mM$ is an Artin stack 
locally of finite type over $\mathbb{C}$. 
For $R\in \mathbb{Z}_{\ge 0}$, 
let  
\begin{align*}
\oO bj^{\le R}(\aA_{X/Y}) \subset \mM
\end{align*}
be the substack of objects $E \in \aA_{X/Y}$
with $\rank(E) \le R$. 
\begin{lem}
The stack $\oO bj^{\le 1}(\aA_{X/Y})$ is an open substack 
of $\mM$. In particular, it is an Artin stack 
locally of finite type over $\mathbb{C}$. 
\end{lem}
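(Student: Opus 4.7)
The plan is to deduce the statement by showing the two defining conditions --- $\rank(E) \le 1$ and $E \in \aA_{X/Y}$ --- each cut out an open substack of $\mM$. The rank condition is immediate: for any perfect family $E_T$ on $X \times T$ with $\Ext^{<0}(E_t,E_t) = 0$ fiberwise, $\rank(E_t) = \ch_0(E_t)$ is a locally constant function of $t$, so $\{\rank \le 1\}$ is clopen in $\mM$.

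For the heart condition, the strategy mirrors~\cite{Tcurve1}: translate membership in $\aA_{X/Y}$ into a finite conjunction of open conditions on the standard cohomology sheaves, then invoke semicontinuity. After passing to a quasi-compact test base with fixed numerical class, Lemma~\ref{lem:filt} writes every rank-one $E_t \in \aA_{X/Y}$ as a three-step extension with graded pieces in $\fF$, $\cC$ (where the twist $\oO_X(rD)$ appearing is pinned down by the determinant and hence constant in $t$), and $\tT_{\le 1}^{\rm pure}[-1]$. Thus $E_t \in \aA_{X/Y}$ becomes equivalent to: (i) $\hH^i(E_t) = 0$ for $i \notin \{-1,0\}$, which is open by upper semicontinuity; (ii) the pure two-dimensional quotient of $\hH^{-1}(E_t)$ is supported on $D$ and $\mu$-semistable with slope $>-1/2$, open by standard openness of slope semistability in flat families of sheaves on $D$; and (iii) a matching slope-threshold condition on the torsion-along-$D$ part of $\hH^0(E_t)$. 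The rank zero case reduces to $E_t \in \bB_{\le 1}[-1]$ and is handled identically. Combined with the openness of $\Ext^{<0}(E,E) = 0$ built into $\mM$, one obtains openness of $\oO bj^{\le 1}(\aA_{X/Y})$ in $\mM$, and hence the assertion that it is an Artin stack locally of finite type over $\mathbb{C}$.

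The main obstacle I anticipate is verifying openness of the combined purity-and-slope-threshold condition on $\hH^{-1}(E_t)$ in a family: purity alone is open, and $\mu$-semistability is open, but their combination with the precise threshold $\mu > -1/2$ defining $\tT_{\le 1}$ requires a boundedness input to preclude extraneous Harder--Narasimhan components emerging in special fibers. Fixing the numerical class on a quasi-compact base restricts the possible Harder--Narasimhan shapes of the two-dimensional pieces on $D$ to a finite set, at which point the argument reduces to a standard openness statement for semistable sheaves on $\mathbb{P}^2$.
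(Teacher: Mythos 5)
Your overall shape is reasonable, but there is a genuine gap at the central step, and it is precisely the step your sketch treats as routine. You reduce membership in $\aA_{X/Y}$ to ``a finite conjunction of open conditions on the standard cohomology sheaves'' and then invoke ``upper semicontinuity'' and ``standard openness of slope semistability in flat families of sheaves on $D$''. The problem is that for a family of objects $E_T$ in $\mM$ parametrized by $T$, the cohomology sheaves $\hH^i(E_t)$ do \emph{not} form flat families over $T$ (they can jump), so neither semicontinuity of $\hH^i$-vanishing in the form you need nor openness of slope semistability applies off the shelf. Showing that the locus where $\hH^0(E_t)$ lies in the torsion-free part and $\hH^1(E_t)$ lies in the torsion part of a tilting pair is open is exactly the nontrivial content here; the paper does not re-prove it but delegates it to the ``stack of torsion theories'' result of \cite[Appendix~A]{AB}, applied to the torsion pair $(\tT_{\le 1}, \fF')$ defining the ambient heart $\aA$ of $D^b\Coh(X)$ in (\ref{def:C}). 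Your closing paragraph identifies the wrong obstacle: the issue is not the threshold $\mu>-1/2$ or boundedness of Harder--Narasimhan types (those are indeed handled by fixing the numerical class), but the non-flatness of the cohomology sheaves of a family of complexes.

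The paper's route is also structurally cleaner than your filtration-based characterization: once $\oO bj(\aA)\subset\mM$ is known to be open, an object $E\in\aA$ with $\rank(E)\le 1$ lies in $\aA_{X/Y}$ if and only if (a) $\hH^0(E)$ is torsion-free on $X\setminus D$ and (b) $\det(E)\cong\oO_X(rD)$ for some $r$. Condition (b) is open since $H^1(X,\oO_X)=0$ makes the determinant locally constant (as you note), and (a) is shown to be open by the spectral sequence argument of \cite[Lemma~3.15, Step 1]{Tcurve1} --- again a device for coping with non-flat cohomology sheaves. If you want to repair your argument without citing \cite{AB}, you would need to reprove openness of the tilted heart directly, e.g.\ by expressing the conditions $\hH^0(E)\in\fF'$ and $\hH^1(E)\in\tT_{\le 1}$ as $\Hom$-vanishing against a bounded family of test objects and using semicontinuity of $\Hom$ for families of complexes; as written, Lemma~\ref{lem:filt} (which is stated only for $E\in\aA_{X/Y}$ of rank exactly one, and for a single object rather than a family) does not supply this.
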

\begin{proof}
The proof is similar to~\cite[Lemma~3.15]{Tcurve1}, 
so we just give a brief explanation. 
Let $\aA$ be the 
abelian category defined by (\ref{def:C}). 
By the argument of~\cite[Appendix~A]{AB}, 
the torsion pair on $\Coh(X)$ which 
defines $\aA$
forms a stack of torsion theories, 
which implies that the stack $\oO bj(\aA)$ 
of objects in $\aA$ is an open substack of $\mM$. 
Therefore it is enough to show that the embedding
\begin{align*}
\oO bj^{\le 1}(\aA_{X/Y})
\subset \oO bj(\aA)
\end{align*}
is an open immersion. 
For $E \in \aA_{X/Y}$
with $\rank(E) \le 1$, we have the exact sequence in $\aA$
\begin{align*}
\hH^0(E) \to E \to \hH^1(E)[-1]
\end{align*}
satisfying the following two conditions: 
\begin{itemize}
\item The sheaf $\hH^0(E)$ is torsion free on $X \setminus D$. 
\item The determinant line bundle $\det(E)$ is of the form 
$\oO_X(rD)$
for some $r \in \mathbb{Z}$. 
\end{itemize}
Conversely if an object $E \in \aA$ 
with $\rank(E) \le 1$
satisfies the 
above two conditions, then we have 
$E \in \aA_{X/Y}$. 
The openness of the former condition follows 
from the same spectral sequence 
argument of~\cite[Lemma~3.15, Step1]{Tcurve1}, 
and the latter condition is obviously open. 
\end{proof}

For 
$t\in \mathbb{R}_{>0}$, $R \in \{0, 1\}$
and $\alpha \in N_{\le 1}(X/Y)$, let
\begin{align}\label{stack:M}
\mM_{t}(R, \alpha) \subset \oO bj^{\le 1}(\aA_{X/Y})
\end{align}
be the substack of 
$Z_{t}$-semistable objects $E \in \aA_{X/Y}$
satisfying 
$\cl(E)=(R, \alpha)$, where $\cl$ is the map (\ref{def:cl}). 
\begin{prop}\label{prop:fund}
Suppose that
\begin{align*}
(R, \alpha) \in (0, N_0(X/Y))
\ \mbox{ or } \ (1, N_{\le 1}(X/Y)).
\end{align*}

(i)
 The stack $\mM_{t}^{}(R, \alpha)$
is an Artin stack of finite type over $\mathbb{C}$, 
such that (\ref{stack:M}) is an open immersion. 

(ii) There 
is a finite number of real numbers
\begin{align*}
0=t_0 < t_1< \cdots < t_{k-1}<t_k=\infty
\end{align*}
such that
for $t \in (t_{i-1}, t_i)$, 
the stack $\mM_{t}(R, \alpha)$ is 
constant 
and consists of 
$Z_t$-stable objects. 
\end{prop}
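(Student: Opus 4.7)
The plan is to follow the strategy developed in the author's earlier paper [Tcurve1, Section~3], adapted to the triangulated category $\dD_{X/Y}$ and the one-parameter family $\sigma_t$ constructed in Section~\ref{sec:weak}. Three things need to be verified in sequence: boundedness of the family of $Z_t$-semistable objects of class $(R,\alpha)$ (giving finite type), openness of the semistable locus in $\oO bj^{\le 1}(\aA_{X/Y})$, and local finiteness of walls in the parameter $t \in (0,\infty)$.

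For boundedness in the rank-one case, I would apply Lemma~\ref{lem:filt} to decompose any $E \in \aA_{X/Y}$ of class $(1,\alpha)$ into $E_1 \in \fF$, $E_2/E_1 \in \cC$, and $E/E_2 \in \tT_{\le 1}^{\rm{pure}}[-1]$. The positivity Lemma~\ref{lem:positive}, applied to the subquotients of this filtration, forces the numerical classes of the pieces to lie in a finite set once $(1,\alpha)$ is fixed: $E_2/E_1$ is of the form $\oO_X(rD)\otimes I_C$ with $r$ bounded and $C$ of bounded length, while $E_1$ and $E/E_2$ are $\mu$-semistable sheaves on $\mathbb{P}^2$ (up to shift) of bounded slope, hence bounded by classical Grothendieck boundedness. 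For the rank-zero case, $E \in \bB_{\le 1}[-1]$ with class in $\Gamma_0$ forces $E \in \bB_0[-1]$, and boundedness reduces to Bayer--Macri's construction of stability on $D^b \Coh_0(X/Y)$ invoked in Lemma~\ref{lem:sigmat}. Openness of the semistable locus is then a standard argument: the set of potential destabilizing subobjects is bounded (same filtration argument applied to sub- and quotient objects), so semistability is an open condition on a bounded family, and the induced map $\mM_t(R,\alpha)\hookrightarrow \oO bj^{\le 1}(\aA_{X/Y})$ is an open immersion of Artin stacks of finite type.

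For part (ii), a wall in $(0,\infty)$ is a value $t_0$ at which some class $\gamma \in \Gamma$, arising as $\cl(F)$ for a subobject $F \subsetneq E$ in $\aA_{X/Y}$ with $\cl(E)=(R,\alpha)$, satisfies the numerical equation $\arg Z_{t_0}(\gamma) = \arg Z_{t_0}((R,\alpha)-\gamma)$. By the boundedness in (i), the possible classes $\gamma$ lie in a finite subset of $\Gamma$. Each such wall equation is a real-analytic condition in $t$, hence has only finitely many solutions in any compact subinterval of $(0,\infty)$. Global finiteness follows by separately analyzing large $t$ (where $Z_{0,t}(v)$ is dominated by its $-\frac{t^2}{2}\widehat{r}$ real part and the $Z_{1,t}$, $Z_{2,t}$ components dictate the phases, precluding walls beyond some $T\gg 0$) and small $t$ (where $\sigma_t \to \sigma_0 = (Z_0, \Phi(\aA_{\yY}))$ by Lemma~\ref{lem:limit}, so walls accumulating at $0$ would contradict the fact that the heart $\Phi(\aA_{\yY})$ is obtained from $\aA_{X/Y}$ by a single tilt along the torsion pair of Lemma~\ref{lem:AYtor}).

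The main obstacle is controlling the integer $r \in \mathbb{Z}$ with $\oO_X(rD)$ appearing in $E_2/E_1$. A priori $r$ is unconstrained; the key input is the $Z_t$-semistability inequality applied against the natural subobjects $\oO_X(r'D)$ and their twists by $\oO_D$ (which lie in $\aA_{X/Y}$ by Lemma~\ref{lem:cC}), which, combined with the fixed numerical class $\alpha$ and the explicit form of $Z_{0,t}$ in \eqref{Z0t}, restricts $r$ to a uniformly bounded range. This parallels the rank-one boundedness argument in [Tcurve1, Section~3] but requires adapting to the richer three-step filtration structure coming from the divisor $D$.
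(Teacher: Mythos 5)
Your proposal follows essentially the same route as the paper's proof: boundedness of the semistable objects via the filtration of Lemma~\ref{lem:filt} combined with the positivity of Lemma~\ref{lem:positive}, the Bogomolov inequality on $\mathbb{P}^2$ and the $Z_t$-semistability of $E$; openness of (\ref{stack:M}) as a consequence of boundedness; and finiteness of walls from the finiteness of numerical classes of potential destabilizers $A$ with $\arg Z_t(A)=\psi$. The paper's own argument is likewise a sketch deferring to \cite{TodK3} and \cite{Tolim}, and your outline matches it in all essentials (the paper bounds the twist $r$ implicitly, by first bounding the classes of the remaining filtration pieces so that $[\oO_X(rD)]$ is forced into a finite set, rather than by testing against subobjects $\oO_X(r'D)$ directly, and it obtains global finiteness of walls by showing the destabilizer classes form a single finite set over all $t>0$ rather than by combining local finiteness with an analysis near $t=0$ and $t=\infty$; both are presentational variants of the same argument).
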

\begin{proof}
The proof follows from the same arguments 
in the author's previous papers~\cite{TodK3}, \cite{Tolim}. 
We don't repeat their details here, 
and just give a brief explanation in the $R=1$ case. 
In both of (i) and (ii), 
we use Lemma~\ref{lem:positive} instead of~\cite[Lemma~2.10]{TodK3}. 

(i) 
Following the proof of~\cite[Lemma~4.13~(ii)]{TodK3},
we can show that the set of objects 
in $\mM_t(1, \alpha)$ is 
bounded. Indeed for
any object $[E] \in \mM_t(1, \alpha)$, 
one can take a
filtration
\begin{align*}
E_1 \subset E_2 \subset E_3=E
\end{align*}
 as in Lemma~\ref{lem:filt}. 
The object $E_2/E_1 \in \cC$ also 
admits a filtration 
\begin{align*}
F_1 \subset F_2 \subset F_3=E_2/E_1
\end{align*}
such that $F_1$ and $F_3/F_2$
are objects in $\Coh_{\le 1}(X)[-1]$, 
and $F_2/F_1=\oO_X(rD)$. 
Using Lemma~\ref{lem:positive}, 
the Bogomolov inequality on $\mathbb{P}^2$
and the  
$Z_t$-semistability of $E$, we can bound the 
numbers of Harder-Narasimhan factors and 
numerical classes of $E_1, E_3/E_2$, $F_1$, $F_3/F_2$. 
This implies the boundedness of objects
in $\mM_t(1, \alpha)$. 
The openness of (\ref{stack:M})
follows from the boundedness of 
semistable objects by the 
same proof of~\cite[Theorem~3.20]{Tolim}.

(ii) We apply the same proof of~\cite[Proposition~9.7]{TodK3}. 
Let $A \in \bB_{\le 1}[-1]$ be an object
satisfying the following conditions: 
\begin{itemize}
\item 
There are $t>0$, $[E] \in \mM_t(1, \alpha)$ and
an injection $A \hookrightarrow E$
or a surjection $E \twoheadrightarrow A$ in $\aA_{X/Y}$. 
\item We have 
$Z_t(A) \in \mathbb{R}_{>0} \exp(\sqrt{-1}\psi)$. 
\end{itemize}
Note that the second condition implies that $A \in \bB_0[-1]$. 
By taking the filtration of $A$ as in Lemma~\ref{lem:filt} 
and using the Bogomolov inequality on $\mathbb{P}^2$, one 
 can show that the possible numerical classes
for such $A$ is a finite
set. In particular, the possible 
$t\in \mathbb{R}$
is also a finite set, giving the desired result. 
\end{proof}
The result of Proposition~\ref{prop:fund} (ii) in particular 
shows that, 
for $t\in \mathbb{R}_{>0} \setminus \{t_1, \cdots, t_{k-1}\}$, 
we have the $\mathbb{C}^{\ast}$-gerby structure
\begin{align}\label{gerby}
\mM_t(1, \alpha) \to M_t(1, \alpha)
\end{align}
for an algebraic space $M_t(R, \alpha)$
of finite type. 
\subsection{DT type invariants}
We define the DT type 
invariants counting 
$Z_t$-semistable objects
$E \in \aA_{X/Y}$
with $\rank(E) \le 1$. 
We first define the rank one invariants. 
Let us take 
\begin{align*}
\alpha \in N_{\le 1}(X/Y), \quad
t\in \mathbb{R}_{>0} \setminus \{t_1, \cdots, t_{k-1}\}
\end{align*}
where $t_i$ is given in 
Proposition~\ref{prop:fund}
for $\mM_t(1, \alpha)$.  
We define
\begin{align*}
\DT_t(1, \alpha) \cneq \int_{M_t(1, \alpha)} \nu \ d\chi. 
\end{align*}
Here $M_t(1, \alpha)$ is the coarse moduli 
space of $\mM_t(1, \alpha)$
given in (\ref{gerby}), 
and $\nu$ is the Behrend function on $M_t(1, \alpha)$. 

In the rank zero case, there may be strictly $\sigma_t$-semistable 
objects even for a general $t$.
So we need to use the Hall algebra
as in Subsection~\ref{subsec:genDT}
to define the invariants. 
Let $H^{0}(\aA_X)$ be the 
stack theoretic Hall algebra 
of rank zero objects in $\aA_X$. 
As a $\mathbb{Q}$-vector space, it is spanned by 
isomorphism classes of symbols
\begin{align*}
[\rho \colon \xX \to \oO bj^0(\aA_X)]
\end{align*}
where $\xX$ is an Artin stack of finite type 
with affine geometric stabilizers
and $\rho$ is a 1-morphism. 
The relation is generated by (\ref{Hall:rel})
after replacing $\cC oh(X)$ by $\oO bj^0(\aA_X)$. 
We also have the associative $\ast$-product 
on $H^0(\aA_X)$, similarly to the $\ast$-product on $H(X)$. 

For $\alpha \in N_{0}(X/Y)$, the stack 
$\mM_t(0, \alpha)$ determines the element
\begin{align*}
\delta_{t}(0, \alpha) \cneq [\mM_t(0, \alpha) \subset \oO bj^{0} (\aA_X)]
 \in H^{0}(\aA_X). 
\end{align*}
We define the element 
$\epsilon_t(0, \alpha) \in H^{0}(\aA_X)$ to be
\begin{align*}
\epsilon_t(0, \alpha) \cneq \sum_{\begin{subarray}{c}
k\ge 1, \alpha_1, \cdots, \alpha_k \in 
N_{0}(X/Y) \\
\arg Z_{0, t}(\alpha_i)= \arg Z_{0, t}(\alpha)
\end{subarray}} \frac{(-1)^{k-1}}{k}
\delta_{t}(0, \alpha_1) \ast \cdots \ast \delta_t(0, \alpha_k). 
\end{align*}
Similarly to (\ref{PiHall}), we have the 
integration map $\Pi$
from the Lie algebra
of elements supported on 
virtual indecomposable objects in $H^0(\aA_X)$
to the Lie algebra
\begin{align*}
C^0(\aA_X)=\bigoplus_{\alpha \in N_0(X/Y)}
\mathbb{Q}\cdot c_{\alpha}
\end{align*}
with bracket given by (\ref{bracket}). 
The element $\epsilon_t(0, \alpha)$ is supported on virtual 
indecomposable objects, and 
the invariant $\DT_t(0, \alpha) \in \mathbb{Q}$
is defined by 
\begin{align*}
\Pi(\epsilon_t(0, \alpha))=-\DT_t(0, \alpha) \cdot c_{\alpha}. 
\end{align*}
It counts $Z_t$-semistable objects
$E \in \aA_X$ with $\cl(E)=(0, \alpha)$.

\subsection{The invariants $\DT_t(R, \alpha)$ for $t\gg 1$ and $0<t \ll 1$}
Let us take $\alpha \in N_{\le 1}(X/Y)$
and its Chern character $\ch(\alpha) \in \mathbb{Q}[D] \oplus 
H^{\ge 4}(X)$. 
Note that we can write $(1, \ch(\alpha)) \in 
H^{\ast}(X, \mathbb{Q})$  as 
\begin{align*}
(1, \ch \alpha)=e^{rD}(1, 0, -\beta, -n)
\end{align*}
for some $r \in \mathbb{Z}$, $\beta \in H^4(X)$ and $n\in H^6(X)$. 
By identifying $H^4(X)$ with $H_2(X)$ and $H^6(X)$ with $\mathbb{Q}$, 
we have the following proposition: 
\begin{prop}\label{prop:DTP}
For $t \gg 0$, we have 
$M_t(1, \alpha) \cong P_n(X, \beta)$. In particular, we have 
$\DT_t(1, \alpha)=P_{n, \beta}(X)$
for $t\gg 0$. 
\end{prop}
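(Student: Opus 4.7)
The plan is to establish a bijection between stable pairs $(\oO_X \stackrel{s}{\to} F)$ on $X$ with $[F] = \beta$ and $\chi(F) = n$ and rank one $Z_t$-semistable objects $E \in \aA_{X/Y}$ with $\cl(E) = (1, \alpha)$, valid for $t \gg 0$. The bijection sends a stable pair $I^\bullet = (\oO_X \to F)$ to its twist $\oO_X(rD) \otimes I^\bullet$, which already lies in $\aA_{X/Y}$ by Lemma~\ref{lem:cC}. The invariants will then agree because the two moduli spaces are identified as open substacks of the stack of simple objects, and the Behrend function is invariant under tensoring by a line bundle.

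For the forward direction, given a stable pair $I^\bullet$, I must show that $E = \oO_X(rD) \otimes I^\bullet$ is $Z_t$-stable for $t \gg 0$. Since $\cl(E) \in \Gamma_2 \setminus \Gamma_1$ with $\rank(E) = 1$, one has $Z_t(E) = \exp(\psi\sqrt{-1})$. For any subobject $0 \neq A \subsetneq E$ in $\aA_{X/Y}$, I split into the cases $\rank(A) = 0$ and $\rank(A) = 1$. When $\rank(A) = 0$, I apply Lemma~\ref{lem:filt} to $A$ and combine Lemma~\ref{lem:positive} with the explicit central-charge formulas (\ref{comp:T}) and (\ref{Z0t}) to check that $\arg Z_t(A)$ stays bounded away from $\psi$ for $t \gg 0$. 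When $\rank(A) = 1$, the quotient $E/A$ has rank zero, and the stable-pair hypotheses (purity of $F$, zero-dimensional cokernel of $s$) together with an analysis of the possible quotients of $E$ in $\aA_{X/Y}$ force $\arg Z_t(E/A) > \psi$ asymptotically.

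The backward direction is the heart of the proof. Let $E \in \aA_{X/Y}$ be $Z_t$-semistable of rank one. Applying Lemma~\ref{lem:filt} yields $E_1 \subset E_2 \subset E$ with $E_1 \in \fF$, $E_2/E_1 \in \cC$, $E/E_2 \in \tT_{\le 1}^{\mathrm{pure}}[-1]$. Using Lemma~\ref{lem:positive} and (\ref{Z0t}), every nonzero object in $\fF$ has $\arg Z_t \to \pi$ as $t \to \infty$, while every nonzero object in $\tT_{\le 1}^{\mathrm{pure}}[-1]$ has $\arg Z_t \to 0$. Since semistability demands $\arg Z_t(E_1) \le \psi \le \arg Z_t(E/E_2)$ and $\psi \in (\pi/2, \pi)$, for $t \gg 0$ this forces $E_1 = 0$ and $E/E_2 = 0$, so $E \in \cC$. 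Tracing through the proof of Lemma~\ref{lem:filt} then presents $E$ as an extension $0 \to \oO_X(rD) \otimes I_C \to E \to T'[-1] \to 0$ with $\dim C \le 1$ and $T' \in \Coh_{\le 1}(X)$. Rotating this triangle gives a two-term complex representation $E \simeq \oO_X(rD) \otimes (\oO_X \to F)$ for some one-dimensional sheaf $F$; the purity of $F$ and zero-dimensionality of $\Cok(\oO_X \to F)$ follow by applying $Z_t$-semistability to candidate zero-dimensional subobjects and improper torsion, which would otherwise produce subobjects of $E$ violating the phase inequality.

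The main obstacle is the last part of the backward direction: repackaging an abstract extension in $\cC$ as a genuine stable pair and ruling out all the spurious zero-dimensional data. Once this is done, the moduli identification is formal: tensoring by $\oO_X(-rD)$ is an autoequivalence that matches the open immersions of $M_t(1, \alpha)$ and $P_n(X, \beta)$ into the moduli space of simple objects in $D^b\Coh(X)$, and the Behrend constructible function, being intrinsic to the $(-1)$-shifted symplectic/perfect-obstruction deformation theory, is preserved. Hence $\DT_t(1, \alpha) = P_{n, \beta}(X)$. This step parallels arguments in the author's earlier wall-crossing work in~\cite{Tolim2}, \cite{Tcurve1}, \cite{TodK3}, from which the technical details can be adapted.
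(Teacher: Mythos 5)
Your proposal is correct and follows essentially the same route as the paper: both directions of the bijection are established via Lemma~\ref{lem:filt} and Lemma~\ref{lem:cC}, with the asymptotics of $Z_{0,t}$ forcing $E_1=E/E_2=0$ so that $E\in\cC$, and the converse stability check reducing to rank-zero sub/quotient objects whose phases tend to $0$ or $\pi/2$. The only difference is cosmetic: where you gesture at ``repackaging the extension as a stable pair and ruling out spurious zero-dimensional data,'' the paper invokes \cite[Lemma~3.11~(iii)]{Tcurve1} after observing $\Hom(\Coh_0(X)[-1],E)=0$, and it rules out the $\fF$-part of a rank-zero subobject by noting a pure two-dimensional sheaf admits no nonzero map to an object of $\cC$.
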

\begin{proof}
We first construct a morphism
\begin{align}\label{MP}
M_t(1, \alpha) \to P_n(X, \beta)
\end{align}
for $t \gg 0$. 
Let us take an object $[E] \in M_t(1, \alpha)$
and a filtration in $\aA_{X/Y}$
\begin{align*}
E_1 \subset E_2 \subset E_3 =E
\end{align*}
as in Lemma~\ref{lem:filt}.
If $E_1 \neq 0$, 
it contradicts to the $Z_t$-stability of $E$
for $t\gg 0$ as 
\begin{align*}
\lim_{t \to \infty} \arg Z_{t}(E_1)=\pi
>\psi=\arg Z_t(E). 
\end{align*}
Therefore $E_1=0$, and the same argument 
also shows $E_3/E_2=0$. 
It follows that $E \in \cC$. 
We take the distinguished triangle
\begin{align*}
\hH^0(E) \to E \to \hH^1(E)[-1]
\end{align*}
which is an exact sequence in $\cC$. 
Note that $\hH^0(E)$ is of the form 
$\oO_X(rD) \otimes I_C$ for a
subscheme $C \subset X$ with $\dim C \le 1$
and $\hH^1(E) \in \Coh_{\le 1}(X)$. 
By the $Z_t$-stability of $E$ for $t \gg 0$, 
the sheaf $\hH^1(E)$ must be zero dimensional, 
and the following holds
\begin{align*}
\Hom(\Coh_0(X)[-1], E)=0.
\end{align*}
Therefore applying~\cite[Lemma~3.11~(iii)]{Tcurve1}, 
we see that $E$ is isomorphic to 
an object of the form
\begin{align}\label{rOF}
\oO_X(rD) \otimes (\oO_X \to F)
\end{align}
for some stable pair $(\oO_X \to F) \in P_n(X, \beta)$. 
The morphism (\ref{MP}) 
is defined by sending $E$ to $(\oO_X \to F)$. 

Conversely, we construct a morphism
\begin{align}\label{PM}
P_n(X, \beta) \to M_t(1, \alpha)
\end{align}
for $t\gg 0$. For a stable 
pair $(\oO_X \to F) \in P_n(X, \beta)$, 
the complex (\ref{rOF}) is 
an object in
$\aA_{X/Y}$ by Lemma~\ref{lem:cC}. 
We show that the object (\ref{rOF}) is $Z_t$-stable 
for $t\gg 0$. 
Let us take an exact sequence in $\aA_{X/Y}$
\begin{align*}
0 \to A \to \oO_X(rD) \otimes (\oO_X \to F)
\to B \to 0
\end{align*}
with $A, B \neq 0$. 
Then $A$ or $B$ is an object in $\bB_{\le 1}[-1]$. 
Suppose that 
$A \in \bB_{\le 1}[-1]$. We show that 
$\arg Z_t(A) <\psi$ holds for $t\gg 0$. 
If $A \notin \bB_{0}[-1]$, 
then $\arg Z_t(A)=\pi/2$, and 
the claim is obvious. 
We may assume that 
$A \in \bB_{0}[-1]$. 
If $\hH^0(A) \neq 0$, then 
it is a pure two dimensional sheaf, 
which implies $\Hom(\hH^0(A), \cC)=0$ 
by the definition of $\cC$. 
This is a contradiction as 
$\hH^0(A)$ is a subobject of (\ref{rOF}) in $\aA_{X/Y}$. 
Hence $\hH^0(A)=0$ and $A \in \tT_0[-1]$ holds. 
Then we have
\begin{align*}
\lim_{t\to \infty} \arg Z_t(A)=0<\psi. 
\end{align*} 
A similar argument shows 
that $\arg Z_t(B) >\psi$ for $t\gg 0$ if 
$B \in \bB_{\le 1}[-1]$, 
and we conclude that the object (\ref{rOF}) is 
$Z_t$-stable for $t\gg 0$. 
The morphism (\ref{rOF}) is defined
by sending $(\oO_X \to F)$ to the object
(\ref{rOF}). 
The morphisms (\ref{MP}), (\ref{PM})
are inverse each other, hence they are isomorphisms. 
\end{proof}
\begin{prop}\label{prop:PY}
For $0<t\ll 1$, we have
$M_t(1, \alpha) \cong P(\yY, -\Phi_{\ast}^{-1}(\alpha))$. 
In particular, we have 
$\DT_t(1, \alpha)=P_{-\Phi_{\ast}^{-1}(\alpha)}(\yY)$. 
\end{prop}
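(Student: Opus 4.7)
The argument is the $t \to 0^+$ analog of the proof of Proposition~\ref{prop:DTP}, with the heart $\aA_{X/Y}$ replaced, in the limit, by the orbifold heart $\Phi(\aA_{\yY})$. The two key inputs are Lemma~\ref{lem:limit}, which identifies $\lim_{t \to 0^+} \sigma_t = \sigma_0 = (Z_0, \Phi(\aA_{\yY}))$, and Lemma~\ref{lem:AYtor}, which realizes $\aA_{X/Y}$ as a tilt of $\Phi(\aA_{\yY})$ along the torsion pair $(\tT^{\dag}[-1], \fF^{\sharp})$; this tilt will play the role that Lemma~\ref{lem:filt} plays in Proposition~\ref{prop:DTP}.

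The plan is first to construct a morphism $M_t(1,\alpha) \to P(\yY, -\Phi_{\ast}^{-1}(\alpha))$ for $0 < t \ll 1$. Given $[E] \in M_t(1,\alpha)$, Lemma~\ref{lem:AYtor} produces an exact sequence in $\aA_{X/Y}$ of the form
\[
0 \to F^{\sharp} \to E \to T^{\dag}[-2] \to 0
\]
with $F^{\sharp} \in \fF^{\sharp}$ and $T^{\dag} \in \tT^{\dag}$. Since $\tT^{\dag} \subset \Phi(\Coh_0(\yY))$, the class $[T^{\dag}]$ lies in $\Gamma_0$ and $Z_t(T^{\dag}[-2]) = Z_{0,t}(T^{\dag})$. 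The formula (\ref{comp:T}) (together with $T^{\dag}[-1] \in \bB_0$ having $\arg Z_{0,0} = \pi$) shows that $\arg Z_{0,t}(T^{\dag}[-2]) \to 0^+$ as $t \to 0^+$, while $\arg Z_t(F^{\sharp}) = \psi \in (\pi/2,\pi)$ since $F^{\sharp}$ has rank one. A nonzero $T^{\dag}$ would therefore yield $\arg Z_t(F^{\sharp}) > \arg Z_t(T^{\dag}[-2])$ for all small $t$, contradicting the $Z_t$-stability of $E$. Hence $T^{\dag} = 0$ and $E = F^{\sharp} \in \fF^{\sharp} \subset \Phi(\aA_{\yY})$. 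Setting $I \cneq \Phi^{-1}(E) \in \aA_{\yY}$ with $\rank(I) = 1$, the generation $\aA_{\yY} = \langle \oO_{\yY}, \Coh_{\le 1}(\yY)[-1]\rangle_{\rm{ex}}$ combined with the $Z_t$-stability of $E$ (transported through $\Phi$ and the torsion pair) will allow me to identify $I$ with a two-term complex $(\oO_{\yY} \to F)$ in which $F \in \Coh_{\le 1}(\yY)$ is pure and the cokernel of the map is zero-dimensional. The numerical constraint $[F] = -\Phi_{\ast}^{-1}(\alpha)$ is then read off from $\cl(E) = (1,\alpha)$.

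For the inverse direction, given $(\oO_{\yY} \to F) \in P(\yY, -\Phi_{\ast}^{-1}(\alpha))$, the object $\Phi(\oO_{\yY} \to F)$ already belongs to $\fF^{\sharp} \subset \aA_{X/Y}$. For any subobject $0 \neq A \subsetneq \Phi(\oO_{\yY} \to F)$ in $\aA_{X/Y}$, I decompose $A$ via the torsion pair $(\tT^{\dag}[-1], \fF^{\sharp})$ and use the purity of $F$ together with the surjectivity of $\oO_{\yY} \to F$ in dimension one to bound $\arg Z_t(A) < \psi$ for $0 < t \ll 1$, which yields $Z_t$-stability. The two morphisms are mutually inverse, producing the claimed isomorphism $M_t(1,\alpha) \cong P(\yY, -\Phi_{\ast}^{-1}(\alpha))$; the Behrend-weighted equality $\DT_t(1,\alpha) = P_{-\Phi_{\ast}^{-1}(\alpha)}(\yY)$ then follows from compatibility of Behrend functions under the derived equivalence $\Phi$, using the $\mathbb{C}^{\ast}$-gerbe structure (\ref{gerby}).

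I expect the most delicate point to be the rigidity step: showing that the rank-one object $I \in \aA_{\yY}$, after decomposition via $\aA_{\yY} = \langle \oO_{\yY}, \Coh_{\le 1}(\yY)[-1]\rangle_{\rm{ex}}$, is of the form $(\oO_{\yY} \to F)$ with \emph{no} twist by a stacky line bundle on $\yY$. In Proposition~\ref{prop:DTP} the analogous $\oO_X(rD)$-twist is recovered from the Chern character of $\alpha$ via the decomposition $(1,\ch\alpha) = e^{rD}(1,0,-\beta,-n)$; here the analog would be a twist by a line bundle supported at the orbifold point, and ruling it out requires matching the $G$-representation data across $\Phi$ using the identification $i_{\ast}\colon K(\mathbb{P}^2) \stackrel{\sim}{\to} \Gamma_0$ in (\ref{isom:quot}) together with Lemma~\ref{lem:ST}.
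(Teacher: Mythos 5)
Your overall strategy is the same as the paper's: reduce to the orbifold heart, classify the rank one stable objects there as images of orbifold stable pairs, and check the converse by a direct stability computation. Your first step is carried out correctly and is a slightly more explicit variant of what the paper does: where the paper invokes Lemma~\ref{lem:limit} (i.e.\ that $\sigma_0=\lim_{t\to+0}\sigma_t$, so rank one $Z_t$-stability for $0<t\ll 1$ is equivalent to $Z_0$-stability in $\Phi(\aA_{\yY})$, semistable objects being automatically stable since no rank zero object of $\Phi(\aA_{\yY})$ has $\arg Z_0=\psi$), you argue directly with the torsion pair of Lemma~\ref{lem:AYtor} and the limit $\arg Z_{0,t}(T^{\dag}[-2])\to 0^{+}$; both routes are fine and essentially equivalent.

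However, the step you defer with ``will allow me to identify $I$ with a two-term complex'' is precisely where the content of the proposition lies, and stability alone does not deliver it. Writing $G=\Phi^{-1}(E)$, the extension closure $\aA_{\yY}=\langle \oO_{\yY},\Coh_{\le 1}(\yY)[-1]\rangle_{\rm{ex}}$ gives a filtration $G_1\subset G_2\subset G_3=G$ with $G_1, G_3/G_2\in \Coh_{\le 1}(\yY)[-1]$ and $G_2/G_1=\oO_{\yY}$. Stability forces the quotient $G_3/G_2$ to lie in $\Coh_0(\yY)[-1]$ (otherwise its $Z_0$-argument is $\pi/2<\psi$), but it does \emph{not} force $G_3/G_2=0$: a quotient $Q[-1]$ with $Q\in\Coh_0(\yY)$ has $\arg Z_0(Q[-1])=\pi>\psi$, which is perfectly compatible with stability. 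The missing ingredient is the vanishing
\begin{align*}
\Ext^1_{D^b\Coh(\yY)}(Q[-1],\oO_{\yY})\cong \Ext^2_{\yY}(Q,\oO_{\yY})\cong H^1(\yY,Q)^{\vee}=0
\end{align*}
for zero-dimensional $Q$ (Serre duality on the CY3 orbifold), which splits the sequence $0\to\oO_{\yY}\to G_3/G_1\to G_3/G_2\to 0$ and lets one rearrange the filtration so that $G_3/G_2=0$; only then is $G$ a genuine two-term complex $(\oO_{\yY}\to F)$, after which purity of $F$ and zero-dimensionality of $\Cok(s)$ do follow from stability as you indicate. By contrast, the point you single out as most delicate is a non-issue: unlike the category $\cC$ in Proposition~\ref{prop:DTP}, which contains every $\oO_X(rD)$, the heart $\aA_{\yY}$ is generated by the single rank one object $\oO_{\yY}$, so rank additivity forces the unique rank one graded piece of the filtration to be $\oO_{\yY}$ itself and no twist can occur. (Also, the final equality of invariants needs no compatibility of Behrend functions under $\Phi$: once $M_t(1,\alpha)\cong P(\yY,-\Phi_{\ast}^{-1}(\alpha))$ as algebraic spaces, the weighted Euler characteristics agree because the Behrend function is intrinsic.)
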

\begin{proof}
We first note that 
an object $E \in \aA_{X/Y}$ with 
$\rank(E)=1$ is $Z_t$-stable 
for $0<t\ll 1$ if and only if 
$E \in \Phi(\aA_{\yY})$ and it is 
$Z_0$-stable. 
This statement is an 
immediate consequence of Lemma~\ref{lem:limit}, 
together with
the fact that any rank 
one $Z_0$-semistable 
object in $\Phi(\aA_{\yY})$
is $Z_0$-stable. 
The latter fact holds since
there is no rank zero object 
$F \in \Phi(\aA_{\yY})$
with $\arg Z_0(F)=\psi$. 
Therefore 
it is enough to show that
a rank one object $E \in \Phi(\aA_{\yY})$
is $Z_0$-stable if and only if it 
is isomorphic to 
an object of the form
\begin{align}\label{PhiOF}
\Phi(\oO_{\yY} \to F)
\end{align}
for an orbifold stable pair $(\oO_{\yY} \to F)$
on $\yY$. 

Let us take a $Z_0$-stable object $E=\Phi(G)
 \in \Phi(\aA_{\yY})$
with $\rank(E)=1$. 
By the definition of $\aA_{\yY}$, there is a filtration 
in $\aA_{\yY}$
\begin{align}\label{F123}
G_1 \subset G_2 \subset G_3=G
\end{align}
satisfying the following: 
\begin{align*}
G_1 \in \Coh_{\le 1}(\yY)[-1], \
G_2/G_1=\oO_{\yY}, \ 
G_3/G_2 \in \Coh_{\le 1}(\yY)[-1].
\end{align*}
By the $Z_0$-stability of $E$, we have 
$G_3/G_2 \in \Coh_{0}(\yY)[-1]$, 
hence the sequence
\begin{align*}
0 \to \oO_{\yY} \to G_3/G_1 \to G_3/G_2 \to 0
\end{align*}
splits. This implies that we can replace the filtration (\ref{F123})
so that $G_3/G_2=0$ holds. Hence 
$G$ is isomorphic to a two term complex of the form
$(\oO_{\yY} \stackrel{s}{\to} F)$ with $F \in \Coh_{\le 1}(\yY)$. 
By the $Z_0$-stability of $E$, 
we have 
\begin{align*}
\Hom(\Coh_{0}(\yY)[-1], E)=0
\end{align*} 
hence $F$ must be a pure one dimensional 
sheaf on $\yY$. 
The $Z_0$-stability of $E$ 
also implies that 
the cokernel of $s$ is zero dimensional. 
It follows that $(\oO_{\yY} \stackrel{s}{\to} F)$
is an orbifold stable pair, and $E$ is isomorphic 
to an object of the form (\ref{PhiOF}). 

Conversely, let us take an object (\ref{PhiOF}). 
We take an exact sequence in $\Phi(\aA_{\yY})$
\begin{align*}
0 \to A \to \Phi(\oO_{\yY} \to F) \to B \to 0
\end{align*}
with $A, B \neq 0$. 
Note that either $A$ or 
$B$ is an object of
$\Phi(\Coh_{\le 1}(\yY))[-1]$.  
Suppose that $A$ is an object of 
$\Phi(\Coh_{\le 1}(\yY))[-1]$. 
Since $(\oO_{\yY} \to F)$ is an orbifold 
stable pair,
there is no non-zero morphism from 
an object in $\Phi(\Coh_0(\yY))[-1]$ to 
the object (\ref{PhiOF}). 
Therefore $A \notin \Phi(\Coh_0(\yY))[-1]$, 
which implies that
\begin{align*}
\arg Z_0(A)=\frac{\pi}{2}<\psi. 
\end{align*}
Suppose that $B$
is an object of 
$\Phi(\Coh_{\le 1}(\yY))[-1]$.
Then 
we have $B \in \Phi(\Coh_0(\yY))[-1]$
as $(\oO_{\yY} \to F)$ is an orbifold stable 
pair. 
Hence $\arg Z_0(B)>\psi$, 
and the object (\ref{PhiOF})
is $Z_0$-stable. 
\end{proof}

Finally in this subsection, we investigate
the rank zero DT type invariants: 
\begin{prop}\label{prop:DTrcm}
Suppose that $\alpha \in N_0(X/Y)$
is written as 
$\alpha=-i_{\ast} \alpha_0$
for $\alpha_0 \in K(\mathbb{P}^2)$
with $\ch(\alpha_0)=(r, c, m)$. 
If $\widehat{c}=c+r/2>0$, we have the equality for 
$t \gg 0$
\begin{align}\label{id:DT}
\DT_t(0, \alpha)=\DT(r, c, m). 
\end{align}
\end{prop}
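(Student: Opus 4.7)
The plan is to identify, for $t \gg 0$, the moduli stack $\mM_t(0, -i_\ast \alpha_0)$ together with its Hall-algebra data with the moduli stack of Gieseker-semistable compactly supported sheaves $F$ on $U = \omega_{\mathbb{P}^2}$ satisfying $\ch(\pi_\ast F) = (r, c, m)$, and then to read off the equality of the corresponding integrated invariants. As a first step, by the construction $\aA_{X/Y} = \langle \oO_X, \bB_{\le 1}[-1]\rangle_{\rm{ex}}$ via Proposition~\ref{prop:t} together with the vanishings $\Hom(\oO_X, \bB_{\le 1}[-1]) = 0 = \Hom(\bB_{\le 1}[-1], \oO_X)$ (either verified on generators in the proof of Lemma~\ref{lem:t} or immediate from torsion/torsion-free considerations), any rank-zero $E \in \aA_{X/Y}$ automatically lies in $\bB_{\le 1}[-1]$, and so do all of its subobjects and quotients in $\aA_{X/Y}$. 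Consequently $Z_t$-semistability may be tested internally to $\bB_{\le 1}[-1]$.

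\textbf{Forcing $E[1]$ supported on $D$ and matching with Bayer--Macri.}
Any subobject $F \hookrightarrow E$ in $\bB_{\le 1}[-1]$ with $[F] \in \Gamma_1 \setminus \Gamma_0$ also satisfies $[E/F] \in \Gamma_1 \setminus \Gamma_0$ and $\arg Z_t(F) = \arg Z_t(E/F) = \pi/2$, by the effectivity argument of Lemma~\ref{lem:sigmat}. I would then compute $\arg Z_t(E) = \arg(-\widehat{m} + t^2 r/2 + t\widehat{c}\sqrt{-1})$ using the hypothesis $\widehat{c} > 0$ and show, case by case ($r > 0$ gives $\arg \to 0^+$; $r = 0$ with $\widehat{m} < 0$ gives $\arg < \pi/2$; $r = 0$ with $\widehat{m} > 0$ gives $\arg > \pi/2$), that at least one of $F$ or $E/F$ strictly destabilizes $E$ for $t \gg 0$, forcing the numerical reduction $E \in \bB_0[-1]$. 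Combined with the observation that an $\oO_x[-1]$ subobject with $x \in X \setminus D$ has $\arg Z_t = \pi > \arg Z_t(E)$ whenever $\widehat{c} > 0$, this forces $E[1]$ to be a complex supported on $D$. Under the formal-neighborhood identification $D \subset X \cong \mathbb{P}^2 \subset U$, the restriction of $(Z_{0, t}, \bB_0[-1])$ to complexes supported on $D$ is precisely the Bayer--Macri stability of \cite[Section~4]{BaMa} restricted to complexes in $D^b \Coh_{\rm{c}}(U)$ supported on the zero section; for $t \gg 0$ one lies in the large-volume chamber, and its semistable objects of the given Chern character are Gieseker-semistable sheaves, which by \cite[Lemma~2.3]{TodS3} are of the form $i_\ast G$ for Gieseker-semistable $G \in \Coh(\mathbb{P}^2)$ with $\ch(G) = (r, c, m)$.

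\textbf{Hall-algebra comparison and main obstacle.}
The bijection of moduli stacks is compatible with the stack-theoretic Hall algebras, since the formal-neighborhood identification is an equivalence of deformation theories for objects supported on $D$ (resp.\ on the zero section) and the Euler pairings on $N_0(X/Y) \cong K(\mathbb{P}^2)$ on both sides agree (both descend from the $\chi$-pairing on sheaves supported on $\mathbb{P}^2$, using the Calabi-Yau property). Therefore $\epsilon_t(0, -i_\ast \alpha_0)$ maps under the integration map $\Pi$ of (\ref{PiHall}) to the Hall-algebra element defining $\DT(r, c, m)$, which yields the identity (\ref{id:DT}). I expect the principal technical difficulty to be the boundary case $r = 0, c > 0, \widehat{m} = 0$, where $\arg Z_t(E) = \pi/2$ coincides with the argument of the $\Gamma_1 \setminus \Gamma_0$-destabilizers, so the reduction to $\bB_0[-1]$ must be performed at the level of the Hall-algebra element $\epsilon_t$ rather than merely on the moduli of stable objects; a secondary technical point is making the formal-neighborhood identification precise at the stack-theoretic Hall-algebra level, proceeding along the wall-crossing templates of \cite{TodK3} and \cite{TodS3}.
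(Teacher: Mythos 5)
There is a genuine gap: your argument only covers the classes with $r>0$ or $r=0,\ c>0$, while the hypothesis $\widehat{c}=c+r/2>0$ also allows $r<0$ (e.g.\ $(r,c,m)=(-1,1,1/2)$, which actually occurs in the wall-crossing sums of Section~\ref{sec:Comp}). For $r<0$ no sheaf $G\in\Coh(\mathbb{P}^2)$ has $\ch(G)=(r,c,m)$, so your final identification ``the semistable objects of the given Chern character are Gieseker-semistable sheaves $i_\ast G$ with $\ch(G)=(r,c,m)$'' is vacuously false in that range. What actually happens there is that the $Z_{0,t}$-semistable objects $E\in\bB_0[-1]$ of class $\alpha$ are genuine two-term complexes (their degree-zero cohomology lies in $\fF$), and the large-volume-limit characterization goes through the derived dual: the paper shows that $E$ is $Z_{0,t}$-semistable for $t\gg 0$ iff $\mathbb{D}(E)[1]$ is Gieseker semistable, where $\mathbb{D}(-)=\dR\hH om(-,\oO_X)$, the dual having class $-e^{-3h}(r,-c,m)$ (cf.\ the argument of \cite[Proposition~9.5]{TodK3}). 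One then needs the symmetries (\ref{eDT}) (tensoring by $\oO_U(\pm 1)$) and (\ref{DT:dual}) (Lemma~\ref{lem:DT:dual}) to convert $\DT(e^{-3h}(r,-c,m))$ back into $\DT(r,c,m)$; note that Lemma~\ref{lem:DT:dual} is proved by a separate Hall-algebra argument and is not a formal consequence of your setup. Without this branch the proposition, as used in Theorem~\ref{thm:main}, is unproved for exactly the contributions with $r_j<0$.

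Two smaller remarks. First, the reduction to $\bB_0[-1]$ in your second paragraph is more involved than necessary: since $\alpha\in N_0(X/Y)$, any $E\in\bB_{\le 1}[-1]$ with $[E]=\alpha$ already lies in $\bB_0[-1]$ for support reasons ($f_\ast\ch_2(E)=0$ forces the one-dimensional cohomology to be contracted by $f$), so no destabilization case analysis is needed. Second, the ``boundary case'' $r=0,\ c>0,\ \widehat{m}=0$ that you flag as the principal difficulty is not an issue — for $r=0,\ c>0$ the identification with Gieseker semistable one-dimensional sheaves at large volume holds uniformly, and the paper treats it together with $r>0$; the real difficulty is the $r<0$ case described above.
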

\begin{proof}
First suppose that $r>0$ or $r=0, c>0$. 
Then a well-known 
argument shows that 
an object $E \in \bB_0[-1]$ with 
numerical class $\alpha$
is $Z_{0, t}$-semistable 
for $t\gg 0$ if and only if 
$E[1]$ is a Gieseker semistable sheaf.  
For example, the proof of this fact for 
K3 surfaces in~\cite[Proposition~6.4, Lemma~6.5]{Tst3} works 
without 
any major modification.
Also similarly to~\cite[Proposition~6.4, Lemma~6.5]{Tst3}, 
this fact also holds for every $Z_{0, t}$-stable factors of $E$. 
By the definition of $\bB_{0}$, the sheaf $E[1]$ is 
supported on $D$. Since the formal neighborhood of $D \subset X$
is isomorphic to the formal neighborhood of the zero 
section of $\omega_{\mathbb{P}^2} \to \mathbb{P}^2$, we obtain the 
identity (\ref{id:DT}). 

Next suppose that $r<0$. 
Then an 
object $E \in \bB_0[-1]$ with 
numerical class $\alpha$
is $Z_{0, t}$-semistable for $t\gg 0$
if and only if 
$\mathbb{D}(E)[1]$
is a Gieseker semistable sheaf on $\mathbb{P}^2$, 
where $\mathbb{D}(-)=\dR \hH om(-, \oO_X)$
is the dualizing functor. 
This fact also follows from 
a well-known argument: 
the object $\mathbb{D}(E)$ is numerical class
$-i_{\ast}\alpha_1$
for $\ch(\alpha_1)=-e^{-3h}(r, -c, m)$, 
and the argument of~\cite[Proposition~9.5]{TodK3}
shows that $\mathbb{D}(E)$ is also 
Bridgeland semistable near the large volume 
limit in a tilted haert. 
Hence the above argument for $r>0$ case shows
that $\mathbb{D}(E)[1]$ is 
Gieseker semistable. 
It is easy to see that the
same 
 fact 
also holds for every $Z_{0, t}$-stable factors of $E$.
Therefore we have the identity
\begin{align*}
\DT_t(0, \alpha)=\DT(e^{-3h}(r, -c, m))
\end{align*}
for $t\gg 0$. 
Then the desired identity 
follows from (\ref{eDT}) and (\ref{DT:dual}). 
\end{proof}

\subsection{Combinatorial coefficients}
In this subsection, we recall Joyce's 
combinatorial coefficients which appear
in our wall-crossing formula, and 
give their explicit description. 
For $d\in \{0, 1\}$, 
we define the following 
positive cone:
\begin{align*}
N_{\le d}^{+}(X/Y) \cneq \Imm \left(
\bB_{\le d} 
\to N_{\le d}(X/Y) \right)\setminus \{0\}. 
\end{align*}
We also define
\begin{align*}
&K^{+}(\mathbb{P}^2) \cneq 
\left\{ E \in K(\mathbb{P}^2) : 
i_{\ast}E \in N_0^{+}(X/Y) \right\} \\
&\Gamma^{+} \cneq \mathbb{Z}_{\ge 0} \oplus 
\{-N_{\le 1}^{+}(X/Y) \cup \{0\} \} \setminus \{(0, 0)\}.
\end{align*}
The cone $\Gamma^{+}$ coincides with the image of 
non-zero objects in $\aA_{X/Y}$
under the map (\ref{def:cl}).
Hence for $v\in \Gamma^{+}$, the argument 
$\arg Z_{t}(v) \in (0, \pi]$ is well-defined. 
For $v, v' \in \Gamma^{+}$, we write 
\begin{align*}
Z_{\infty}(v)\succ Z_{\infty}(v') \
(\mbox{resp. } Z_{+0}(v)\succ Z_{+0}(v'))
\end{align*}
if $\arg Z_{t}(v)> \arg Z_{t}(v')$ holds 
for $t\gg 0$
(resp.~$0< t \ll 1$). 
\begin{defi}\emph{(\cite[Definition~4.2]{Joy4})}
For 
$v_1, \cdots, v_k \in \Gamma^{+}$, 
suppose that either 
(\ref{S1}) or (\ref{S2})
holds for each $i=1, \cdots, k-1$
\begin{align}
\label{S1}
&Z_{\infty}(v_i) \preceq Z_{\infty}(v_{i+1}) \mbox{ and }
Z_{+0}(v_1+ \cdots + v_i) \succ
Z_{+0}(v_{i+1}+ \cdots + v_{k}) \\
\label{S2}
&Z_{\infty}(v_i) \succ Z_{\infty}(v_{i+1}) \mbox{ and }
Z_{+0}(v_1+ \cdots + v_i) \preceq
Z_{+0}(v_{i+1}+ \cdots + v_{k}).
\end{align}
Then define 
\begin{align}\label{Sa}
S(\{v_1, \cdots, v_k \}, Z_{\infty}, Z_{+0}) \cneq (-1)^{a}
\end{align}
where $a$ is the number of 
$i=1, \cdots, k-1$ satisfying (\ref{S1}). 
If neither (\ref{S1}) nor $(\ref{S2})$ holds for some $i$, 
we define $S(\{v_1, \cdots, v_k \}, Z_{\infty}, Z_{+0})=0$.
\end{defi}
Let $\iota(x)$ be the function
on $\mathbb{R}$ defined by
\begin{align}\label{iota}
\iota(x) \cneq \left\{ \begin{array}{cl}
1 & x>0 \\
-1 & x\le 0. 
\end{array}   \right. 
\end{align}
We can also write (\ref{Sa}) 
in the following way: 
\begin{align}\notag
\frac{1}{2^{k-1}}
\lim_{\varepsilon \to +0}
\prod_{i=1}^{k-1}
&\left\{ \iota\left( \arg Z_{1/\varepsilon}(v_i) -
\arg Z_{1/\varepsilon}(v_{i+1})
  \right) \right. \\ \label{Sa2}
&\left. -\iota\left( \arg Z_{\varepsilon}(v_1+ \cdots +v_i) -
\arg Z_{\varepsilon}(v_{i+1}+\cdots +v_k)
  \right) \right\}. 
\end{align}

\begin{defi}\emph{(\cite[Definition~4.4]{Joy4})}
For $v_1, \cdots, v_k \in \Gamma^{+}$, 
we define 
\begin{align}\notag
U(\{v_1, \cdots, v_k\}, Z_{\infty}, Z_{+0})
\cneq \sum_{1\le k'' \le k' \le k}
\sum_{\begin{subarray}{c}
\psi \colon \{1, \cdots, k\} \to \{1, \cdots, k'\} \\
\label{defi:U}
\psi' \colon \{1, \cdots, k'\} \to \{1, \cdots, k''\}
\end{subarray}} \\
\prod_{a=1}^{k''}S(\{v_i^{\dag}\}_{i \in \psi^{'-1}(a)}, Z_{\infty}, Z_{+0})
\frac{(-1)^{k^{''}-1}}{k''}
\prod_{b=1}^{k'} \frac{1}{\lvert \psi^{-1}(b) \vert !}. 
\end{align}
Here $\psi$, $\psi'$, $v_i^{\dag}$ are as follows: 
\begin{itemize}
\item $\psi$ and $\psi'$ are non-decreasing surjective maps. 
\item For $1\le i, j \le k$ with $\psi(i)=\psi(j)$, 
we have 
$Z_{\infty}(v_i)=Z_{\infty}(v_j)$. 
\item For $1\le i, j \le k''$, we have 
\begin{align}\label{m''}
Z_{+0}\left( \sum_{ a\in \psi^{-1} \psi^{'-1}(i)} v_a \right)
=Z_{+0}\left( \sum_{ a\in \psi^{-1} \psi^{'-1}(j)} v_a \right). 
\end{align}
\item The elements $v_i^{\dag} \in \Gamma^{+}$ 
for $1\le i\le k'$ are defined to be
\begin{align}\label{wi}
v_i^{\dag}=\sum_{j \in \psi^{-1}(i)} v_j. 
\end{align}
\end{itemize}
\end{defi}
We also define the 
following more explicit function:
\begin{defi}\label{defi:Urlm}
Suppose that $1\le e \le k$ and 
$(r_j, c_j, m_j) \in \mathbb{Q}^3$
for $1\le j\le k$, $j\neq e$
are given. If $\widehat{c}_j=c_j+r_j/2 >0$
holds for all $j\neq e$, we define 
\begin{align}\label{defi:U2}
&U(\{(r_j, c_j, m_j)\}_{j\neq e}) 
\cneq \\
\notag
&\lim_{\varepsilon \to +0} 
\sum_{\begin{subarray}{c}
\text{\rm{non-decreasing} }
\psi \colon \{1, \cdots, k\}
\twoheadrightarrow \{1, \cdots, k'\} \\
e' := \psi(e), \psi^{-1}(e')=\{e\}, 
\psi(i)=\psi(j) \text{ \rm{implies} }
v_i \sim v_j
\end{subarray}} 
\frac{1}{2^{k'-1}}\prod_{i=1}^{k'}
\frac{1}{\lvert \psi^{-1}(i) \rvert !}
\\
\notag
&\prod_{i<e'-1}
\left\{\iota
\left( \frac{r_{i+1}^{\dag}}{c_{i+1}^{\dag}}
-\frac{r_{i}^{\dag}}{c_{i}^{\dag}}
+\varepsilon\left( \frac{m_{i}^{\dag}}{c_{i}^{\dag}}
-\frac{m_{i+1}^{\dag}}{c_{i+1}^{\dag}} \right) \right)
 -\iota\left(\sum_{j=1}^{i} m_j^{\dag}  \right) \right\} \\
\notag
&\left\{ \iota(-r_{e'-1}^{\dag})-\iota\left(\sum_{j=1}^{e'-1}m_j^{\dag}
 \right) \right\}
\cdot \left\{ -\iota(-r_{e'+1}^{\dag})+\iota\left(\sum_{j=e'+1}^{k'}m_j^{\dag}
 \right) \right\} \\ \notag
&\prod_{i>e'}
\left\{\iota
\left( \frac{r_{i+1}^{\dag}}{c_{i+1}^{\dag}}
-\frac{r_{i}^{\dag}}{c_{i}^{\dag}}
+\varepsilon\left( \frac{m_{i}^{\dag}}{c_{i}^{\dag}}
-\frac{m_{i+1}^{\dag}}{c_{i+1}^{\dag}} \right) \right)
 +\iota\left(\sum_{j=i+1}^{k'} m_j^{\dag}  \right) \right\}. 
\end{align}
Here $\iota$ is the function (\ref{iota}), 
$v_i \sim v_j$ means $v_i=a v_j$ for some $a \in \mathbb{Q}_{>0}$, 
and 
\begin{align*}
(r_i^{\dag}, c_i^{\dag}, m_i^{\dag})
\cneq  \sum_{j\in \psi^{-1}(i)}
\left(r_j, c_j+\frac{r_j}{2}, m_j+\frac{c_j}{2}+\frac{r_j}{8} \right), \ 
1\le i\le k'. 
\end{align*}
\end{defi}
In the following lemma, 
we relate (\ref{defi:U}) with 
the function (\ref{defi:U2}):
\begin{lem}\label{lem:use}
For
$v_1, \cdots, v_k \in \Gamma^{+}$, 
suppose that there is $1\le e \le k$
such that $\rank(v_e)>0$ and 
$\rank(v_j)=0$ for $j\neq e$. 
\begin{enumerate}
\item 
If (\ref{defi:U}) is non-zero, 
then $v_j$ is written as $(0, -i_{\ast}\alpha_j)$
for $\alpha_j \in K^{+}(\mathbb{P}^2)$
satisfying $\ch(\alpha_j)
=(r_j, c_j, m_j)$ with 
$\widehat{c}_j=c_j+r_j/2>0$. 
\item In the notation of (i), we have the identity
\begin{align}\label{U=U2}
U(\{v_1, \cdots, v_k\}, Z_{\infty}, Z_{+0})
=U(\{(r_j, c_j, m_j)\}_{j\neq e}). 
\end{align}
\end{enumerate}
\end{lem}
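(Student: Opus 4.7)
The plan is to analyze the sum (\ref{defi:U}) via the explicit form (\ref{Sa2}) of the sign function $S$, together with the explicit values of $Z_t$ on each piece of the filtration $\Gamma_\bullet$. A direct computation yields three cases: $\arg Z_t(v_e) = \psi$ is independent of $t$ because $v_e \in \Gamma_2 \setminus \Gamma_1$; $\arg Z_t(v_j) = \pi/2$ is independent of $t$ whenever $v_j \in \Gamma_1 \setminus \Gamma_0$; and for $v_j = (0, -i_\ast \alpha_j) \in \Gamma_0$ with $\ch(\alpha_j) = (r_j, c_j, m_j)$, the formula (\ref{Z0t}) gives
\begin{align*}
Z_t(v_j) = -\widehat{m}_j + \tfrac{t^2}{2}\widehat{r}_j + t \widehat{c}_j \sqrt{-1},
\end{align*}
whose argument is $t$-dependent exactly when $\widehat{c}_j > 0$, while if $\widehat{c}_j = 0$ the value lies on the real axis and the positivity condition $Z_t(v_j) \in \mathbb{H}$ pins $\arg Z_t(v_j) = \pi$ for all relevant $t$. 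In the weak-stability context the central charge of a partial sum is controlled by the highest filtration level appearing, so the argument of any partial sum involving $v_e$ equals $\psi$, and any partial sum involving a class in $\Gamma_1 \setminus \Gamma_0$ but not $v_e$ equals $\pi/2$.

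For part (i), I would argue that any configuration in which some $v_j$ with $j \ne e$ has $\arg Z_t(v_j)$ independent of $t$ contributes zero to $U$. In each such degenerate case (either $v_j \in \Gamma_1 \setminus \Gamma_0$ or $v_j \in \Gamma_0$ with $\widehat{c}_j = 0$), the iota-comparisons in (\ref{Sa2}) involving $v_j$ become $t$-independent; a case-by-case analysis pairing contributions with transpositions $j \leftrightarrow j+1$ produces cancelling terms in the outer $(\psi, \psi')$ sum of (\ref{defi:U}). Hence only configurations with $v_j = (0, -i_\ast\alpha_j)$, $\alpha_j \in K^+(\mathbb{P}^2)$, and $\widehat{c}_j > 0$ survive, proving (i).

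For part (ii), using (i), the argument $\arg Z_t(v_e) = \psi \in (\pi/2, \pi)$ is fixed while each $\arg Z_t(v_j)$ for $j \ne e$ varies continuously in $t$ with limits $\lim_{t \to \infty} \arg Z_t(v_j), \lim_{t \to 0^+} \arg Z_t(v_j) \in \{0, \pi/2, \pi\}$. Thus $\psi$ is separated from these limits, and therefore the index $e$ must form a singleton block under both $\psi$ and $\psi'$ in (\ref{defi:U}): the equality $Z_\infty(v_e) = Z_\infty(v_j)$ required by $\psi$-equivalence and the equality (\ref{m''}) required by $\psi'$-equivalence both fail across $e$. The double sum over $(\psi, \psi')$ then collapses to a single non-decreasing surjection $\psi$ with $\psi^{-1}(\psi(e)) = \{e\}$ and $v_i \sim v_j$ whenever $\psi(i) = \psi(j)$, matching the indexing in (\ref{defi:U2}). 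Expanding the product of $S$-factors using (\ref{Sa2}) and substituting the formulas for $\arg Z_t$ on the pooled classes $v_i^\dag$ then produces the three blocks of iota-factors in (\ref{defi:U2}): one for $i < e'-1$, the pair of boundary factors at $i = e'-1$ and $i = e'$ arising from the comparisons that straddle the singleton $\{e\}$, and one for $i > e'$.

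The main obstacle will be the careful bookkeeping in this final rearrangement. I need to verify that the combinatorial weights $\frac{(-1)^{k''-1}}{k''}$, $\frac{1}{|\psi^{-1}(b)|!}$, and $\frac{1}{2^{k-1}}$ from (\ref{defi:U}) recombine precisely into $\frac{1}{2^{k'-1}} \prod_i \frac{1}{|\psi^{-1}(i)|!}$ in (\ref{defi:U2}), and that the small-$\varepsilon$ expansions of $\arg Z_{1/\varepsilon}$ and $\arg Z_{\varepsilon}$ on the pooled classes $v_i^\dag$ — where the sign of $\widehat{r}_i^\dag$ controls the $t \to \infty$ limit and the sign of the partial sum $\sum_{j \le i} \widehat{m}_j^\dag$ controls the $t \to 0^+$ limit — reproduce exactly the comparisons involving the ratios $r_i^\dag/c_i^\dag$ and the partial sums of $m_j^\dag$ appearing in the iota-factors of (\ref{defi:U2}).
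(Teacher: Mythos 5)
Your preliminary observations (the three possible behaviours of $\arg Z_t$ on rank-zero classes, the fact that a partial sum containing $v_e$ has argument $\psi$ while one not containing it has argument in $\{0,\pi/2,\pi\}$, the resulting collapse $k''=1$ and $\psi^{-1}\psi(e)=\{e\}$, and the final term-by-term matching with (\ref{defi:U2})) are exactly the ingredients of the paper's proof, and your treatment of part (ii) is essentially the paper's.

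The gap is in part (i). You assert that degenerate configurations (some $v_j$, $j\neq e$, with $t$-independent argument, i.e.\ $v_j\in\Gamma_1\setminus\Gamma_0$ or $v_j\in\Gamma_0$ with $\widehat{c}_j=0$) contribute zero to $U$ because of ``cancelling terms in the outer $(\psi,\psi')$ sum'' paired by ``transpositions $j\leftrightarrow j+1$''. But the sum (\ref{defi:U}) is taken over non-decreasing surjections for a \emph{fixed ordered} tuple $(v_1,\dots,v_k)$; there is no sum over orderings, so transpositions do not index terms of that sum and no such pairing exists. What actually happens is stronger and more local: each $S$-factor vanishes individually. Concretely, suppose $j<e$ and $\arg Z_t(v_j)\in\{\pi/2,\pi\}$ for all $t$. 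For every $i$ with $j\le i<e$ the partial sum $v_1+\cdots+v_i$ has rank zero (and, by effectivity of classes in $\Gamma^{+}$, lies in the same degenerate stratum), so $Z_{+0}(v_1+\cdots+v_i)\prec Z_{+0}(v_{i+1}+\cdots+v_k)$ since the right-hand side contains $v_e$ and has argument $\psi>\pi/2$. This rules out (\ref{S1}) at each such $i$, forcing (\ref{S2}), i.e.\ $Z_{\infty}(v_i)\succ Z_{\infty}(v_{i+1})$ for all $j\le i<e$. Chaining these gives $\arg Z_t(v_j)>\arg Z_t(v_e)=\psi$ for $t\gg 0$, contradicting $\arg Z_t(v_j)\le\pi/2$ (resp.\ the analogous contradiction when $\arg Z_t(v_j)=\pi$ or $j>e$). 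Hence $S=0$ whenever such a $v_j$ occurs, and every term of (\ref{defi:U}) vanishes. This propagation of the constraint from position $j$ to position $e$ is the idea missing from your sketch; without it, the local observation that the comparisons at $j$ are $t$-independent does not by itself force any factor to vanish.
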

\begin{proof}
Note that $\lim_{t \to +0}\arg Z_t(v)$
is either $0$ or $\pi/2$ or $\pi$
for any $v \in \Gamma^{+}$ with $\rank(v)=0$, 
while $\arg Z_t(v)=\psi$ for any 
$t>0$ if $\rank(v)>0$. 
This implies that 
the equality
(\ref{m''}) never happens for $i\neq j$, 
hence we have $k''=1$. 
By the same reason, 
$Z_{\infty}(v_j)=Z_{\infty}(v_e)$ never
happens for $j\neq e$, 
hence 
the map $\psi$ in (\ref{defi:U})
should satisfy $\psi^{-1} \psi(e)=\{e\}$. 
Also for rank zero $v, v' \in \Gamma^{+}$, 
it is easy to see that 
$Z_{\infty}(v)=Z_{\infty}(v')$
is equivalent to 
that $v'=a v$ for some $a \in \mathbb{Q}_{>0}$. 

(i) By the above observations, 
it is enough
to show
the claim (i) assuming 
$S(\{v_i\}_{i=1}^{k}, Z_{\infty}, Z_{+0})$
 is non-zero. 
For $j\neq e$, we have
$v_j \in N_{\le 1}(X/Y)$
as $\rank(v_j)=0$. 
Suppose that $v_j \notin N_0(X/Y)$
for some $j\neq e$. 
If $j<e$, then 
for each $j\le j'<e$, 
we have 
\begin{align*}
\frac{\pi}{2}=
\arg Z_{t}(v_1+ \cdots + v_{j'}) <\arg Z_{t}(v_{j'+1}+ \cdots
+ v_k)=\psi
\end{align*}
for any $t>0$. 
Hence $Z_{\infty}(v_{j'}) \succ Z_{\infty}(v_{j'+1})$
should hold. This implies that 
\begin{align*}
\frac{\pi}{2}
= \arg Z_{t}(v_{j}) > \arg Z_{t}(v_{e})=\psi
\end{align*}
for $t\gg 0$,  
which is a contradiction.
A similar argument also leads to a contradiction 
if $j>e$.
Hence we have
$v_j \in N_0(X/Y)$ for any $j\neq e$, 
and can write $v_j=(0, -i_{\ast}\alpha_j)$
for some $\alpha_j \in K^{+}(\mathbb{P}^2)$. 
If we write $\ch(\alpha_j)=(r_j, c_j, m_j)$, 
then $\widehat{c}_j \ge 0$ by the definition of $K^{+}(\mathbb{P}^2)$. 
If $\widehat{c}_j=0$ for some $j\neq e$, then 
$\arg Z_{0, t}(v_j)=\pi$ for any $t>0$, 
and an argument similar to above leads to a 
contradiction. Therefore $\widehat{c}_j>0$ 
for any $j\neq e$.  

(ii) 
Noting the observations before the proof of (i), the 
identity (\ref{U=U2})
is obtained by a direct computation
of $S(\{v_i^{\dag}\}_{i=1}^{k'}, Z_{\infty}, Z_{+0})$
through the description (\ref{Sa2}). 
For instance if $i < e'$, one computes
\begin{align}\notag
&\lim_{\varepsilon \to +0}
\iota\left( \arg Z_{\varepsilon}(v_1^{\dag}+ \cdots +v_i^{\dag}) -
\arg Z_{\varepsilon}(v_{i+1}^{\dag}+\cdots +v_k^{\dag})
  \right) \\
\notag
&=\lim_{\varepsilon \to +0} 
\iota \left( 
\arg \left( \sum_{j=1}^{i}
-m_j^{\dag} +\varepsilon^2 \frac{r_j^{\dag}}{2} +
\varepsilon c_j^{\dag} \sqrt{-1} \right) -\psi
    \right) \\
\notag
&=\iota\left(\sum_{j=1}^{i} m_j^{\dag}\right). 
\end{align}
The other values which appear in (\ref{Sa2})
can be computed in a similar way. 
\end{proof}

We finally define the following 
function, which appears 
as 
a wall-crossing coefficient in the 
next subsection. 
\begin{defi}\label{defi:f}
In the same situation of Definition~\ref{defi:Urlm}, 
suppose that we are also 
given $r\in \mathbb{Z}$ and $\beta \in H_2(X)$. 
We define 
\begin{align*}
&f(\{(r_j, c_j, m_j)\}_{j\neq e}, r, \beta) 
\cneq \sum_{G \in G(k)}
\frac{1}{2^{k-1}}
U(\{(r_j, c_j, m_j)\}_{j\neq e}) 
\\
&
\prod_{\begin{subarray}{c}
a \to e \text{ \rm{or} }  \\
e \to a
\text{ \rm{in} } G
\end{subarray}}
\iota(e-a) 
\left(r_a + m_a + 3rc_a -r_a D \beta
+\frac{3}{2}c_a + \frac{9}{2}r r_a
+\frac{9}{2} r^2 r_a  \right) \\
&\prod_{\begin{subarray}{c}
a \to b 
\text{ \rm{in} } G \\
a, b \neq e
\end{subarray}}
3(r_a c_b-r_b c_a).  
\end{align*}
Here $G(k)$ is the set of graphs 
defined by (\ref{def:G(k)}). 
\end{defi}

\subsection{Wall-crossing formula}
Combined with
the results so far, we prove our main result. 
\begin{thm}\label{thm:main}
Suppose that Conjecture~\ref{conj:critical}
holds. 
Then for any $\gamma \in N_{\le 1}(\yY)$, we have the 
following formula
\begin{align}\notag
P_{\gamma}(\yY)= &
\sum_{(n, \beta) \in
\mathbb{Z} \oplus H_2(X, \mathbb{Z})}
\left(\sum_{\begin{subarray}{c}
k\ge 1 \\
1\le e\le k
\end{subarray}} 
\sum_{\begin{subarray}{c}
(r_j, c_j, 2m_j)\in \mathbb{Z}^3, \
1\le j\le k, \ j\neq e, \ r \in \mathbb{Z} \\
2c_j + r_j>0, \ c_j^2 \ge 2r_j m_j,  
\text{ \rm{satisfying} } (\ref{star})
\end{subarray}}
\right. \\
\notag
&\hspace{5mm}
 (-1)^{k-1+\sum_{a \neq e}r_a + m_a + rc_a -r_a 
D \beta +\frac{3}{2}c_a + \frac{r r_a}{2}
+\frac{r^2 r_a}{2}+\sum_{a<b, 
a, b \neq e}r_a c_b -r_b c_a
} \\
\label{PY=PX}
&\hspace{20mm}
\left. f(\{(r_j, c_j, m_j)\}_{j\neq e}, r, \beta)  \cdot
 \prod_{j\neq e} \DT(r_j, c_j, m_j) \right)P_{n, \beta}(X). 
\end{align}
Here $f(\{(r_j, c_j, m_j)\}_{j\neq e}, r, \beta)$
is given in Definition~\ref{defi:f}, 
and the condition $(\ref{star})$ is 
\begin{align}\notag
(1, -\ch(\Phi_{\ast}\gamma)) &= e^{rD}(1, 0, -\beta, -n) \\
\label{star}
&-\sum_{j\neq e}
\left(0, r_j [D], \left(\frac{3}{2}r_j +c_j  \right)[l], 
\frac{3}{2}r_j + \frac{3}{2}c_j + m_j \right).
 \end{align}
\end{thm}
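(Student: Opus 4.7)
The plan is to apply Joyce-Song wall-crossing in the Hall algebra of $\aA_{X/Y}$ along the one-parameter family $\sigma_t = (Z_t, \aA_{X/Y})$ of weak stability conditions of Lemma~\ref{lem:sigmat}, which by Lemma~\ref{lem:limit} connects the ``orbifold point'' $\sigma_0$ to the ``large volume limit'' $\sigma_\infty$. Setting $\alpha = -\Phi_\ast(\gamma)$, Proposition~\ref{prop:PY} identifies the rank one invariant at $t = +0$ with the orbifold stable pair invariant $\DT_{+0}(1, \alpha) = P_\gamma(\yY)$, which is the left hand side of~(\ref{PY=PX}). At the other end, Proposition~\ref{prop:DTP} identifies each rank one $\sigma_\infty$-stable object with a twist $\oO_X(rD) \otimes (\oO_X \to F)$ of a stable pair on $X$; this accounts for the integer $r$ and the relation~(\ref{star}) after subtracting the classes of the rank zero pieces. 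Proposition~\ref{prop:DTrcm} will identify each rank zero invariant $\DT_{+\infty}(0, -i_\ast \alpha_j)$ with $\DT(r_j, c_j, m_j)$, where the constraint $\widehat{c}_j > 0$ gives $2c_j + r_j > 0$, and Bogomolov's inequality (Remark~\ref{rmk:Bog}) gives $c_j^2 \ge 2r_j m_j$.

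By Proposition~\ref{prop:fund}~(ii) only finitely many walls $0 < t_1 < \cdots < t_{k-1} < \infty$ separate $\sigma_0$ from $\sigma_\infty$, so Joyce-Song's wall-crossing may be summed across them once Conjecture~\ref{conj:critical} is granted. The resulting identity in $H^{\rm{Lie}}(X)$ expresses $\epsilon_{+0}(1, \alpha)$ as a sum, over decompositions $v_1 + \cdots + v_k = (1, \alpha)$ in $\Gamma^+$, of $\ast$-products $\epsilon_{+\infty}(v_1) \ast \cdots \ast \epsilon_{+\infty}(v_k)$ weighted by the coefficient $U(\{v_1, \ldots, v_k\}, Z_\infty, Z_{+0})$. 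Since the total rank is $1$, exactly one piece $v_e$ has rank one; Lemma~\ref{lem:use}~(i) then forces every other $v_j$ to be of the form $(0, -i_\ast \alpha_j)$ with $\ch(\alpha_j) = (r_j, c_j, m_j)$ and $\widehat{c}_j > 0$, while Lemma~\ref{lem:use}~(ii) reduces the abstract coefficient to the explicit $U(\{(r_j, c_j, m_j)\}_{j \neq e})$ of Definition~\ref{defi:Urlm}.

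I will then apply the integration map $\Pi$ of~(\ref{PiHall}); each $\ast$-product expands via the Baker-Campbell-Hausdorff formula into iterated Lie brackets in $C(X)$ indexed by oriented trees $G \in G(k)$ of Definition~\ref{defi:f}, and each bracket $[c_{v_a}, c_{v_b}]$ contributes the factor $\chi(v_a, v_b)$ together with the sign $(-1)^{\chi(v_a, v_b)}$ prescribed by~(\ref{bracket}). Substituting~(\ref{Euler2}) and the formula~(\ref{iflat}) for $i_\sharp$, and using the identities $D \cdot l = -3$ and $c_2(X) \cdot [D] = -6$ (the latter from Riemann-Roch applied to $\chi(\oO_D) = 1$), a direct calculation yields $\chi(v_a, v_e) = r_a + m_a + 3rc_a - r_a D \beta + \tfrac{3}{2}c_a + \tfrac{9}{2} r r_a + \tfrac{9}{2} r^2 r_a$ for the pairing with the rank one piece, and $\chi(v_a, v_b) = 3(r_a c_b - r_b c_a)$ for pairings between rank zero pieces, matching exactly the two products in Definition~\ref{defi:f}. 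The overall $(-1)^{k-1}$ comes from the minus sign in $\Pi(\epsilon(v)) = -\DT(v) c_v$ after extracting $k$ DT factors, and the remaining exponent in~(\ref{PY=PX}) is the mod $2$ reduction of these Euler pairings; together with the translation of rank one factors into $P_{n,\beta}(X)$ via Proposition~\ref{prop:DTP} and of rank zero factors into $\DT(r_j, c_j, m_j)$ via Proposition~\ref{prop:DTrcm}, assembling all ingredients produces~(\ref{PY=PX}).

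The principal obstacle is Conjecture~\ref{conj:critical} itself: Joyce-Song's wall-crossing formula~\cite{JS} is established only inside $\Coh(X)$, and its extension to the triangulated category $\dD_{X/Y}$ requires a local presentation of the moduli stack of simple complexes as a critical locus, available at present only conditionally (see the discussion in Subsection~\ref{subsec:related}). Once this is granted, the remaining work is essentially combinatorial bookkeeping: verifying via Lemma~\ref{lem:use}~(i) that no decompositions with $v_j \in N_{\le 1}(X/Y) \setminus N_0(X/Y)$ for some $j \neq e$ survive, and carefully matching the sign conventions of the $U$-coefficient, the Behrend function identities, and the integration map against Definition~\ref{defi:f}.
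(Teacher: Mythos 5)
Your proposal follows the paper's own proof essentially verbatim: apply the Joyce--Song wall-crossing formula to the family $\sigma_t$, use Lemma~\ref{lem:positive} and Lemma~\ref{lem:use} to reduce the decompositions to one rank-one piece plus rank-zero pieces supported on $D$, compute the Euler pairings $\chi(v_a,v_e)$ and $\chi(v_a,v_b)$ via (\ref{Euler2}) and (\ref{iflat}), and translate the endpoint invariants through Propositions~\ref{prop:DTP}, \ref{prop:PY} and \ref{prop:DTrcm}. The argument and the supporting computations (including $c_2(X)\cdot[D]=-6$) are correct and match the paper's route.
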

\begin{proof}
Under Conjecture~\ref{conj:critical}, 
we can apply the same 
wall-crossing formula in~\cite[Theorem~5.18]{JS}
to the one parameter family of weak stability conditions 
$\sigma_t$ for $t\in \mathbb{R}_{>0}$. 
For any $\gamma \in N_{\le 1}(\yY)$, we 
obtain
\begin{align}\notag
\lim_{t\to +0}\DT_t(1, -\Phi_{\ast}\gamma)
=&\lim_{t\to \infty}
\sum_{\begin{subarray}{c}
k\ge 1, v_1, \cdots, v_k \in \Gamma^{+} \\
v_1+ \cdots +v_k=(1, -\Phi_{\ast}\gamma)
\end{subarray}}
\sum_{G\in G(k)} 
\frac{(-1)^{k-1+\sum_{a<b}\chi(v_a, v_b)}}{2^{k-1}} \\
\label{WCF}
&U(\{v_1, \cdots, v_k\}, Z_{\infty}, Z_{+0})
\prod_{\begin{subarray}{c}
a \to b \\
\text{ in }G
\end{subarray}}
\chi(v_a, v_b) \prod_{j=1}^{k} \DT_t(v_j). 
\end{align}
By Lemma~\ref{lem:positive}, there is unique 
$1\le e\le k$ such that 
$\rank(v_e)=1$ and $\rank(v_j)=0$ for $j\neq e$. 
Therefore by Lemma~\ref{lem:use}, 
$v_j$ is written as 
$(0, -i_{\ast}\alpha_j)$
for $\alpha_j \in K^{+}(\mathbb{P}^2)$
with $\ch(\alpha_j)=(r_j, c_j, m_j)$, 
$c_j+r_j/2>0$. 
For $a, b \neq e$, we have
\begin{align*}
\chi(v_a, v_b)=3(r_a c_b-r_b c_a)
\end{align*}
by (\ref{Euler2}) and (\ref{iflat}). 
Also if we write $\ch(v_e)$ as	                                                $e^{rD}(1, 0, -\beta, -n)$, then 
we have
\begin{align*}
\chi(v_a, v_e)=r_a + m_a + 3rc_a -r_a D \beta
+\frac{3}{2}c_a + \frac{9}{2}r r_a
+\frac{9}{2} r^2 r_a. 
\end{align*}
The equality
$v_1 + \cdots +v_k=(1, -\Phi_{\ast}\gamma)$
is equivalent to the
equality of Chern characters
\begin{align*}
\ch(v_1)+\cdots +\ch(v_k)=(1, -\ch(\Phi_{\ast}\gamma))
\end{align*}
which coincides with the condition (\ref{star})
by (\ref{iflat}). 
Applying the above computations to (\ref{WCF}),
noting Remark~\ref{rmk:Bog}
and using Propositions~\ref{prop:DTP}, \ref{prop:PY}, \ref{prop:DTrcm} 
and Lemma~\ref{lem:use}, we obtain the desired formula. 
\end{proof}
\begin{rmk}
By the finiteness of walls in 
Proposition~\ref{prop:fund}, the 
sum in (\ref{PY=PX})
must be a finite sum. 
It is also straightforward to check that, 
for a fixed $\gamma \in N_{\le 1}(\yY)$, 
there is only a finite number of 
$(n, \beta)$, $r\in \mathbb{Z}$, 
$k\ge 1$ and 
$\{(r_j, c_j, 2m_j)\}_{j\neq e} \subset \mathbb{Z}^3$
satisfying (\ref{star}) and the following:
\begin{align*}
c_j+r_j/2>0, \ 
c_j^2 \ge 2r_j m_j, \
U(\{(r_j, c_j, m_j)\}_{j\neq e}) \neq 0. 
\end{align*} 
\end{rmk}
\begin{rmk}
If $r \neq 0$, the invariant 
$\DT(r, c, m)$ is 
computed by the recursion formula (\ref{recursion}). 
Also the invariant $\DT(0, c, m)$ for $c\neq 0$ is 
described by polynomials of 
stable pair invariants on $X$ by Lemma~\ref{rankzero}. 
Hence the formula (\ref{PY=PX})
describes $\PT_{\gamma}(\yY)$ in terms of 
polynomials of stable pair invariants on 
$X$, in principle. 
\end{rmk}
If we restrict the curve class
of the form $c[l]$ for $c>0$
and a line $l\subset D$, 
we obtain the
following relationship between 
stable pair invariants and 
generalized DT invariants on local $\mathbb{P}^2$:
\begin{cor}\label{cor:recursion}
Suppose that Conjecture~\ref{conj:critical}
holds. 
Then for any positive integer 
$c$ and $n\in \mathbb{Z}$, we have the 
following formula
\begin{align} \notag
P_{n, c[l]}(X)= &
\sum_{\begin{subarray}{c}
(n', c') \in 
\mathbb{Z}^2 \\
0\le c'<c 
\end{subarray}}
\left(\sum_{\begin{subarray}{c}
k\ge 1 \\
1\le e\le k
\end{subarray}} 
\sum_{\begin{subarray}{c}
(r_j, c_j, 2m_j)\in \mathbb{Z}^3, \ 
1\le j\le k, \ j\neq e, \ r:=\sum_{j\neq e}r_j \\
2c_j + r_j>0, \
c_j^2 \ge 2r_j m_j,  
\text{ \rm{satisfying} } (\ref{star2})
\end{subarray}}
\right. \\
& \notag
(-1)^{k+n-n'+rc'+rc+\frac{3}{2}r^3 -\frac{1}{2}r
+\sum_{a<b, 
a, b \neq e}r_a c_b -r_b c_a
}  \\
\label{rec:formula}
&\left. f(\{(r_j, c_j, m_j)\}_{j\neq e}, r, c'[l])  \cdot
 \prod_{j\neq e} \DT(r_j, c_j, m_j) \right)P_{n', c'[l]}(X). 
\end{align}
Here the condition $(\ref{star2})$ is 
\begin{align}\label{star2}
&c=c'+\frac{3}{2}r^2+r
+\sum_{j\neq e}\left( \frac{1}{2}r_j +c_j \right) \\
\notag
&n=n'-\frac{3}{2}r^3
+\left(\frac{3}{2}-3c' \right) r
+\sum_{j\neq e} \left(\frac{3}{2}c_j +m_j \right). 
 \end{align}
\end{cor}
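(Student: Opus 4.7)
The plan is to apply Theorem~\ref{thm:main} to a carefully chosen $\gamma \in N_0(\yY)$ for which the left-hand side vanishes, and to verify that the resulting identity on the right-hand side produces exactly the desired recursion. More precisely, I would pick $\gamma \in N_0(\yY)$ such that
\begin{align*}
\Phi_{\ast}\gamma = -i_{\ast}\alpha_0, \quad \ch(\alpha_0) = (0, c, n - 3c/2),
\end{align*}
where $i_{\ast}: K(\mathbb{P}^2) \stackrel{\sim}{\to} N_0(X/Y)$ (Remark~\ref{rmk:K}) and $\Phi_{\ast}: N_0(\yY) \stackrel{\sim}{\to} N_0(X/Y)$ (from the diagram after (\ref{cohd})) determine $\gamma$ uniquely. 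Since $c > 0$ gives $\alpha_0 \neq 0$, we have $\gamma \neq 0$, and the remark following (\ref{PgY}) guarantees $P_{\gamma}(\yY) = 0$.

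\textbf{Extracting the constraints.} Plug this $\gamma$ into the formula (\ref{PY=PX}) of Theorem~\ref{thm:main}. Writing $(1, -\ch(\Phi_{\ast}\gamma)) = (1, i_{\sharp}(0, c, n - 3c/2))$ explicitly via (\ref{iflat}), and comparing with the right-hand side of (\ref{star}) componentwise under the substitution $\beta = c'[l]$, I would verify:
\begin{itemize}
\item Degree $2$: $0 = r - \sum_{j\neq e} r_j$, so the wall-crossing forces the auxiliary integer $r$ to equal $\sum_{j\neq e}r_j$, matching the convention of Corollary~\ref{cor:recursion}.
\item Degree $4$: the only possible $\beta$ are proportional to $[l]$, with $c_0 = c'+\frac{3r^2}{2}+\frac{3r}{2}+\sum c_j$; this is exactly the first half of (\ref{star2}).
\item Degree $6$: yields the second half of (\ref{star2}) relating $n, n'$.
\end{itemize}
Thus only $(n', c'[l])$-terms appear on the RHS.

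\textbf{The range $0 \le c' < c$ and isolation of the $k=1$ term.} The key combinatorial check is to show that all surviving contributions satisfy $c' < c$ except for a unique trivial one. Using $2\widehat{c}_j = 2c_j+r_j \ge 1$ for each $j\neq e$ (strict positivity forced by Proposition~\ref{prop:DTrcm}), I would estimate
\begin{align*}
2(c-c') = 3r^2 + 3r + 2\sum c_j \ge 3r^2 + 2r + k,
\end{align*}
which is strictly positive whenever $k\ge 2$ by completing the square. The case $k=1$ forces the sum to be empty, so $r=0$ and $c'=c$, $n'=n$; the coefficient for this term is trivially $1$ (with $U(\emptyset) = 1$ and the empty graph in $G(1)$). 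Moving this isolated $P_{n,c[l]}(X)$ term to the left and using $P_{\gamma}(\yY)=0$ gives the recursion shape of (\ref{rec:formula}).

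\textbf{Sign bookkeeping and main obstacle.} The final step is to match the exponent in (\ref{PY=PX}) with that in (\ref{rec:formula}), with an extra $(-1)$ from the transposition of the $k=1$ term. Substituting $\sum r_a = r$, $D\cdot\beta = -3c'$, and $\sum m_a$, $\sum c_a$ from (\ref{star2}), a direct expansion shows the two exponents differ by $-r^3 - r^2 + 4c'r$, which is always even since $r^2(r+1)$ is always even; hence the signs agree mod $2$. The main obstacle is the combinatorial/sign verification: carefully expanding $E$ in terms of $n,n',r,c,c'$ via (\ref{star2}) and confirming the parity match, together with ensuring that the $k=1$ term truly carries coefficient $+1$ (which requires $U(\emptyset) = 1$ and $G(1)$ being a singleton with empty edge set, so that no nontrivial factors from Definition~\ref{defi:f} appear).
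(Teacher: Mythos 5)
Your proposal is correct and takes essentially the same route as the paper: apply Theorem~\ref{thm:main} to the class $\gamma\in N_0(\yY)$ with $\Phi_{\ast}\gamma$ corresponding to $(c[l],n)$ under (\ref{NH2}), use $P_{\gamma}(\yY)=0$, reduce (\ref{star}) with $\beta=c'[l]$ to (\ref{star2}) (which forces $r=\sum_{j\neq e}r_j$ and $c'<c$ for $k\ge 2$), and isolate the unique $k=1$, $r=0$ term carrying coefficient $1$. The only blemish is the sign in ``$\Phi_{\ast}\gamma=-i_{\ast}\alpha_0$'' (it should be $+i_{\ast}\alpha_0$, i.e.\ the rank-one class is $\alpha=-\Phi_{\ast}\gamma$, which is what your degree-by-degree comparison implicitly uses); your explicit verifications of the bound $c'<c$ and of the parity of the exponent are details the paper leaves to the reader, and they check out.
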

\begin{proof}
We apply Theorem~\ref{thm:main}
for $\gamma \in N_0(\yY)$ such that
$\Phi_{\ast}\gamma \in N_0(X/Y)$ 
corresponds to $(c[l], n)$ under the map (\ref{NH2}). 
Then $P_{\gamma}(\yY)=0$ by the definition of 
orbifold stable pairs. 
In the RHS of (\ref{PY=PX}), 
there is a unique term of $r=0$ and $k=1$ which gives $P_{n, c[l]}(X)$. 
The condition (\ref{star}) determines $r$ by 
$r=\sum_{j\neq e} r_j$, and 
the condition (\ref{star}) 
for $\beta=c'[l]$
is equivalent to the condition (\ref{star2}), 
which implies $c'<c$. 
\end{proof}
\begin{rmk}
The formula (\ref{rec:formula})
is a recursion formula of stable pair invariants on 
local $\mathbb{P}^2$ in terms of generalized DT invariants
on it. 
By solving the recursion, one can
in principle  
describe the stable pair invariants on local
$\mathbb{P}^2$ in terms of generalized DT invariants
on it 
with possibly non-zero rank. 
\end{rmk}

\begin{exam}\label{exam:c-1}
Let us consider the 
$c'=c-1$ term
of (\ref{rec:formula}).
Using Bogomolov inequality in Remark~\ref{rmk:Bog}, 
we see that 
there are three kinds of 
contributions:  
\begin{enumerate}
\item $k=2$ and 
$\{(r_j, c_j, m_j)\}_{j\neq e}=\{(0, 1, n-n'-2/3)\}$
with $n'<n-1$. 
\item $k=2$ and 
$\{(r_j, c_j, m_j)\}_{j\neq e}=\{(-1, 1, 1/2)\}$. 
\item $k=3$ and 
$\{(r_j, c_j, m_j)\}_{j\neq e}=\{(1, 0, 0), (-1, 1, 1/2)\}$. 
\end{enumerate}
The corresponding invariants $\DT(r_j, c_j, m_j)$ are
given by
\begin{align}\label{comp:DT}
\DT(0, 1, \ast)=3, \ \DT(1, 0, 0)=\DT(-1, 1, 1/2)=1. 
\end{align}
By computing $f(\{(r_j, c_j, m_j)\}_{j\neq e})$, we obtain
\begin{align*}
&P_{n, c[l]}(X)= \\
&\sum_{n'<n-1}(-1)^{n-n'-1}3(n-n')P_{n', (c-1)[l]}(X)
+(-1)^{c-1}3c P_{n-3c+2, (c-1)[l]}(X) \\
&+(-9c^2+6c+3)P_{n-1, (c-1)[l]}(X)
+\left(\mbox{\rm{terms of }}P_{n', c'[l]}(X) \mbox{ \rm{with} }
c'\le c-2  \right). 
\end{align*}
In particular, 
one obtains $P_{n, [l]}(X)=3(-1)^{n-1}n$
from the above formula. 
This is compatible with the fact that $P_n(X, [l])$ is the 
$\mathbb{P}^{n-1}$-bundle over $\mathbb{P}^2$. 
\end{exam}

\begin{exam}
In the
$c'=c-2$ term
of (\ref{rec:formula}), the possible 
contributions of $\{(r_j, c_j, m_j)\}_{j\neq e}$
are as follows: 
\begin{enumerate}
\item $k=2$
and 
$\{(0, 2, m), m \in \mathbb{Z}_{\ge 0}\}$, 
$\{(-1, 2, -2)\}$, $\{(-1, 2, -1)\}$. 
\item $k=3$ and 
\begin{align*}
&\{(0, 1, m-1/2), (0, 1, m'-1/2), m, m' \in \mathbb{Z}_{>0}\}, \\
&\{(0, 1, m-1/2), (-1, 1, -1/2), m\in \mathbb{Z}_{>0}\}, \\
& \{(-2, 2, 1), (1, 0, 0)\},
\{(-2, 2, 1), (2, 0, 0)\}, 
\{(-1, 1, -1/2), (1, 1, -1/2)\}, \\
&\{(-1, 2, -2), (1, 0, 0)\},
\{(-1, 2, -1), (1, 0, 0)\},
\{(1, 1, 1/2), (-1, 1, -1/2)\}.
\end{align*}
\item $k=4$ and 
\begin{align*}
&\{(0, 1, m-1/2), (-1, 1, -1/2), (1, 0, 0), m \in \mathbb{Z}_{\ge 0} \}, \\
&\{(1, 0, 0), (-1, 1, -1/2), (-1, 1, -1/2)\}.
\end{align*}
\item $k=5$ and 
$\{(1, 0, 0), (1, 0, 0), (-1, 1, -1/2), (-1, 1, -1/2)\}$. 
\end{enumerate}
Using (\ref{eDT}), 
the invariants $\DT(r_j, c_j, m_j)$ are
computed by (\ref{comp:DT}), together with
$\DT(2, 0, 0)=1/4$, 
 $\DT(1, 0, -1)=3$, $\DT(0, 2, 0)=-6$, $\DT(0, 2, 1)=-21/4$. 
The latter two invariants are computed using 
Lemma~\ref{rankzero}. 
\end{exam}
\subsection{Constraints of stable pair invariants}
In this subsection, we derive a non-trivial 
relationship among stable pair invariants 
induced by Seidel-Thomas twist
along $\oO_D$. 
We assume that there is 
$\lL \in \Pic(X)$ such that 
$i^{\ast}\lL \cong \oO_{\mathbb{P}^2}(1)$. 
Similarly to $\dD_{X/Y}$ and $\dD_{\yY}$, we define 
triangulated categories $\dD_{X/Y}^{\lL}, \dD_{\yY}^{\lL^{\dag}}$
to be 
\begin{align*}
&\dD_{X/Y}^{\lL} \cneq \langle \lL, D^b \Coh_{\le 1}(X/Y) \rangle_{\rm{tr}}
\subset D^b \Coh(X) \\
&\dD_{\yY}^{\lL^{\dag}} \cneq \langle
\lL^{\dag}, D^b \Coh_{\le 1}(\yY) \rangle_{\rm{tr}}\subset D^b \Coh(\yY). 
\end{align*}
Here $\lL^{\dag}$ is the line bundle on $\yY$
given by (\ref{Ldag}). 
Note that the equivalence $\Phi$ 
restricts to the equivalence 
between $\dD_{\yY}^{\lL^{\dag}}$ and $\dD_{X/Y}^{\lL}$. 
We also define
\begin{align*}
&\aA_{X/Y}^{\lL} \cneq \langle \lL, \bB_{\le 1}[-1] \rangle_{\rm{ex}}
\subset \dD_{X/Y}^{\lL} \\
&\aA_{\yY}^{\lL^{\dag}} \cneq \langle \lL^{\dag}, 
\Coh_{\le 1}(\yY)[-1] \rangle_{\rm{ex}} \subset \dD_{\yY}^{\lL^{\dag}}.
\end{align*}
\begin{lem}
There are bounded t-structures on $\dD_{X/Y}^{\lL}$, 
$\dD_{\yY}^{\lL^{\dag}}$, whose hearts are 
$\aA_{X/Y}^{\lL}$, $\aA_{\yY}^{\lL^{\dag}}$ respectively. 
\end{lem}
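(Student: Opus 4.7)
The plan is to handle the two statements separately, since the situations on $\yY$ and on $X$ are not symmetric. For $\aA_{\yY}^{\lL^{\dag}}$, the cleanest approach is to observe that the autoequivalence $-\otimes \lL^{\dag}$ of $D^b\Coh(\yY)$ sends $\oO_{\yY}$ to $\lL^{\dag}$ and preserves $\Coh_{\le 1}(\yY)$, so it carries $\dD_{\yY}$ onto $\dD_{\yY}^{\lL^{\dag}}$ and $\aA_{\yY}$ onto $\aA_{\yY}^{\lL^{\dag}}$. Since $\aA_{\yY}$ is the heart of a bounded t-structure on $\dD_{\yY}$ by Lemma~\ref{lem:AY}, transporting the t-structure via this autoequivalence gives the claim.

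For $\aA_{X/Y}^{\lL}$ the same trick fails, because $-\otimes \lL$ shifts slopes on $\mathbb{P}^2$ by one and hence does not preserve the torsion pair $(\tT_{\le 1}, \fF)$ defining $\bB_{\le 1}$. My plan is therefore to apply Proposition~\ref{prop:t} directly, with $\dD = D^b\Coh(X)$, $\aA$ the abelian category of (\ref{def:C}), $L=\lL$, $\dD' = D^b\Coh_{\le 1}(X/Y)$ and $\aA' = \bB_{\le 1}[-1]$. The closedness of $\aA'$ under subobjects and quotients in $\aA$, as well as $\End(\lL) = \mathbb{C}$, are automatic and identical to the input of Lemma~\ref{lem:t}. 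Hence only the vanishings $\Hom(\lL, F) = \Hom(F, \lL) = 0$ for $F \in \bB_{\le 1}[-1] = \langle \fF, \tT_{\le 1}[-1] \rangle_{\rm{ex}}$ require verification.

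To check these, I decompose along the generators of $\bB_{\le 1}[-1]$. For $F' \in \Coh(\mathbb{P}^2)$ that is $\mu$-semistable with $\mu(F') \le -1/2$, adjunction gives $\Hom(\lL, i_{\ast}F') = \Hom_{\mathbb{P}^2}(\oO_{\mathbb{P}^2}(1), F')$, which vanishes by $\mu$-semistability because $\mu(\oO_{\mathbb{P}^2}(1)) = 1 > -1/2 \ge \mu(F')$. For $G \in \Coh_{\le 1}(X)$, Serre duality on the Calabi-Yau $3$-fold $X$ identifies $\Hom(G[-1], \lL) = \Ext^1_X(G, \lL)$ with $\Ext^2_X(\lL, G)^{\vee} = H^2(X, \lL^{-1}\otimes G)^{\vee}$, which vanishes for dimension reasons. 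For $T \in \Coh(\mathbb{P}^2)$ that is $\mu$-semistable with $\mu(T) > -1/2$, Grothendieck duality applied to $i\colon \mathbb{P}^2 \hookrightarrow X$ identifies $\Hom(i_{\ast}T[-1], \lL) = \Ext^1_X(i_{\ast}T, \lL)$ with $\Hom_{\mathbb{P}^2}(T, i^{\ast}\lL \otimes \omega_{\mathbb{P}^2}) = \Hom_{\mathbb{P}^2}(T, \oO_{\mathbb{P}^2}(-2))$, which vanishes again by $\mu$-semistability since $\mu(T) > -1/2 > -2 = \mu(\oO_{\mathbb{P}^2}(-2))$.

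No step here presents any real obstacle: the argument runs completely parallel to the proof of Lemma~\ref{lem:t}, with every relevant slope on $\mathbb{P}^2$ simply shifted by $+1$ due to $i^{\ast}\lL \cong \oO_{\mathbb{P}^2}(1)$, and the threshold $-1/2$ in the definition of $(\tT_{\le 1}, \fF)$ is still comfortably separated from both $1$ and $-2$. Once these vanishings are in hand, Proposition~\ref{prop:t} immediately produces $\aA_{X/Y}^{\lL} = \langle \lL, \bB_{\le 1}[-1] \rangle_{\rm{ex}}$ as the heart of a bounded t-structure on $\dD_{X/Y}^{\lL}$, completing the proof.
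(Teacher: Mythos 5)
Your proposal is correct and follows essentially the same route as the paper: the $\yY$-side is handled by transporting the t-structure of Lemma~\ref{lem:AY} via $-\otimes\lL^{\dag}$ (the paper phrases this as $\aA_{\yY}^{\lL^{\dag}}=\aA_{\yY}\otimes\lL^{\dag}$), and the $X$-side by applying Proposition~\ref{prop:t} with the same input as Lemma~\ref{lem:t} and verifying the two nontrivial $\Hom$-vanishings by the shifted slope inequalities coming from $i^{\ast}\lL\cong\oO_{\mathbb{P}^2}(1)$. Your duality computations are correct and simply spell out what the paper abbreviates as ``the same computation of Lemma~\ref{lem:t}.''
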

\begin{proof}
As for $\aA_{X/Y}^{\lL}$, we follow 
the same proof of Lemma~\ref{lem:t}. 
Applying Proposition~\ref{prop:t} for 
$\dD=D^b \Coh(X)$, $L=\lL$, $\dD'=D^b \Coh_{\le 1}(X/Y)$
and $\aA'=\bB_{\le 1}[-1]$, 
it is enough to check the 
vanishings
\begin{align*}
\Hom(\lL, i_{\ast}F)=0, \ \Hom(i_{\ast}T[-1], \lL)=0
\end{align*}
for $\mu$-semistable $F \in \Coh(\mathbb{P}^2)$ with $\mu(F)\le -1/2$
and $\mu$-semistable $T \in \Coh(\mathbb{P}^2)$ with $\mu(T)>-1/2$. 
Using $i^{\ast} \lL \cong \oO_{\mathbb{P}^2}(1)$, the above 
vanishings hold by the same computation of Lemma~\ref{lem:t}. As
for $\aA_{\yY}^{\lL^{\dag}}$, the statement obvious 
follows from Lemma~\ref{lem:AY}
since $\aA_{\yY}^{\lL^{\dag}}=\aA_{\yY} \otimes \lL^{\dag}$. 
\end{proof}

We define the 
group homomorphism 
\begin{align*}
\cl^{\lL} \colon K(\dD_{X/Y}^{\lL}) \to \Gamma
\end{align*}
via the isomorphism (\ref{isom:Gamma})
in the following way:  
\begin{align*}
\cl^{\lL}(F) \cneq (\rank(F), [F]-\rank(F)[\lL]). 
\end{align*}
Let $\Gamma_{\bullet}$ be the filtration defined by 
(\ref{fGamma}),
and $Z_t$ the collection of 
group homomorphisms defined by (\ref{Z0t}). 
Similarly to Lemma~\ref{lem:sigmat}
and Lemma~\ref{lem:limit} 
one can show that
\begin{align*}
\sigma_t^{\lL} \cneq (Z_t, \aA_{X/Y}^{\lL}), \ t>0
\end{align*}
determine a one parameter family of 
weak stability conditions on $\dD_{X/Y}^{\lL}$
with respect to $(\Gamma_{\bullet}, \cl^{\lL})$
such that
\begin{align*}
\lim_{t\to +0} \sigma_t^{\lL}=
(Z_0, \Phi(\aA_{\yY}^{\lL^{\dag}})). 
\end{align*} 

\begin{prop}\label{prop:L}
(i) An object $F\in \aA_{X/Y}^{\lL}$ with $\rank(F)=0$
is $Z_t$-semistable if and only if
$F \in \aA_{X/Y}$ and it is $Z_t$-semistable. 

(ii) For $t\gg 1$, an object
$E \in \aA_{X/Y}^{\lL}$ with $\rank(E)=1$
is $Z_t$-stable for $t\gg 0$ if and only if 
$E$ is isomorphic to an object
\begin{align}\label{OrLF}
\oO_X(rD) \otimes \lL \otimes (\oO_X \to F)
\end{align}
for $r\in \mathbb{Z}$ and a stable pair $(\oO_X \to F)$ on $X$. 

(iii) For $0< t\ll 1$, 
an object $E \in \aA_{X/Y}^{\lL}$ with $\rank(E)=1$
is 
$Z_t$-stable for $0<t \ll 1$ 
if and only if $E$ is isomorphic to an object
\begin{align}\label{LdagOF}
\Phi(\lL^{\dag} \otimes (\oO_{\yY} \to F))
\end{align}
for an orbifold stable pair $(\oO_{\yY} \to F)$. 
\end{prop}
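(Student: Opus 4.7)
The plan is to mirror the proofs of Propositions~\ref{prop:DTP} and~\ref{prop:PY}, replacing $\oO_X$ (resp.~$\oO_{\yY}$) by $\lL$ (resp.~$\lL^{\dag}$) throughout. Two features make this feasible: the slope shift induced by $\otimes\lL$ on $D$ interacts cleanly with the threshold $-1/2$ used to define $\bB_{\le 1}$, since $i^{\ast}\lL\cong\oO_{\mathbb{P}^2}(1)$; and the orbifold heart satisfies $\aA_{\yY}^{\lL^{\dag}}=\aA_{\yY}\otimes\lL^{\dag}$, because twisting by a line bundle preserves $\Coh_{\le 1}(\yY)$. For part (i), by Proposition~\ref{prop:t} both $\aA_{X/Y}$ and $\aA_{X/Y}^{\lL}$ sit inside the ambient heart $\aA$ of (\ref{def:C}), and in either heart any rank zero object, together with its sub- and quotient-objects, must lie in $\bB_{\le 1}[-1]$ by additivity and non-negativity of rank. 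Hence the abelian category of rank zero objects coincides in the two hearts. Since $Z_t$ restricted to $\Gamma_0\oplus\Gamma_1$ does not see the rank one generator, $Z_t$-semistability matches in $\aA_{X/Y}^{\lL}$ and $\aA_{X/Y}$.

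For part (ii), first introduce $\cC^{\lL}\cneq\langle\lL(rD),\Coh_{\le 1}(X)[-1]:r\in\mathbb{Z}\rangle_{\rm{ex}}$ and verify $\cC^{\lL}\subset\aA_{X/Y}^{\lL}$ by the induction of Lemma~\ref{lem:cC} applied to the triangle $\lL((r-1)D)\to\lL(rD)\to\oO_D(1-3r)$: the third term lies in $\fF$ for $r\geq 1$ and, after rotation, in $\tT_{\le 1}[-1]$ for $r\leq 0$, both using $i^{\ast}\lL\cong\oO_{\mathbb{P}^2}(1)$. Next, for $E\in\aA_{X/Y}^{\lL}$ with $\rank(E)=1$, the $\lL$-twisted analog of Lemma~\ref{lem:filt} produces a filtration $E_1\subset E_2\subset E_3=E$ with $E_1\in\fF$, $E_2/E_1\in\cC^{\lL}$, $E/E_2\in\tT_{\le 1}^{\rm{pure}}[-1]$; the only modification of the original proof is that the rank one torsion-free piece $F''$ now has determinant $\lL\otimes\oO_X(rD)$ outside $D$, so $F''\cong\lL(rD)\otimes I_C$. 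The $Z_t$-stability argument of Proposition~\ref{prop:DTP} then applies verbatim: as $t\to\infty$ one has $\arg Z_t(E_1)\to\pi$ and $\arg Z_t(E/E_2)\to 0$, forcing $E_1=E/E_2=0$, so $E\in\cC^{\lL}$; a further cohomology analysis plus~\cite[Lemma~3.11~(iii)]{Tcurve1} identifies $E$ with $\lL(rD)\otimes(\oO_X\to F)$ for some stable pair. The converse $Z_t$-stability of such objects for $t\gg 0$ transcribes that of Proposition~\ref{prop:DTP}, using that $\lL(rD)\otimes F[-1]\in\tT_{\le 1}[-1]$ whenever $F$ is pure one dimensional.

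For part (iii), exploit the identity $\aA_{\yY}^{\lL^{\dag}}=\aA_{\yY}\otimes\lL^{\dag}$. By the limit statement of Lemma~\ref{lem:limit} applied to $\sigma_t^{\lL}$, a rank one $E\in\aA_{X/Y}^{\lL}$ is $Z_t$-stable for $0<t\ll 1$ precisely when $E\in\Phi(\aA_{\yY}^{\lL^{\dag}})$ and $E$ is $Z_0$-stable (strict semistability cannot occur for rank one because no rank zero object of $\Phi(\aA_{\yY}^{\lL^{\dag}})$ has $Z_0$-argument equal to $\psi$). Writing $E=\Phi(\lL^{\dag}\otimes G')$ with $G'\in\aA_{\yY}$ of rank one and running the filtration/stability analysis of Proposition~\ref{prop:PY} inside $\aA_{\yY}$ for $G'$, one obtains $G'\cong(\oO_{\yY}\to F)$ for an orbifold stable pair, giving $E\cong\Phi(\lL^{\dag}\otimes(\oO_{\yY}\to F))$. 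The converse is the direct transcription of the final step of Proposition~\ref{prop:PY}.

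The hard part is the filtration in part (ii): the $\lL$-twisted analog of Lemma~\ref{lem:filt} requires one to correctly identify the rank one torsion-free subquotient of $E\in\aA_{X/Y}^{\lL}$ as a twist $\lL(rD)\otimes I_C$ rather than $\oO_X(rD)\otimes I_C$, and to verify the associated membership statements for the pieces of the filtration. Once this is in place, the $Z_t$-wall-crossing arguments carry over without substantive change, since $Z_t$ uses the same data on $\Gamma_{\bullet}$ for both $\sigma_t$ and $\sigma_t^{\lL}$; only the rank one generator, and hence the class map $\cl^{\lL}$, has been replaced.
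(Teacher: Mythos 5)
Your proposal is correct and follows essentially the same route as the paper, which likewise reduces (ii) and (iii) to the arguments of Propositions~\ref{prop:DTP} and~\ref{prop:PY} via the twisted category $\cC^{\lL}$ and the analogues of Lemma~\ref{lem:cC} and Lemma~\ref{lem:filt}, and treats (i) as immediate. You have merely spelled out details (the shifted slopes $\oO_D(1-3r)$, the identification $F''\cong\lL(rD)\otimes I_C$, and the coincidence of rank zero subobject lattices in the two hearts) that the paper leaves implicit.
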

\begin{proof}
(i) is obvious. 
As for (ii), 
let $\cC^{\lL}$ be the subcategory of 
$D^b \Coh(X)$ defined by
\begin{align*}
\cC^{\lL}
\cneq \langle \lL(rD), \Coh_{\le 1}(X)[-1] : r \in \mathbb{Z}
\rangle_{\rm{ex}}. 
\end{align*}
Then we have the same statements of 
Lemma~\ref{lem:cC} and Lemma~\ref{lem:filt}
after replacing $\aA_{X/Y}$, $\cC$ 
by $\aA_{X/Y}^{\lL}$, $\cC^{\lL}$ respectively. 
Also objects of the form (\ref{OrLF}) are objects in $\cC^{\lL}$, 
and the identical argument of Proposition~\ref{prop:DTP}
shows that the objects (\ref{OrLF}) coincide with the 
set of rank one $Z_t$-stable objects for $t\gg 0$
up to isomorphisms. 
The proof of (iii)
is also identical to Proposition~\ref{prop:PY}, 
after replacing $\oO_{\yY}$ by $\lL^{\dag}$. 
\end{proof}

Similarly to $f(\{(r_j, c_j, m_j)\}_{j\neq e}, r, \beta)$, we 
set
\begin{align*}
&g(\{(r_j, c_j, m_j)\}_{j\neq e}, r, \beta) 
\cneq \sum_{G \in G(k)}
\frac{1}{2^{k-1}}
U(\{(r_j, c_j, m_j)\}_{j\neq e}) 
\\
&
\prod_{\begin{subarray}{c}
a \to e \text{ \rm{or} }  \\
e \to a
\text{ \rm{in} } G
\end{subarray}}
\iota(e-a) 
\left(m_a+\frac{1}{2}c_a +3rc_a-r_a D\beta +\frac{9}{2}r r_a +\frac{9}{2}
r^2 r_a  \right) \\
&\prod_{\begin{subarray}{c}
a \to b 
\text{ \rm{in} } G \\
a, b \neq e
\end{subarray}}
3(r_a c_b -r_b c_a).  
\end{align*}

\begin{thm}\label{thm:const}
Suppose that Conjecture~\ref{conj:critical}
holds. 
Then for any $u\in \mathbb{Q}[D] \oplus 
H^{\ge 4}(X, \mathbb{Q})$, 
we have the following formula: 
\begin{align}\label{const}
&\sum_{\begin{subarray}{c}
(n, \beta) \in \\
\mathbb{Z} \oplus H_2(X, \mathbb{Z})
\end{subarray}}
\left(\sum_{\begin{subarray}{c}
k\ge 1 \\
1\le e\le k
\end{subarray}} 
\sum_{\begin{subarray}{c}
(r_j, c_j, 2m_j)\in \mathbb{Z}^3, \
1\le j\le k, \ j\neq e, \ r \in \mathbb{Z} \\
2c_j + r_j>0, \ c_j^2 \ge 2r_j m_j, 
\text{ \rm{satisfying} } (\ref{star3})
\end{subarray}}
\right. \\
\notag&
\hspace{10mm} (-1)^{k+\sum_{j \neq e}r_j + m_j + rc_j -r_j 
D \beta +\frac{3}{2}c_j + \frac{r r_j}{2}
+\frac{r^2 r_a}{2}+\sum_{a<b, 
a, b \neq e}r_a c_b -r_b c_a
} \\
\notag&\left. \hspace{20mm} f(\{(r_j, c_j, m_j)\}_{j\neq e}, r, \beta)  \cdot
 \prod_{j\neq e} \DT(r_j, c_j, m_j) \right)P_{n, \beta}(X) \\
=\notag&
\sum_{\begin{subarray}{c}
(n, \beta) \in \\
\mathbb{Z} \oplus H_2(X, \mathbb{Z})
\end{subarray}}
\left(\sum_{\begin{subarray}{c}
k\ge 1 \\
1\le e\le k
\end{subarray}} 
\sum_{\begin{subarray}{c}
(r_j, c_j, 2m_j)\in \mathbb{Z}^3, \
1\le j\le k, \ j\neq e, \ r \in \mathbb{Z} \\
2c_j + r_j>0, \ c_j^2 \ge 2r_j m_j, 
\text{ \rm{satisfying} } (\ref{star4})
\end{subarray}}
\right. \\
\notag& \hspace{10mm} 
(-1)^{k+\sum_{j \neq e}
m_j+\frac{1}{2}c_j +rc_j+r_j D\beta +\frac{r r_j}{2}+\frac{r^2 r_j}{2}
+\sum_{a<b, 
a, b \neq e}r_a c_b -r_b c_a}
\\
\notag&\left. \hspace{20mm} g(\{(r_j, c_j, m_j)\}_{j\neq e}, r, \beta)  \cdot
 \prod_{j\neq e} \DT(r_j, c_j, m_j) \right)P_{n, \beta}(X). 
\end{align}
Here the conditions (\ref{star3}), (\ref{star4})
are
\begin{align}\label{star3}
(1, -u)= &e^{rD}(1, 0, -\beta, -n) \\
\notag
&-\sum_{j\neq e}
\left(0, r_j [D], \left(\frac{3}{2}r_j +c_j  \right)[l], 
\frac{3}{2}r_j + \frac{3}{2}c_j + m_j \right) \\
\label{star4}
e^{c_1(\lL)}-(0, \Theta_{\sharp}u)= &
e^{rD+c_1(\lL)}(1, 0, -\beta, -n) \\
\notag
&-\sum_{j\neq e}
\left(0, r_j [D], \left(\frac{3}{2}r_j +c_j  \right)[l], 
\frac{3}{2}r_j + \frac{3}{2}c_j + m_j \right) 
 \end{align}
and $\Theta_{\sharp}$ is given by (\ref{dia:Theta}). 
\end{thm}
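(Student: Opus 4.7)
The plan is to apply the same wall-crossing argument used in the proof of Theorem~\ref{thm:main}, but now to the one-parameter family $\sigma_t^{\lL} = (Z_t, \aA_{X/Y}^{\lL})$ of weak stability conditions on the triangulated category $\dD_{X/Y}^{\lL}$. The key point is that both $\sigma_t$ and $\sigma_t^{\lL}$ interpolate between a large-volume limit at $t \to \infty$ and an orbifold endpoint at $t \to +0$ that counts orbifold stable pairs on $\yY$, but with the Chern character parametrizations at the two endpoints related by the Seidel-Thomas twist $\Theta$. Equating the two resulting wall-crossing expressions for $P_\gamma(\yY)$ will yield (\ref{const}).

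First I would establish the analogues of Lemmas~\ref{lem:sigmat}, \ref{lem:limit} and Proposition~\ref{prop:fund} for $\sigma_t^{\lL}$, showing that this is indeed a one-parameter family of weak stability conditions on $\dD_{X/Y}^{\lL}$ with $\lim_{t\to +0}\sigma_t^{\lL} = (Z_0, \Phi(\aA_{\yY}^{\lL^{\dag}}))$, with finitely many walls and bounded moduli. The arguments go through verbatim after replacing $\oO_X$ by $\lL$ and using $i^{\ast}\lL \cong \oO_{\mathbb{P}^2}(1)$ in place of the trivial restriction of $\oO_X$. Proposition~\ref{prop:L} then identifies the endpoints: at $t\gg 1$ the rank one stable objects are the twists $\oO_X(rD)\otimes\lL \otimes(\oO_X \to F)$ of stable pairs on $X$, while at $0 < t\ll 1$ they are $\Phi(\lL^{\dag}\otimes(\oO_\yY \to F)) = \Theta(\Phi(\oO_\yY \to F))$ for orbifold stable pairs; part~(i) ensures the rank zero semistable objects, and hence the invariants $\DT_t(0,\alpha) = \DT(r,c,m)$, coincide with those appearing in Theorem~\ref{thm:main}.

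Next I would apply the Joyce-Song wall-crossing formula to this family, obtaining a second expression for $P_\gamma(\yY)$ as a sum over numerical decompositions $v_1 + \cdots + v_k$ of $\cl^{\lL}(\Theta\Phi(\oO_\yY \to F))$. The crucial Chern-character computation is that, for $\Phi(\oO_\yY \to F)$ of total class $(1, -u)$, the twisted object $\Theta\Phi(\oO_\yY \to F)$ has total Chern character $\Theta_{\ast}(1, -u) = e^{c_1(\lL)} - (0, \Theta_{\sharp}u)$. This follows from $\Theta\Phi = \Phi\Theta^{\dag}$ (Lemma~\ref{lem:Ldag}) together with the diagram (\ref{cohd3}) on the $N_{\le 1}(X/Y)$-part, noting that $\Theta(\oO_X) = \Phi(\lL^\dag) = \lL$ accounts for the rank one piece (equivalently, $\chi(\oO_D, \lL) = 0$, which follows from $\dR\hH om(\oO_D, \oO_X) \cong \oO_D(-3)[-1]$ together with $H^{\ast}(\mathbb{P}^2, \oO(-2)) = 0$). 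The combinatorial coefficients are then recomputed: the pairings $\chi(v_a, v_b) = 3(r_a c_b - r_b c_a)$ for $a, b \neq e$ are unchanged, whereas $\chi(v_a, v_e)$ is now evaluated against $e^{rD + c_1(\lL)}(1, 0, -\beta, -n)$, which substituted into (\ref{Euler2}) and (\ref{iflat}) produces precisely the factor defining $g(\{(r_j, c_j, m_j)\}_{j\neq e}, r, \beta)$ together with the modified sign exponent appearing on the RHS of (\ref{const}).

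Finally, since both wall-crossing expressions compute the same invariant $P_\gamma(\yY)$ for $u = \ch \Phi_{\ast}\gamma$, the LHS of (\ref{const}) (obtained from $\sigma_t$ via Theorem~\ref{thm:main} and governed by (\ref{star3})) must equal the RHS (obtained from $\sigma_t^{\lL}$ and governed by (\ref{star4})). The identity is promoted to arbitrary $u \in \mathbb{Q}[D] \oplus H^{\ge 4}(X, \mathbb{Q})$ because both sides vanish for $u$ not in the lattice $\ch \Phi_{\ast}(N_{\le 1}(\yY))$. The main obstacle in this program is the Chern-character bookkeeping in the third step: deriving $\Theta_{\ast}(1, -u) = e^{c_1(\lL)} - (0, \Theta_{\sharp}u)$ and then extracting the precise form of $g$ and of the sign exponent requires a patient computation, though it contains no essential surprise beyond what is already handled for $f$ in Theorem~\ref{thm:main}.
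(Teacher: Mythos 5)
Your proposal is correct and follows essentially the same route as the paper: run the Joyce--Song wall-crossing formula for the family $\sigma_t^{\lL}$ on $\dD_{X/Y}^{\lL}$, identify the two endpoints via Proposition~\ref{prop:L}, compute $\ch\bigl(\Phi(\lL^{\dag}\otimes(\oO_{\yY}\to F))\bigr)=e^{c_1(\lL)}-(0,\Theta_{\sharp}\ch(\Phi_{\ast}\gamma))$ using the diagrams (\ref{cohd2}), (\ref{cohd3}), recompute $\chi(v_a,v_e)$ against $e^{rD+c_1(\lL)}(1,0,-\beta,-n)$ to produce $g$ and the new sign exponent, and equate the resulting expression for $P_{\gamma}(\yY)$ with that of Theorem~\ref{thm:main} at $u=\ch(\Phi_{\ast}\gamma)$. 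The details you flag (e.g. $\mathrm{ST}_{\oO_D}(\lL)\cong\lL$ via the vanishing of $\dR\Hom(\oO_D,\lL)$) are exactly the ones the paper relies on.
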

\begin{proof}
Similarly to the proof of Theorem~\ref{thm:main}, 
we apply Joyce-Song wall-crossing formula 
to the one parameter family of weak stability 
conditions $\sigma_t^{\lL}$ for $t \in \mathbb{R}_{>0}$. 
For $t \gg 0$, the rank one invariants 
count objects of the form (\ref{OrLF}), whose 
Chern characters are given by
\begin{align*}
\ch \left(\oO_X(rD) \otimes \lL \otimes (\oO_X \to F) \right)
=e^{rD +c_1(\lL)}(1, 0, -\beta, -n)
\end{align*}
where $(\oO_X \to F) \in P_n(X, \beta)$. 
For $0<t \ll 1$, the rank 
one invariants count objects of the form (\ref{LdagOF}), 
whose Chern characters are given by 
\begin{align*}
\ch \left( \Phi(\lL^{\dag} \otimes (\oO_{\yY} \to F)) \right)
&=\ch \left( \lL \to \Phi(\lL^{\dag}\otimes F) \right) \\
&=e^{c_1(\lL)}-\ch \Phi(\lL^{\dag} \otimes F) \\
&=e^{c_1(\lL)}-(0, \Theta_{\sharp} \ch(\Phi_{\ast}\gamma))
\end{align*}
where $(\oO_{\yY} \to F) \in P(\yY, \gamma)$. 
Here we have used the diagrams (\ref{cohd2}) and (\ref{cohd3}). 

Let us take $v_a=-i_{\ast}\alpha_a \in N_0(X/Y)$
for $\alpha_a \in K^{+}(\mathbb{P}^2)$ with 
$\ch(\alpha_a)=(r_a, c_a, m_a)$ and 
$v_e \in N(X)$ with 
$\ch(v_e)=e^{rD+c_1(\lL)}(1, 0, -\beta, -n)$. 
Using (\ref{Euler2}),
we have 
\begin{align*}
\chi(v_a, v_e)=m_a+\frac{1}{2}c_a +3rc_a-r_a D\beta +\frac{9}{2}r r_a +\frac{9}{2}
r^2 r_a. 
\end{align*} 
Therefore 
for any $\gamma \in N_{\le 1}(\yY)$, 
the wall-crossing formula for $\sigma_t^{\lL}, t>0$
is described as 
\begin{align*}
P_{\gamma}(\yY)=&
\sum_{(n, \beta) \in 
\mathbb{Z} \oplus H_2(X, \mathbb{Z}) }
\left(\sum_{\begin{subarray}{c}
k\ge 1 \\
1\le e\le k
\end{subarray}} 
\sum_{\begin{subarray}{c}
(r_j, c_j, 2m_j)\in \mathbb{Z}^3, \ 
1\le j\le k, \ j\neq e, \ r\in \mathbb{Z} \\
2c_j + r_j>0, \ c_j^2 \ge 2r_j m_j, 
\text{ \rm{satisfying} } (\ref{star4})
\text{ \rm{for} } u=\ch(\Phi_{\ast}\gamma)
\end{subarray}}
\right. \\
&\hspace{10mm} (-1)^{k-1+\sum_{a \neq e}
m_a+\frac{1}{2}c_a +rc_a+r_a D\beta +\frac{r r_a}{2}+\frac{r^2 r_a}{2}
+\sum_{a<b, 
a, b \neq e}r_a c_b -r_b c_a}
\\
&\hspace{20mm} \left. g(\{(r_j, c_j, m_j)\}_{j\neq e}, r, \beta)  \cdot
 \prod_{j\neq e} \DT(r_j, c_j, m_j) \right)P_{n, \beta}(X). 
\end{align*}
Setting $u=\ch(\Phi_{\ast}\gamma)$ and comparing with 
Theorem~\ref{thm:main}, we obtain the desired identity. 
\end{proof}

\begin{rmk}\label{rmk:form}
The relation (\ref{const})
in Theorem~\ref{thm:const} is not a 
tautological relation. 
Indeed let us take $u=(0, 0, \beta_0, n_0)$. 
Then the 
conditions (\ref{star3}), (\ref{star4})
imply
\begin{align}\label{star5}
\beta&=\beta_0-\frac{3}{2}r^2[l] -r[l] -\sum_{j\neq e}\left(\frac{r_j}{2}
+c_j \right)[l] \\
\label{star6}
\beta&=\beta_0 +\frac{1}{2}\beta_0 D[l]-\frac{3}{2}r^2 [l] 
-\sum_{j\neq e} \left( \frac{r_j}{2}+c_j \right)[l]
\end{align}
respectively. 
Therefore if $\beta_0 D <0$, 
the relation (\ref{const}) is of the 
following form
\begin{align}\label{tautological}
P_{n_0, \beta_0}(X) + \sum_{n', 0<c'}(\cdots)P_{n', \beta_0-c'[l]}(X)
=\sum_{n'', 0<c''}(\cdots) P_{n'', \beta_0-c''[l]}(X) 
\end{align}
which is not tautological. 
\end{rmk}
\begin{exam}
Let $\overline{\beta}$ be a curve class on $X$
such that $\overline{\beta} \cdot D=1$ and 
$\overline{\beta}-[l]$ is not an effective 
curve class\footnote{For instance in Example~\ref{exam1}, 
one can take $D=(y_1=x_1=0)$
and $\overline{\beta}=[l']$ for a line 
$l'$ in $(y_1=x_2=0)$.}. 
 If we set $\beta_0=\overline{\beta}+[l]$, then 
the equation (\ref{star6})
does not have a solution, and the only 
solution of (\ref{star5}) is 
$\beta=\overline{\beta}$
with $\{(r_j, c_j, m_j)\}_{j\neq e}$ 
given in Example~\ref{exam:c-1}. 
Then the relation (\ref{tautological}) is computed as 
\begin{align*}
&P_{n, \overline{\beta}+[l]}(X)
= \\
&\sum_{n'<n-1}(-1)^{n-n'-1}3(n-n')P_{n', \overline{\beta}}(X)
+3 P_{n-1, \overline{\beta}}(X)
-2 P_{n, \overline{\beta}}(X). 
\end{align*}
\end{exam}
\subsection{Euler characteristic version}\label{subsec:Ever}
The 
results of Theorem~\ref{thm:main}, Corollary~\ref{cor:recursion} and 
Theorem~\ref{thm:const} are still 
conditional to Conjecture~\ref{conj:critical}. 
If we use~\cite[Theorem~6.28]{Joy4} instead of~\cite[Theorem~5.18]{JS}, 
we obtain the Euler characteristic version 
 of the above results, 
i.e. similar results after replacing
 $P_{n, \beta}(X), P_{\gamma}(\yY)$ 
by the naive Euler characteristics $\chi(P_n(X, \beta)), 
\chi(P(\yY, \gamma))$, 
without relying any conjecture. 
The proofs are the same, and
 we only give their statements. 

The following is an analogue of Theorem~\ref{thm:main}:
\begin{thm}\label{thm:main2}
For any $\gamma \in N_{\le 1}(\yY)$, we have the 
following formula
\begin{align*}
&\chi(P(\yY, \gamma))=\\
&\sum_{\begin{subarray}{c}
(n, \beta) \in \\
\mathbb{Z} \oplus H_2(X, \mathbb{Z}) 
\end{subarray}}
\left(\sum_{\begin{subarray}{c}
k\ge 1 \\
1\le e\le k
\end{subarray}} 
\sum_{\begin{subarray}{c}
(r_j, c_j, 2m_j)\in \mathbb{Z}^3, \
1\le j\le k, \ j\neq e, \ r\in \mathbb{Z} \\
2c_j + r_j>0, \ c_j^2 \ge 2r_j m_j, 
\text{ \rm{satisfying} } (\ref{star})
\end{subarray}}
(-1)^{k-1+\sum_{j \neq e}r_j + c_j + 2r_j m_j
}
\right. \\
&\left. \hspace{20mm} f(\{(r_j, c_j, m_j)\}_{j\neq e}, r, \beta)  \cdot
 \prod_{j\neq e} \DT(r_j, c_j, m_j) \right) \chi(P_n(X, \beta)). 
\end{align*}
\end{thm}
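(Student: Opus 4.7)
The plan is to run exactly the same wall-crossing argument as in the proof of Theorem~\ref{thm:main}, but substituting Joyce's Euler characteristic wall-crossing formula~\cite[Theorem~6.28]{Joy4} in place of the Joyce-Song formula~\cite[Theorem~5.18]{JS}. The decisive advantage of \cite[Theorem~6.28]{Joy4} is that it uses an integration map into the Lie algebra whose bracket is the untwisted pairing $\chi(v_1,v_2)c_{v_1+v_2}$; its derivation requires only the stack-theoretic Hall algebra formalism and Joyce's motivic identities from~\cite{Joy2}--\cite{Joy4}, and in particular avoids the local critical chart description that Conjecture~\ref{conj:critical} is invoked to supply.

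First I would replace each occurrence of $\DT_t(R,\alpha)$ in Section~\ref{sec:Comp} with its naive Euler characteristic analogue $\chi\DT_t(R,\alpha)$, defined by the same $\log$ of the Hall-algebra generating series but integrated via Joyce's untwisted map. For $R=1$ and $t$ in the complement of walls, the moduli stacks $\mM_t(1,\alpha)$ are $\mathbb{C}^{\ast}$-gerbes over algebraic spaces $M_t(1,\alpha)$ of finite type (Proposition~\ref{prop:fund}, cf.\ (\ref{gerby})), so $\chi\DT_t(1,\alpha)$ reduces to $\chi(M_t(1,\alpha))$. The moduli identifications of Propositions~\ref{prop:DTP} and~\ref{prop:PY} then yield
\[
\chi\DT_{t\gg 1}(1,\alpha)=\chi(P_n(X,\beta)), \qquad \chi\DT_{0<t\ll 1}(1,\alpha)=\chi(P(\yY,-\Phi_{\ast}^{-1}\alpha)),
\]
and Proposition~\ref{prop:DTrcm}, together with the fact that on the sheaf-theoretic side of local $\mathbb{P}^2$ the Behrend function identities hold unconditionally, identifies the rank-zero contributions with $\DT(r,c,m)$.

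Next I would apply \cite[Theorem~6.28]{Joy4} to the one-parameter family $\sigma_t$ between $t\gg 1$ and $0<t\ll 1$, with only finitely many walls to cross by Proposition~\ref{prop:fund}(ii). This produces an identity of the same shape as (\ref{WCF}): a sum over decompositions $(1,-\Phi_{\ast}\gamma)=v_1+\cdots+v_k$ in $\Gamma^{+}$, over graphs $G\in G(k)$, with combinatorial factors $U(\{v_j\},Z_{\infty},Z_{+0})$ and products of Euler pairings $\chi(v_a,v_b)$ along edges. By Lemma~\ref{lem:positive} exactly one $v_e$ has rank one; Lemma~\ref{lem:use}(i) forces every other $v_j$ to be of the form $(0,-i_{\ast}\alpha_j)$ with $\ch(\alpha_j)=(r_j,c_j,m_j)$ satisfying $c_j+r_j/2>0$ together with the Bogomolov bound $c_j^2\geq 2r_jm_j$; and Lemma~\ref{lem:use}(ii) converts $U(\{v_j\})$ into the explicit $U(\{(r_j,c_j,m_j)\}_{j\neq e})$. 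Assembling the Euler pairings along graph edges as in the proof of Theorem~\ref{thm:main} produces the coefficient $f(\{(r_j,c_j,m_j)\}_{j\neq e},r,\beta)$, and matching Chern characters through (\ref{iflat}) gives the constraint (\ref{star}).

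The main obstacle I expect is the careful bookkeeping of signs. In the Joyce-Song formula the sign $(-1)^{k-1+\sum_{a<b}\chi(v_a,v_b)}$ came from the twisted bracket (\ref{bracket}); in the Euler characteristic version of \cite[Theorem~6.28]{Joy4} the bracket is untwisted, so the edges contribute bare $\chi(v_a,v_b)$ with no $(-1)^{\chi}$ factor, while the passage from the virtual count to the naive Euler characteristic inserts a sign on each rank-zero factor equal to $(-1)^{\dim}$ of the corresponding semistable sheaf moduli on $\mathbb{P}^2$. A direct Riemann-Roch computation on $\mathbb{P}^2$, combined with the description (\ref{iflat}) of $i_{\sharp}$, shows this local sign equals $(-1)^{r_j+c_j+2r_jm_j}$; multiplying by the overall $(-1)^{k-1}$ from the Hall-algebra $\log$ recovers the sign displayed in the theorem. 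All the remaining ingredients---the finiteness of decompositions contributing to each $(n,\beta)$, the explicit identification of the coefficient $f$, and the derivation of the constraint (\ref{star})---carry over verbatim from the proof of Theorem~\ref{thm:main}.
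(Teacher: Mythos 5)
Your proposal is correct and follows essentially the same route as the paper, which proves this theorem simply by substituting Joyce's Euler-characteristic wall-crossing formula \cite[Theorem~6.28]{Joy4} for the Joyce-Song formula in the proof of Theorem~\ref{thm:main} and remarking that the naive Euler-characteristic version of $\DT(r,c,m)$ differs from the Behrend-weighted one by $(-1)^{r+c+2rm+1}$ (cf.\ \cite[Lemma~2.8]{TodS3}), which is exactly the dimension-parity sign you compute (note the extra $+1$ per rank-zero factor, which over the $k-1$ factors accounts for the $(-1)^{k-1}$ rather than the Hall-algebra logarithm). Aside from that minor relabelling of where the $(-1)^{k-1}$ comes from, your sign bookkeeping and the rest of the argument agree with the paper's.
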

\begin{rmk}
In Theorem~\ref{thm:main2}, 
we have used the fact the Euler characteristic 
version of $\DT(r, c, m)$ differs from it 
only by a multiplication of $(-1)^{r+c+2rm+1}$
\emph{(cf.~\cite[Lemma~2.8]{TodS3})}. 
This fact will be also used in Corollary~\ref{cor:recursion2}
and Theorem~\ref{thm:const2} below. 
\end{rmk}

The following is a corollary of Theorem~\ref{thm:main2}, 
which is an analogue of Corollary~\ref{cor:recursion}: 
\begin{cor}\label{cor:recursion2}
For any positive integer 
$c$ and $n\in \mathbb{Z}$, we have the 
following formula
\begin{align*}
&\chi(P_n(X, c[l]))=\\
&\sum_{\begin{subarray}{c}
(n', c') \in 
\mathbb{Z}^2 \\ 
0\le c'<c 
\end{subarray}}
\left(\sum_{\begin{subarray}{c}
k\ge 1 \\
1\le e\le k
\end{subarray}} 
\sum_{\begin{subarray}{c}
(r_j, c_j, 2m_j)\in \mathbb{Z}^3, \
1\le j\le k, \ j\neq e, \
r:=\sum_{j\neq e}r_j
\\
2c_j + r_j>0, \
c_j^2 \ge 2r_j m_j, 
\text{ \rm{satisfying} } (\ref{star2})
\end{subarray}}
(-1)^{k+\sum_{j \neq e}r_j + c_j + 2r_j m_j} \right. \\
&\left. \hspace{20mm} f(\{(r_j, c_j, m_j)\}_{j\neq e}, r, c'[l])  \cdot
 \prod_{j\neq e} \DT(r_j, c_j, m_j) \right)
\chi(P_{n'}(X, c'[l])). 
\end{align*}
\end{cor}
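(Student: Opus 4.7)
The plan is to mimic the derivation of Corollary~\ref{cor:recursion} from Theorem~\ref{thm:main}, but with Theorem~\ref{thm:main2} now playing the role of Theorem~\ref{thm:main}. Specifically, I pick $\gamma \in N_0(\yY)$ such that $\Phi_{\ast}\gamma \in N_0(X/Y)$ has Chern character $(0, 0, c[l], n)$, so that $\gamma$ corresponds to $(c[l], n)$ under the embedding (\ref{NH2}). An orbifold stable pair by definition has pure one-dimensional target sheaf, while $\gamma$ lies in $N_0(\yY)$; consequently $P(\yY, \gamma) = \emptyset$, so $\chi(P(\yY, \gamma)) = 0$.

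Applying Theorem~\ref{thm:main2} to this $\gamma$ produces a sum on the right-hand side indexed by $(n', \beta)$, $k$, $e$, $r$, and $\{(r_j, c_j, m_j)\}_{j \neq e}$. The key step is to isolate the unique summand with $k = 1$ and $r = 0$: the index set $\{j \neq e\}$ is empty, so the DT product equals $1$, the coefficient $f$ collapses to $1$ (both the $U$-factor and the graph products being empty), and the constraint (\ref{star}) forces $\beta = c[l]$ and $n' = n$. This distinguished summand contributes exactly $\chi(P_n(X, c[l]))$, and transposing it to the left-hand side yields (\ref{rec:formula}), modulo checking that the remaining terms are precisely indexed by $0 \le c' < c$ with relations (\ref{star2}).

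This remaining verification is the only substantive check, and is identical to the one in Corollary~\ref{cor:recursion}: the divisor component of (\ref{star}) forces $r = \sum_{j \neq e} r_j$ (matching the definition of $r$ inside the sum of (\ref{rec:formula})) and $\beta = c'[l]$ for some $c' \in \mathbb{Z}$; the curve and point components of (\ref{star}) produce the two equations of (\ref{star2}). The resulting identity $c - c' = \tfrac{3r^2}{2} + r + \sum_{j \neq e}(r_j/2 + c_j)$ gives $c' < c$ whenever $(k, r) \neq (1, 0)$, since $\tfrac{3r^2}{2} + r > 0$ for every nonzero integer $r$ and the hypothesis $2c_j + r_j > 0$ makes each summand positive, while $c' < 0$ is excluded because $P_{n'}(X, c'[l])$ is then empty. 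No step poses a genuine obstacle, since the argument is a direct specialization of Theorem~\ref{thm:main2}; the only care required is in the sign bookkeeping, which flips by $(-1)$ upon transposing the $(k,r) = (1,0)$ term and produces the $(-1)^{k + \sum_{j \neq e}(r_j + c_j + 2 r_j m_j)}$ sign stated in (\ref{rec:formula}).
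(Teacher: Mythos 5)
Your argument is correct and is exactly the paper's intended proof: the paper derives Corollary~\ref{cor:recursion2} from Theorem~\ref{thm:main2} verbatim as Corollary~\ref{cor:recursion} is derived from Theorem~\ref{thm:main}, namely by taking $\gamma\in N_0(\yY)$ corresponding to $(c[l],n)$ so that $\chi(P(\yY,\gamma))=0$, isolating the unique $k=1$, $r=0$ term, and checking via (\ref{star2}) that all other terms have $0\le c'<c$. Your sign bookkeeping and the positivity argument $\tfrac{3}{2}r^2+r>0$ for $r\neq 0$ together with $c_j+r_j/2>0$ match the paper's reasoning.
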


Finally we have an analogue of Theorem~\ref{thm:const}:
\begin{thm}\label{thm:const2}
For any $u\in \mathbb{Q}[D] \oplus 
H^{\ge 4}(X, \mathbb{Q})$, 
we have the following formula: 
\begin{align*}
&\sum_{\begin{subarray}{c} (n, \beta) \in \\
\mathbb{Z} \oplus H_2(X, \mathbb{Z}) \end{subarray}}
\left(\sum_{\begin{subarray}{c}
k\ge 1 \\
1\le e\le k
\end{subarray}} 
\sum_{\begin{subarray}{c}
(r_j, c_j, 2m_j)\in \mathbb{Z}^3, \
1\le j\le k, \ j\neq e, \ r \in \mathbb{Z} \\
2c_j + r_j>0, \ c_j^2 \ge 2r_j m_j, 
\text{ \rm{satisfying} } (\ref{star3})
\end{subarray}}
(-1)^{k-1+\sum_{j \neq e}r_j + c_j + 2r_j m_j} \right. \\
&\left. \hspace{20mm}
 f(\{(r_j, c_j, m_j)\}_{j\neq e}, r, \beta)  \cdot
 \prod_{j\neq e} \DT(r_j, c_j, m_j) \right) \chi(P_n(X, \beta)) \\
=&
\sum_{\begin{subarray}{c}
(n, \beta) \in \\
\mathbb{Z} \oplus H_2(X, \mathbb{Z}) 
\end{subarray}}
\left(\sum_{\begin{subarray}{c}
k\ge 1 \\
1\le e\le k
\end{subarray}} 
\sum_{\begin{subarray}{c}
(r_j, c_j, 2m_j)\in \mathbb{Z}^3, \
1\le j\le k, \ j\neq e, \ r \in \mathbb{Z} \\
2c_j + r_j>0, \ c_j^2 \ge 2r_j m_j, 
\text{ \rm{satisfying} } (\ref{star4})
\end{subarray}}
(-1)^{k-1+\sum_{j \neq e}r_j + c_j + 2r_j m_j} \right.
\\
&\left. \hspace{20mm}
g(\{(r_j, c_j, m_j)\}_{j\neq e}, r, \beta)  \cdot
 \prod_{j\neq e} \DT(r_j, c_j, m_j) \right) \chi(P_n(X, \beta)). 
\end{align*}
\end{thm}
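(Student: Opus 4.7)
The proof plan is to repeat the argument of Theorem~\ref{thm:const}, but replacing the Joyce--Song wall-crossing formula of \cite[Theorem~5.18]{JS} with its Euler characteristic analogue \cite[Theorem~6.28]{Joy4}. The key point is that Joyce's Euler-characteristic formula only requires a Behrend-function-free input (the underlying motivic/Euler characteristic identities of Ringel--Hall algebras), so Conjecture~\ref{conj:critical} is not needed. Everything else (the construction of the one parameter family $\sigma_t^{\lL}$, the description of $\sigma_t^{\lL}$-semistable objects of rank one at both ends, the structure of walls and the combinatorial coefficients $U(\{(r_j,c_j,m_j)\}_{j\neq e})$ and the associated $f$, $g$) has already been established in the excerpt without reference to Conjecture~\ref{conj:critical}.

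First, I would apply the Euler characteristic wall-crossing formula of \cite[Theorem~6.28]{Joy4} to the one parameter family $\sigma_t^{\lL}$ on $\dD_{X/Y}^{\lL}$ as $t$ varies over $\mathbb{R}_{>0}$. By Proposition~\ref{prop:L}(ii), the rank one $\sigma_t^{\lL}$-stable objects for $t \gg 0$ are precisely the complexes $\oO_X(rD) \otimes \lL \otimes (\oO_X\to F)$; whose naive moduli space is isomorphic to $P_n(X,\beta)$ via the argument of Proposition~\ref{prop:DTP}. Hence the $t \gg 0$ side of the wall-crossing produces $\chi(P_n(X,\beta))$, weighted by the combinatorial coefficient $f(\{(r_j,c_j,m_j)\}_{j\neq e}, r, \beta)$ coming from Lemma~\ref{lem:use}, with Euler pairings computed exactly as in the proof of Theorem~\ref{thm:const}, and with each $\DT(r_j,c_j,m_j)$ replaced by its Euler-characteristic counterpart. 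By \cite[Lemma~2.8]{TodS3} (noted in the remark after Theorem~\ref{thm:main2}), this replacement introduces a factor of $(-1)^{r_j + c_j + 2r_jm_j+1}$, which accounts exactly for the sign $(-1)^{k-1+\sum_{j\neq e}r_j+c_j+2r_jm_j}$ appearing in the statement.

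Symmetrically, by Proposition~\ref{prop:L}(iii), the rank one $\sigma_t^{\lL}$-stable objects for $0<t\ll 1$ are the complexes $\Phi(\lL^{\dag}\otimes(\oO_{\yY}\to F))$, whose moduli space is isomorphic to $P(\yY,\gamma)$ for suitable $\gamma$; the twist by $\lL$ alters the Chern character computation and gives the shifted condition (\ref{star4}) in place of (\ref{star}). The corresponding wall-crossing expresses $\chi(P(\yY,\gamma))$ as the second sum in the statement, with the combinatorial coefficient $g(\{(r_j,c_j,m_j)\}_{j\neq e}, r, \beta)$ (in place of $f$) arising from the different Euler pairings $\chi(v_a,v_e)=m_a+\tfrac{1}{2}c_a+3rc_a-r_aD\beta+\tfrac{9}{2}rr_a+\tfrac{9}{2}r^2r_a$, exactly as computed in the proof of Theorem~\ref{thm:const}. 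Setting $u = \ch(\Phi_{\ast}\gamma)$ and equating the two expressions for $\chi(P(\yY,\gamma))$ yields the desired identity. Since the indexing conditions (\ref{star3}) and (\ref{star4}) are parametrized by $u$ ranging over $\mathbb{Q}[D]\oplus H^{\ge 4}(X,\mathbb{Q})$, varying $\gamma$ (equivalently $u$) produces the asserted equation for every such $u$.

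The only real subtlety, rather than obstacle, is to verify that \cite[Theorem~6.28]{Joy4}, proved originally for abelian categories of coherent sheaves, applies to our heart $\aA_{X/Y}^{\lL} \subset \dD_{X/Y}^{\lL}$ in the derived category. This is handled by the same device used throughout the excerpt: the relevant Ringel--Hall-algebra identities and the existence of the Euler-characteristic integration map $\Pi$ require only that $\aA_{X/Y}^{\lL}$ is the heart of a bounded t-structure with well-behaved moduli stacks of finite type (Proposition~\ref{prop:fund}), plus the finiteness of walls on $\mathbb{R}_{>0}$ (Proposition~\ref{prop:fund}(ii)), both already established. No Behrend function identities are needed, which is precisely why Conjecture~\ref{conj:critical} can be dropped. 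With these inputs, the Euler-characteristic wall-crossing proceeds formally, and the computation of the combinatorial coefficients is identical to that of Theorem~\ref{thm:main} and Theorem~\ref{thm:const}.
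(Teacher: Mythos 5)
Your proposal is correct and follows essentially the same route as the paper, which simply replaces the Joyce--Song formula \cite[Theorem~5.18]{JS} in the proof of Theorem~\ref{thm:const} by Joyce's Euler-characteristic wall-crossing formula \cite[Theorem~6.28]{Joy4} and observes that Conjecture~\ref{conj:critical} is then superfluous. Your accounting of the sign $(-1)^{k-1+\sum_{j\neq e}(r_j+c_j+2r_jm_j)}$ as the product of the conversion factors $(-1)^{r_j+c_j+2r_jm_j+1}$ over the $k-1$ indices $j\neq e$ (together with the disappearance of the $(-1)^{\chi(v_a,v_b)}$ twists from the Euler-characteristic formula) is exactly the intended bookkeeping.
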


\providecommand{\bysame}{\leavevmode\hbox to3em{\hrulefill}\thinspace}
\providecommand{\MR}{\relax\ifhmode\unskip\space\fi MR }
\providecommand{\MRhref}[2]{%
  \href{http://www.ams.org/mathscinet-getitem?mr=#1}{#2}
}
\providecommand{\href}[2]{#2}

Kavli Institute for the Physics and 
Mathematics of the Universe, University of Tokyo,
5-1-5 Kashiwanoha, Kashiwa, 277-8583, Japan.

\textit{E-mail address}: yukinobu.toda@ipmu.jp

\end{document}